\newtheorem{theorem}{Theorem}[section]
\newtheorem{lemma}[theorem]{Lemma}
\newtheorem{example}[theorem]{Example}
\newtheorem{proposition}[theorem]{Proposition}
\newtheorem{remark}[theorem]{Remark}
\title{The structure of \'etale Boolean right restriction monoids}
\author{Mark V. Lawson}
\address{Mark V. Lawson, Department of Mathematics,
Maxwell Institute for Mathematical Sciences,
Heriot-Watt University,
Riccarton,
Edinburgh EH14 4AS,
UNITED KINGDOM}
\email{m.v.lawson@hw.ac.uk}
\begin{document}

\begin{abstract}
In this paper, we describe \'etale Boolean right restriction monoids in terms of Boolean inverse monoids. 
We show that the Thompson groups arise naturally in this context.
\end{abstract}
\maketitle

\section{Introduction}

The goal of this paper is to describe the structure of \'etale Boolean right restriction monoids in terms of Boolean inverse monoids motivated by \cite{CG, G1, G2}.
Etale Boolean right restriction monoids are interesting
because these are precisely the Boolean right restriction monoids 
whose associated categories, under non-commutative Stone duality,
are in fact groupoids  \cite[Theorem 5.2]{G2}.

Our starting point is the definition of the set of partial units\footnote{We prefer this term to that of `partial isomorphism' used by Cockett and Garner \cite{CG}.} 
of a Boolean right restriction monoid $S$. 
They form a Boolean inverse monoid $\mathsf{Inv}(S)$ \cite{Wehrung};
this was proved in \cite{CG, G2}, although we also give a full proof of this result in Lemma~\ref{lem:sleet}. 
A Boolean right restriction monoid $S$ is said to be \'etale if every element is a join of a finite number of partial units.
Etale Boolean right restriction monoids were first defined in \cite{G2}
though a special class of such monoids was actually used in \cite{Lawson2021b}.
A more general notion of `\'etale' was defined in \cite{CG}.

From the definition, we can see that there is a close connection
between \'etale Boolean right restriction monoids and Boolean inverse monoids.
This is made precise in Section 5.
In Theorem~\ref{them:one}, 
we show that the Boolean inverse monoid of partial units of an \'etale Boolean right restriction monoid determines the structure of that monoid.
Our main theorem,  Theorem~\ref{them:etale-main}, 
shows how to manufacture an \'etale Boolean right restriction monoid $T$ from a Boolean inverse monoid $S$
using tools developed in Section 4.
We call $T$ constructed in this way, the `companion' of $S$.
Section 6 provides some concrete examples of the theory we have developed including a discussion of the classical Thompson groups $G_{n,1}$.

The rest of this introduction is given over to outlining some of the background needed to read this paper.

On every right restriction monoid is defined a partial order called the natural partial order,
which plays an important role in determining the structure of that monoid.
For this reason, we shall need some definitions and notation from the theory of posets.
Let $(X,\leq)$ be a poset.
If $Y \subseteq X$ define 
$$Y^{\downarrow} = \{x \in X \colon \exists y \in Y;  x \leq y\}.$$
In the case where $Y = \{y\}$, we write  $y^{\downarrow}$ 
instead of $\{y\}^{\downarrow}$. 
If $Y = Y^{\downarrow}$ we say that $Y$ is an {\em order ideal}.
If $P$ and $Q$ are posets, a function $\theta \colon P \rightarrow Q$ is said to be {\em order preserving}
if $p_{2} \leq p_{1}$ implies that $\theta (p_{2}) \leq \theta (p_{1})$.
An {\em order isomorphism} is a bijective order preserving map
whose inverse is also order preserving.

The {\em usual order} on the set of idempotents of any semigroup is defined by $e \leq f$ if $e = ef = fe$.

We shall use some basic topology in this paper \cite{Willard}.
Let $X$ be a set.
A set $\beta = \{ U_{i} \colon i \in I \}$
of subsets of $X$ is called a {\em basis}
if it satisfies two conditions:
the first is that $X = \bigcup_{U \in \beta} U$
and the second is that if $x \in U_{1} \cap U_{2}$, where $U_{1},U_{2} \in \beta$
then there exists $U \in \beta$ such that $x \in U \subseteq U_{1} \cap U_{2}$.
Bases are used to {\em generate} topologies on $X$.
A space $X$ is said to be {\em $0$-dimensional} if it has a basis consisting
of clopen sets.
A compact, Hausdorff, $0$-dimensional space is said to be {\em Boolean}.
It is important to distinguish {\em partial homeomorphisms} and  {\em local homeomorphisms}.
By a partial homeomorphism we mean a homeomorphism between two open subsets of a topological space.
A local homeomorphism is a continuous function $f \colon X \rightarrow Y$
such that for each $x \in X$ there is an open set $U_{x} \subseteq X$, containing $x$,
such that $f$ restricted to $U_{x}$ is a homeomorphism.

On a point of notation, in an inverse semigroup the idempotent $a^{-1}a$ is usually denoted by $\mathbf{d}(a)$
and $aa^{-1}$ is usually denoted by $\mathbf{r}(a)$.

If $X$ is a topological space then $\mathsf{I}(X)$ denotes the inverse monoid
of all partial homeomorphisms of $X$. In the case where $X$ is equipped with the discrete topology
we get back the symmetric inverse monoid on $X$.
If $X$ is a Boolean space then $\mathsf{I}^{\tiny cp}(X)$ denotes the Boolean inverse monoid
of all homeomorphisms between the clopen subsets of $X$.
This is the prototype for Boolean inverse monoids.\\

\noindent
{\bf Acknowledgements } 
I would like to thank many people for commenting on earlier versions of this paper or for general discussions on the topics of this paper,
amongst whom are:
Tristan Bice, 
Richard Garner,
Victoria Gould.
and 
Ganna Kudryavtseva.
I would also like to thank the anonymous referee for meticulously reading the submitted version of this paper
and for the many helpful comments provided.\\

\section{Right restriction semigroups}

Semigroups generalizing inverse semigroups have been studied by a number of authors at various times, work nicely summarized in \cite{Hollings}.
In addition, category theorists also became interested in categorical analogues of such semigroups,
motivated by a desire to axiomatize categories of partial functions,
notably in the work of Grandis \cite{Grandis} and Cockett (and his collaborators) \cite{Cockett}.
We shall focus on monoids in this paper.
The following well-known example is key and serves to motivate the class of semigroups we shall study in this paper.

\begin{example}\label{ex:first}
{\em Functions will (almost) always be computed from right to left.
The only exception is in Section~6.2, but this will be flagged up when it occurs.
Denote by  $\mathsf{PT}(X)$ the set of all partial functions defined on the (non-empty) set $X$.
See \cite{Gould}.
An element of  $\mathsf{PT}(X)$ has the form $f \colon A \rightarrow X$ where $A \subseteq X$.  
We call the subset $A$ the {\em domain of definition} of $f$;
this set will be denoted by $\mbox{\rm dom}(f)$. 
Denote by $f^{\ast}$ (called {\em $f$-star}) the identity function defined on $\mbox{\rm dom}(f)$. 
Observe that $ff^{\ast} = f$.
Whereas identity functions defined on subsets of $X$ are idempotents,
it is not true that all idempotents have this form.
The set of all those idempotents which are identities defined on subsets is denoted by $\mathsf{Proj}(\mathsf{PT}(X))$
and is called the set of {\em projections}.
Partial functions $f$ and $g$ can be compared using subset inclusion.
In fact, $f \subseteq g$ precisely when $f = gf^{\ast}$.
With respect to this order, the set of projections forms a Boolean algebra.
If $f,g \in \mathsf{PT}(X)$ then it is not true in general that $f \cup g \in \mathsf{PT}(X)$, since $f \cup g$ 
may not be a partial function.
It is a partial function precisely when $fg^{\ast} = gf^{\ast}$;
in this case, we say that $f$ and $g$ are {\em left-compatible}.
Observe also that $f^{\ast}g = g(fg)^{\ast}$ which expresses the fact that we are dealing with partial {\em functions}.
We may regard $\mathsf{PT}(X)$ as an algebra of type $(1,2)$ equipped with the star operation and the semigroup binary operation.
}
\end{example}
 
Example~\ref{ex:first} is a special case of the following definition.
We define a semigroup $S$ to be a {\em right restriction semigroup} if
it is equipped with a unary operation $a \mapsto a^{\ast}$ satisfying the following axioms:
\begin{itemize}
\item[{\rm (RR1).}] $(s^{\ast})^{\ast} = s^{\ast}$. 
\item[{\rm (RR2).}] $(s^{\ast}t^{\ast})^{\ast} = s^{\ast}t^{\ast}$.
\item[{\rm (RR3).}] $s^{\ast}t^{\ast} = t^{\ast}s^{\ast}$.
\item[{\rm (RR4).}] $ss^{\ast} = s$.
\item[{\rm (RR5).}] $(st)^{\ast} = (s^{\ast}t)^{\ast}$.
\item[{\rm (RR6).}] $t^{\ast}s = s(ts)^{\ast}$.
\end{itemize}
The unary operation $s \mapsto s^{\ast}$ is called {\em star}.
Denote by $\mathsf{Proj}(S)$ those elements $a$ such that $a^{\ast} = a$, called {\em projections}.

Let $S$ and $T$ be right restriction semigroups.
A {\em homomorphism} $\theta \colon S \rightarrow T$ of right restriction semigroups
is a semigroup homomorphism such that $\theta (a^{\ast}) = \theta (a)^{\ast}$.
Such homomorphisms map projections to projections.

The lemma below is well-known but is included for context.
The proofs follow quickly from the axioms.

\begin{lemma}\label{lem:one} Let $S$ be a right restriction semigroup.
\begin{enumerate}
\item Each projection is an idempotent.
\item $ae = a$ implies that $a^{\ast} \leq e$ whenever $e$ is a projection.
\item If $S$ is a monoid then $1^{\ast} = 1$.
\item $(ab)^{\ast} \leq b^{\ast}$ for all elements $a,b \in S$.
\item If $S$ has a zero which is a projection, then $a = 0$ if and only if $a^{\ast} = 0$.
\item The product of projections is a projection.
\end{enumerate}
\end{lemma}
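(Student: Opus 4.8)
The plan is to dispatch each item directly from the axioms, handling them in an order that lets the easy identities feed the two order-theoretic claims. I would begin with the three statements that are immediate. For (1), applying (RR4) with $s = a$ to a projection $a$ (so that $a^{\ast} = a$) gives $a^{2} = aa^{\ast} = a$. For (3), taking $s = 1$ in (RR4) yields $1 \cdot 1^{\ast} = 1$, and since $1$ is the identity the left-hand side is just $1^{\ast}$, so $1^{\ast} = 1$. For (6), if $a$ and $b$ are projections then $ab = a^{\ast}b^{\ast}$, and (RR2) says $(a^{\ast}b^{\ast})^{\ast} = a^{\ast}b^{\ast}$, whence $ab$ is fixed by star and is therefore a projection.

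The heart of the lemma is (2), from which (4) will then follow cheaply. Here I would first record that the usual order restricted to projections is symmetric: since projections are idempotent by (1) and commute by (RR3), the condition $e \leq f$ (that is, $e = ef = fe$) is equivalent to the single equation $e = ef$. To prove (2), I would start from $ae = a$ with $e$ a projection, apply star to obtain $a^{\ast} = (ae)^{\ast}$, rewrite the right-hand side using (RR5) as $(a^{\ast}e)^{\ast}$, and then use $e = e^{\ast}$ together with (RR2) to collapse this expression to $a^{\ast}e$. This gives $a^{\ast} = a^{\ast}e$, which is exactly $a^{\ast} \leq e$.

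With (2) established, (4) becomes a one-line consequence: for any $a,b$ the identity $bb^{\ast} = b$ from (RR4) gives $(ab)b^{\ast} = a(bb^{\ast}) = ab$, so applying part (2) with the element $ab$ and the projection $b^{\ast}$ yields $(ab)^{\ast} \leq b^{\ast}$. Finally, (5) is a two-way check using that $0$ is a projection: if $a = 0$ then $a^{\ast} = 0^{\ast} = 0$, while if $a^{\ast} = 0$ then (RR4) gives $a = aa^{\ast} = a \cdot 0 = 0$.

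I do not expect any genuine obstacle here, as advertised by the author. The only point requiring a moment's care is recognizing in (2) that one need verify only $a^{\ast} = a^{\ast}e$ rather than both one-sided equations, which is justified by the commutativity of projections from (RR3); and the slickness of (4) depends entirely on first isolating (2) and spotting the reduction $(ab)b^{\ast} = ab$.
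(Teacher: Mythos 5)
Your proof is correct, and since the paper omits the argument with the remark that "the proofs follow quickly from the axioms," your direct verification from (RR1)--(RR6) is exactly the intended route. The one point of care you flag — that $a^{\ast} = a^{\ast}e$ suffices for $a^{\ast}\leq e$ in the usual order because projections commute by (RR3) — is handled correctly.
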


Let $S$ be a right restriction monoid.
The monoid of {\em total elements} of $S$, denoted by $\mathsf{Tot}(S)$,
is the monoid of all elements $a$ of $S$ such that $a^{\ast} = 1$.

\begin{remark}\label{rem:hopkins}{\em There is a Cayley-type representation theorem which says that
given any right restriction semigroup $S$ there is an embedding of right restriction semigroups
into $\mathsf{PT}(S)$.
Define $\phi \colon S \rightarrow \mathsf{PT}(S)$
where $\phi(a)$ is the partial function with domain of definition $a^{\ast}S$
such that $\phi(a)(x) = ax$.
This was first proved in \cite{Trokh}.\footnote{My thanks to Victoria Gould for supplying this reference.}
}
\end{remark} 

In a right restriction semigroup,
define a binary relation $a \leq b$ on $S$ by $a = ba^{\ast}$.
The following are useful.
Again, these results are well-known and are included for context.
The proofs are easy.

\begin{lemma}\label{lem:two}  Let $S$ be a right restriction semigroup.
\begin{enumerate}
\item If $a = be$, where $e$ is a projection, then $a \leq b$.
\item If $a = eb$, where $e$ is a projection, then $a \leq b$.
\item If $a \leq b$ then $a^{\ast} \leq b^{\ast}$.
\item The relation $\leq$ is a partial order. 
\item The semigroup $S$ is partially ordered with respect to $\leq$.
\item The set of projections forms an order ideal.
\end{enumerate}
\end{lemma}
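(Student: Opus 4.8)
The plan is to prove the six items essentially in the stated order, treating (1) and (2) as the engine that drives everything else. For (1), write $a = be$ with $e$ a projection. I would first identify $a^{\ast}$: by (RR5), $a^{\ast} = (be)^{\ast} = (b^{\ast}e)^{\ast}$, and since $b^{\ast}e$ is a product of projections it is itself a projection by Lemma~\ref{lem:one}(6), so $a^{\ast} = b^{\ast}e$. Substituting back and using (RR4) gives $ba^{\ast} = b(b^{\ast}e) = (bb^{\ast})e = be = a$, which is exactly $a \leq b$. For (2), write $a = eb$; since $e = e^{\ast}$, applying (RR6) with $t = e$ and $s = b$ yields $eb = e^{\ast}b = b(eb)^{\ast} = ba^{\ast}$, so again $a \leq b$. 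The upshot of (1) and (2) is that postmultiplying or premultiplying any element by a projection moves it down in the order, and this is what makes the remaining parts nearly automatic.

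For (3), the computation in (1) shows that whenever $a = ba^{\ast}$ we have $a^{\ast} = (ba^{\ast})^{\ast} = (b^{\ast}a^{\ast})^{\ast} = b^{\ast}a^{\ast}$; combined with (RR3) this gives $a^{\ast} = a^{\ast}b^{\ast} = b^{\ast}a^{\ast}$, which is precisely $a^{\ast} \leq b^{\ast}$. I would record the identity $a^{\ast} = b^{\ast}a^{\ast}$, as it is reused repeatedly below.

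Part (4) then consists of three routine checks. Reflexivity is just (RR4). For antisymmetry, from $a \leq b$ and $b \leq a$, part (3) gives $a^{\ast} = b^{\ast}a^{\ast}$ and $b^{\ast} = a^{\ast}b^{\ast}$; by (RR3) the right-hand sides coincide, so $a^{\ast} = b^{\ast}$ and hence $a = ba^{\ast} = bb^{\ast} = b$ by (RR4). For transitivity, from $a = ba^{\ast}$ and $b = cb^{\ast}$ I would substitute to obtain $a = c(b^{\ast}a^{\ast})$ and collapse $b^{\ast}a^{\ast} = a^{\ast}$ using the identity from (3), leaving $a = ca^{\ast}$. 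Part (5) is where (1) and (2) pay off: if $a \leq b$, then $ca = (cb)a^{\ast} \leq cb$ by (1), while rewriting $a^{\ast}c = c(ac)^{\ast}$ via (RR6) gives $ac = b\,a^{\ast}c = (bc)(ac)^{\ast} \leq bc$, again by (1); combining the two one-sided monotonicities with transitivity turns $a \leq b$ and $c \leq d$ into $ac \leq bd$. Finally, (6) is immediate: if $a \leq e$ with $e$ a projection then $a = ea^{\ast}$ is a product of projections, hence a projection by Lemma~\ref{lem:one}(6).

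Since the author already signals that these proofs are easy, I do not expect a genuine obstacle. The one step that is not purely mechanical is the right-hand monotonicity in (5), which hinges on the rewriting $a^{\ast}c = c(ac)^{\ast}$ supplied by (RR6) --- the axiom encoding the fact that we are modelling partial \emph{functions} --- and choosing it (rather than attempting a direct manipulation) is the only place where a little foresight is needed. I would also be careful to carry out (1) and (2) before everything else, since each later part silently depends on them.
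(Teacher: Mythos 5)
Your proof is correct: every step checks against the axioms (RR1)--(RR6) and Lemma~\ref{lem:one}(6), including the only delicate point, the use of (RR6) to get right-monotonicity in part (5). The paper itself omits the proof, declaring these facts well-known and easy, and your argument is exactly the standard derivation the author is alluding to, so there is nothing to compare beyond noting that you have filled the gap correctly.
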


We call $\leq$ the {\em natural partial order}.
This will be the only partial order we consider on a right restriction semigroup.
Observe that the natural partial order, when restricted to the projections, is the usual order on idempotents.
The following results are well-known and easy to prove.

\begin{lemma}\label{lem:spiked} Let $S$ be a right restriction semigroup. 
\begin{enumerate}

\item If $a,b \leq c$ and $a^{\ast} = b^{\ast}$ then $a = b$.

\item If $a,b \leq c$ then $ab^{\ast} = ba^{\ast}$.

\end{enumerate}
\end{lemma}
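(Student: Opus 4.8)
The plan is to unfold the definition of the natural partial order in both parts and then, for the second part, to invoke the commutativity of projections. Both claims reduce to a one-line substitution, so I would present them quickly.

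For part (1), I would begin by recording what the hypotheses $a \leq c$ and $b \leq c$ mean: by definition of $\leq$ we have $a = ca^{\ast}$ and $b = cb^{\ast}$. The extra hypothesis $a^{\ast} = b^{\ast}$ then lets me substitute, giving $a = ca^{\ast} = cb^{\ast} = b$, which is exactly the desired conclusion. No axiom beyond the definition of the order is needed here.

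For part (2), I would again start from $a = ca^{\ast}$ and $b = cb^{\ast}$. Multiplying on the right by the relevant projection and using associativity gives $ab^{\ast} = ca^{\ast}b^{\ast}$ and $ba^{\ast} = cb^{\ast}a^{\ast}$. To match the two right-hand sides I would note that $a^{\ast}$ and $b^{\ast}$ are projections (this is exactly (RR1)), so by (RR3) they commute: $a^{\ast}b^{\ast} = b^{\ast}a^{\ast}$. Substituting this equality yields $ab^{\ast} = ca^{\ast}b^{\ast} = cb^{\ast}a^{\ast} = ba^{\ast}$, as required.

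The only point that needs a moment's care is the implicit appeal to (RR1) in the second part, which guarantees that $a^{\ast}$ and $b^{\ast}$ are genuine projections and hence that (RR3) applies to them; apart from this there is no real obstacle, and the two statements follow directly from the definition of $\leq$ together with the commutativity of projections.
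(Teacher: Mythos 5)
Your proof is correct and is exactly the straightforward argument the paper has in mind (the paper omits the proof of this lemma as ``well-known and easy to prove''): both parts follow by unfolding $a = ca^{\ast}$, $b = cb^{\ast}$ and, for part (2), commuting the projections via (RR3). The only minor quibble is that the appeal to (RR1) is unnecessary, since $a^{\ast}$ and $b^{\ast}$ are already literally of the form $s^{\ast}$ and so (RR3) applies to them directly.
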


Part (2) of the above lemma motivates the following definition.
Define $a \sim_{l} b$, and say that $a$ and $b$ are {\em left-compatible},  if  $ab^{\ast} = ba^{\ast}$.
We say that projections $e$ and $f$ are {\em orthogonal}, written $e \perp f$,
if $ef = 0$.
We say that $a$ and $b$ are {\em left-orthogonal} if $a^{\ast} \perp b^{\ast}$.
Clearly, left-orthogonal elements are left-compatible.

\begin{remark}
{\em Homomorphisms of right restriction semigroups
preserve the natural partial order and left-compatibility.}
\end{remark}

The following is included for the sake of completeness.

\begin{lemma}\label{lem:yakult}
In an inverse semigroup, 
we have that $a \sim_{l} b$ if and only if $ab^{-1}$ is an idempotent.
\end{lemma}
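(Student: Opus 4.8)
The plan is to unfold the definition of $\sim_{l}$ in the inverse case and then play off two elementary facts about idempotents in inverse semigroups. Recall that an inverse semigroup is a right restriction semigroup in which $a^{\ast} = \mathbf{d}(a) = a^{-1}a$; indeed $aa^{\ast} = aa^{-1}a = a$, so (RR4) holds, and the other axioms are routine. Hence, for an inverse semigroup, the condition $a \sim_{l} b$, namely $ab^{\ast} = ba^{\ast}$, reads simply as $ab^{-1}b = ba^{-1}a$. The two facts I shall lean on are: (i) every idempotent $x$ satisfies $x = x^{-1}$; and (ii) for any $x$ the elements $xx^{-1}$ and $x^{-1}x$ are idempotents, and all idempotents commute. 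Note also that $(ab^{-1})^{-1} = ba^{-1}$.

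For the forward direction, suppose $ab^{-1}b = ba^{-1}a$. I would compute the idempotent $ab^{-1}(ab^{-1})^{-1}$ and show it equals $ba^{-1}$. Explicitly, $ab^{-1}(ab^{-1})^{-1} = ab^{-1}ba^{-1} = (ab^{-1}b)a^{-1}$, and substituting the hypothesis $ab^{-1}b = ba^{-1}a$ turns this into $(ba^{-1}a)a^{-1} = ba^{-1}$, using $a^{-1}aa^{-1} = a^{-1}$. By fact (ii) the left-hand side is an idempotent, so $ba^{-1}$ is idempotent; by fact (i) its inverse $ab^{-1}$ is then idempotent as well.

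For the converse, suppose $ab^{-1}$ is idempotent. By fact (i), $ab^{-1} = (ab^{-1})^{-1} = ba^{-1}$, which I will use to rewrite the second factor below. Expanding idempotency, $ab^{-1} = (ab^{-1})^{2} = ab^{-1}\cdot ba^{-1} = a\mathbf{d}(b)a^{-1}$. Right-multiplying by $a$ gives $a\mathbf{d}(b)a^{-1}a = ab^{-1}a$; on the left, $a^{-1}a = \mathbf{d}(a)$, and since idempotents commute and $a\mathbf{d}(a) = a$, the left side collapses to $a\mathbf{d}(b) = ab^{-1}b$. Thus $ab^{-1}b = ab^{-1}a = ba^{-1}a$, the last equality again by $ab^{-1} = ba^{-1}$, which is exactly $a \sim_{l} b$.

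The calculations are short, so there is no serious obstacle; the only point needing care is bookkeeping with the two equal forms $ab^{-1}$ and $ba^{-1}$ of the idempotent, choosing whichever makes the middle factor $b^{-1}b$ or $a^{-1}a$ appear so that it can be absorbed. The single genuinely non-mechanical move is the collapse in the converse, where one must recognise that right-multiplying by $a$ and commuting idempotents reduces $a\mathbf{d}(b)a^{-1}a$ to $a\mathbf{d}(b)$.
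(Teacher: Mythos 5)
Your proof is correct. The forward direction is essentially the paper's argument in mirror image: the paper multiplies $ab^{-1}b=ba^{-1}a$ on the right by $b^{-1}$ to get $ab^{-1}=ba^{-1}ab^{-1}=b(a^{-1}a)b^{-1}$, an idempotent because it is a conjugate of one, whereas you multiply by $a^{-1}$ to exhibit $ba^{-1}$ as $(ab^{-1})(ab^{-1})^{-1}$ and then pass to inverses; both are one-line computations with commuting idempotents. The converse is where you genuinely diverge. The paper argues order-theoretically: from $ab^{-1}$ idempotent it deduces $ab^{-1}b,\,ba^{-1}a\leq a,b$, checks that these two elements have the same star, and invokes Lemma~\ref{lem:spiked}(1) (two elements below a common upper bound with equal stars coincide). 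You instead expand $(ab^{-1})^{2}=ab^{-1}$ directly, using the self-inversion of idempotents to rewrite one factor as $ba^{-1}$, and collapse $a\mathbf{d}(b)\mathbf{d}(a)$ to $a\mathbf{d}(b)$ by commuting idempotents. Your route is more elementary in that it needs nothing beyond the basic idempotent calculus of inverse semigroups, while the paper's route is shorter on the page and reuses machinery (Lemma~\ref{lem:spiked}) that is already in place for right restriction semigroups generally; since the lemma is stated only for inverse semigroups, nothing is lost either way.
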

\begin{proof} Suppose that  $a \sim_{l} b$.
Then $ab^{-1}b = ba^{-1}a$ by definition.
But then $ab^{-1}= ba^{-1}ab^{-1}$, which is an idempotent.
Conversely, suppose that $ab^{-1}$ is an idempotent.
Then $ab^{-1}b, ba^{-1}a \leq a,b$.
But $(ab^{-1}b)^{\ast} = (ba^{-1}a)^{\ast}$.
Thus $ab^{-1}b = ba^{-1}a$ by Lemma~\ref{lem:spiked}.
\end{proof}

In an inverse semigroup, we define {\em right-compatibility}
by $a \sim_{r} b $ if and only if $bb^{-1}a = aa^{-1}b$;
this is equivalent to $a^{-1}b$ being an idempotent by the dual
of Lemma~\ref{lem:yakult}.
In an inverse semigroup, we say that $a$ and $b$ are {\em compatible} if 
they are both left-compatible and right-compatible.
The proof of the following is immediate

\begin{lemma}\label{lem:yoghurt} In an inverse semigroup,
we have that $a \sim_{l} b$ if and only if $a^{-1} \sim_{r} b^{-1}$, 
and  $a \sim_{r} b$ if and only if $a^{-1} \sim_{l} b^{-1}$, 
\end{lemma}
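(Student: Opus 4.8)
The plan is to reduce each of the two biconditionals to the idempotent characterisations of left- and right-compatibility that are already in hand, after which the statement becomes a one-line bookkeeping exercise with the involution $x \mapsto x^{-1}$. I recall that in an inverse semigroup $a^{\ast} = a^{-1}a$, that $a \sim_{l} b$ holds precisely when $ab^{-1}$ is idempotent (Lemma~\ref{lem:yakult}), and that $a \sim_{r} b$ holds precisely when $a^{-1}b$ is idempotent (the dual of Lemma~\ref{lem:yakult}, as recorded just above the statement).

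For the first equivalence I would read off $a \sim_{l} b$ as the assertion that $ab^{-1}$ is idempotent. Applying the right-compatibility criterion to the pair $a^{-1}, b^{-1}$ gives that $a^{-1} \sim_{r} b^{-1}$ is the assertion that $(a^{-1})^{-1}b^{-1}$ is idempotent. Since $(a^{-1})^{-1} = a$, this is the condition that $ab^{-1}$ is idempotent, which is literally the same statement, and the first equivalence follows. The second equivalence is handled symmetrically: $a \sim_{r} b$ says $a^{-1}b$ is idempotent, while $a^{-1} \sim_{l} b^{-1}$ says $a^{-1}(b^{-1})^{-1} = a^{-1}b$ is idempotent, and again these coincide.

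There is no genuine obstacle here; the only point requiring care is the correct application of the two characterisations together with the cancellation $(x^{-1})^{-1} = x$. If one preferred to avoid Lemma~\ref{lem:yakult} and argue straight from the definitions, the same result drops out by a direct computation: writing $a \sim_{l} b$ as $a\,b^{-1}b = b\,a^{-1}a$ and writing $a^{-1} \sim_{r} b^{-1}$ (via the defining identity $yy^{-1}x = xx^{-1}y$ with $x = a^{-1}$, $y = b^{-1}$) as $b^{-1}b\,a^{-1} = a^{-1}a\,b^{-1}$, one checks that each equation is obtained from the other by applying the anti-automorphism $x \mapsto x^{-1}$ termwise and reversing the order, using $(x^{-1})^{-1} = x$. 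As this involution is a bijection, the two equations are equivalent, giving the first biconditional; the second then follows identically with the roles of $\sim_{l}$ and $\sim_{r}$ interchanged.
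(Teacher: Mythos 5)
Your proof is correct and is exactly the argument the paper has in mind: the paper declares the lemma ``immediate,'' and your reduction via Lemma~\ref{lem:yakult} and its dual, using $(x^{-1})^{-1}=x$, is the intended one-line verification (your alternative direct computation via the inverse anti-automorphism is also sound). Nothing further is needed.
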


The lemma below describes some of the important properties of the left-compatibity relation.

\begin{lemma}\label{lem:similarity} 
Let $S$ be a right restriction semigroup in which $a \sim_{l} b$.
\begin{enumerate}
\item If $a \sim_{l} b$ and $c \sim_{l} d$ then $ac \sim_{l} bd$. 
\item If $a \sim_{l} b$ and $x \leq a$ and $y \leq b$ then $x \sim_{l} y$. 
\end{enumerate}
\end{lemma}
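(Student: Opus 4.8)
The plan is to prove both parts by direct computation from the axioms, translating the defining equation $u \sim_{l} v \Leftrightarrow uv^{\ast} = vu^{\ast}$ into manipulations of products of projections.

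For part (2), I would start from the definitions $x = ax^{\ast}$ and $y = by^{\ast}$, and use Lemma~\ref{lem:two}(3) to get $x^{\ast} \leq a^{\ast}$ and $y^{\ast} \leq b^{\ast}$, equivalently $x^{\ast} = a^{\ast}x^{\ast}$ and $y^{\ast} = b^{\ast}y^{\ast}$. Expanding the two sides of the desired identity $xy^{\ast} = yx^{\ast}$ and using that projections commute (RR3), both reduce to comparing $a(x^{\ast}y^{\ast})$ with $b(x^{\ast}y^{\ast})$, where $q := x^{\ast}y^{\ast}$ is a projection by Lemma~\ref{lem:one}(6). I would then insert $b^{\ast}$ via $y^{\ast} = b^{\ast}y^{\ast}$, slide it to the front past the projections using (RR3), apply the hypothesis $ab^{\ast} = ba^{\ast}$, and finally absorb $a^{\ast}x^{\ast} = x^{\ast}$ to land on $bq$. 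This gives $aq = bq$, and hence $xy^{\ast} = yx^{\ast}$.

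For part (1) the target is $ac(bd)^{\ast} = bd(ac)^{\ast}$, and I would reduce each side separately to a common normal form. Writing $L = ac(bd)^{\ast}$, the fact that $(bd)^{\ast} \leq d^{\ast}$ (Lemma~\ref{lem:one}(4)) lets me replace $(bd)^{\ast}$ by $d^{\ast}(bd)^{\ast}$ and then pull $c$ through using the compatibility $cd^{\ast} = dc^{\ast}$, obtaining $L = adc^{\ast}(bd)^{\ast}$. The crucial step is to apply (RR6) in the form $d(bd)^{\ast} = b^{\ast}d$ (take $t = b$, $s = d$); combined with (RR3) and the second compatibility $dc^{\ast} = cd^{\ast}$, this collapses $L$ to $ab^{\ast}cd^{\ast}$. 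Running the symmetric computation on $R = bd(ac)^{\ast}$, using $(ac)^{\ast} \leq c^{\ast}$ and the slide $c(ac)^{\ast} = a^{\ast}c$ coming from (RR6) with $t = a$, $s = c$, yields $R = ba^{\ast}cd^{\ast}$. The two normal forms differ only in the leading factor, so the hypothesis $a \sim_{l} b$, i.e. $ab^{\ast} = ba^{\ast}$, gives $L = R$ at once.

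The routine parts are the bookkeeping with (RR3) and the order-to-equation translations via Lemma~\ref{lem:two}. The main obstacle is part (1): it is not evident in advance that both products should be driven to the shape $(\,\cdot\,)cd^{\ast}$, and the decisive moves are the two applications of (RR6) that convert $d(bd)^{\ast}$ and $c(ac)^{\ast}$ into $b^{\ast}d$ and $a^{\ast}c$. Recognising that these ``slide'' identities are exactly what is needed to expose the single discrepant factor $ab^{\ast}$ versus $ba^{\ast}$, so that the first compatibility relation finishes the argument, is the real content of the proof.
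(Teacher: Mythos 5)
Your proposal is correct and takes essentially the same route as the paper: part (2) is the identical computation, and part (1) is the paper's calculation run in reverse --- the paper multiplies the two compatibility equations to get $ab^{\ast}cd^{\ast} = ba^{\ast}dc^{\ast}$ and pushes forward with (RR6) to reach $ac(bd)^{\ast} = bd(ac)^{\ast}$, whereas you normalise each side of that goal back down to $ab^{\ast}cd^{\ast}$ and $ba^{\ast}cd^{\ast}$. The tools (the (RR6) slides, commuting projections via (RR3), and the two compatibility hypotheses) play the same roles in both, so nothing further is needed.
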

\begin{proof} (1) We are given that $a \sim_{l} b$ and $c \sim_{l} d$.
This means that $ab^{\ast} = ba^{\ast}$ and $cd^{\ast} = dc^{\ast}$.
Thus $ab^{\ast}cd^{\ast} = ba^{\ast}dc^{\ast}$.
By axiom (RR6), we have that
$ac(bc)^{\ast}d^{\ast} = bd(ad)^{\ast}c^{\ast}$.
We focus on the left hand side of the above equality.
By axiom (RR2), we have that $ac(bc)^{\ast}d^{\ast} = ac [(bc)^{\ast}d^{\ast}]^{\ast}$.
By axiom (RR5), we have that $ac [(bc)^{\ast}d^{\ast}]^{\ast} = ac[bcd^{\ast}]^{\ast}$.
But $cd^{\ast} = dc^{\ast}$.
Thus $ac[bcd^{\ast}]^{\ast} =  ac[bdc^{\ast}]^{\ast}$.
Apply axiom (RR5), to get $ac[bdc^{\ast}]^{\ast} = ac[(bd)^{\ast}c^{\ast}]^{\ast}$.
By axiom (RR2), this is just  $ac(bd)^{\ast}c^{\ast}$.
We now use axiom (RR3) and axiom (RR4), to get $ac(bd)^{\ast}c^{\ast} = ac (bd)^{\ast}$.
We have proved that $ac(bc)^{\ast}d^{\ast} = ac (bd)^{\ast}$.
We may similarly show that $bd(ad)^{\ast}c^{\ast} = bd(ac)^{\ast}$.
The result now follows.

(2) We have that $ab^{\ast} = ba^{\ast}$ and $x = ax^{\ast}$ and $y = by^{\ast}$.
We have that 
$xy^{\ast} = ax^{\ast}b^{\ast}y^{\ast} = ba^{\ast}x^{\ast}y^{\ast}$
and
$yx^{\ast} = by^{\ast}a^{\ast}x^{\ast}$.
Since projections commute, we have shown that $x \sim_{l} y$.
\end{proof}

The following result tells us that being left-compatible is a property of the poset 
and the star operation alone and not the semigroup structure.

\begin{lemma}\label{lem:meets} Let $S$ be a right restriction semigroup. 
Then $a \sim_{l} b$ if and only if $a \wedge b$ exists and $(a \wedge b)^{\ast} = a^{\ast}b^{\ast}$.
\end{lemma}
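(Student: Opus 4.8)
The plan is to prove both implications by exhibiting the meet explicitly as the common value $ab^{\ast} = ba^{\ast}$ in the left-compatible case, and conversely by reading off left-compatibility from the factorizations that the meet forces.

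For the forward direction, I would suppose $a \sim_{l} b$, so that $ab^{\ast} = ba^{\ast}$, and call this common element $c$. First I would check that $c$ is a lower bound of both $a$ and $b$: since $c = ab^{\ast}$ is $a$ multiplied on the right by the projection $b^{\ast}$, Lemma~\ref{lem:two}(1) gives $c \leq a$, and symmetrically $c = ba^{\ast} \leq b$. Next I would compute $c^{\ast}$: by axiom (RR5) we have $c^{\ast} = (ab^{\ast})^{\ast} = (a^{\ast}b^{\ast})^{\ast}$, and by axiom (RR2) this equals $a^{\ast}b^{\ast}$, which already yields the claimed formula $(a \wedge b)^{\ast} = a^{\ast}b^{\ast}$ once $c$ is known to be the meet. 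To show $c$ is the greatest lower bound, I would take any $d$ with $d \leq a$ and $d \leq b$. From $d \leq b$ and Lemma~\ref{lem:two}(3) we get $d^{\ast} \leq b^{\ast}$, i.e. $d^{\ast} = b^{\ast}d^{\ast}$; combining this with $d = ad^{\ast}$ (which is $d \leq a$) gives $d = ab^{\ast}d^{\ast} = cd^{\ast}$, that is, $d \leq c$. Hence $c = a \wedge b$.

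For the converse, I would suppose $a \wedge b$ exists and write $c = a \wedge b$ with $c^{\ast} = a^{\ast}b^{\ast}$. Using $c \leq a$ we have $c = ac^{\ast} = a(a^{\ast}b^{\ast}) = (aa^{\ast})b^{\ast} = ab^{\ast}$, the last step by axiom (RR4). Using $c \leq b$ together with the commutativity of projections (axiom (RR3)) we similarly obtain $c = bc^{\ast} = b(a^{\ast}b^{\ast}) = (bb^{\ast})a^{\ast} = ba^{\ast}$. Comparing the two expressions gives $ab^{\ast} = ba^{\ast}$, which is exactly $a \sim_{l} b$.

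The computations are short, so the only step requiring genuine care is the greatest-lower-bound verification in the forward direction: there one must use $d^{\ast} \leq b^{\ast}$ to absorb the extra projection $b^{\ast}$ and rewrite $d = ad^{\ast}$ as $d = (ab^{\ast})d^{\ast}$. Everything else is a direct application of the restriction axioms together with the description of the natural partial order in Lemma~\ref{lem:two}.
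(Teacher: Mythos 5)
Your proof is correct and follows essentially the same route as the paper's: both directions identify the meet explicitly as $ab^{\ast}=ba^{\ast}$, verify the greatest-lower-bound property by absorbing $b^{\ast}$ into $d^{\ast}$ via $d^{\ast}\leq b^{\ast}$, and recover left-compatibility in the converse from $c=ac^{\ast}=ab^{\ast}$ and its symmetric counterpart. Your version simply spells out the appeals to (RR2), (RR4), (RR5) and Lemma~\ref{lem:two} that the paper leaves implicit.
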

\begin{proof} Suppose first $a \sim_{l} b$.
Put $x = ab^{\ast} = ba^{\ast}$.
Clearly, $x \leq a,b$ and $x^{\ast} = a^{\ast}b^{\ast}$.
Suppose that $z \leq a,b$.
Then $z^{\ast} \leq a^{\ast}, b^{\ast}$.
By the definition of the natural partial order,
we have that $z = az^{\ast} = bz^{\ast} \leq ab^{\ast} = x$.
We now prove the converse.
We have that $a \wedge b \leq a$ and so $a \wedge b = a(a \wedge b)^{\ast} = ab^{\ast}$.
By symmetry, we have that $a \wedge b = ba^{\ast}$.
It follows that $ab^{\ast} = ba^{\ast}$ and so $a$ and $b$ are left compatible.
\end{proof}

\begin{remark}{\em If $a \sim_{l} b$ then $a \wedge b = ab^{\ast}$ and so
this meet is {\em algebraically defined}.
It is therefore preserved under any homomorphism of right restriction semigroups.}
\end{remark}

Let $S$ be a right restriction semigroup.
An element $a \in S$ is said to be a {\em partial unit}
if there is an element $b \in S$ such that $ba = a^{\ast}$ and $ab = b^{\ast}$.
Clearly, every projection is a partial unit.
The set of all partial units of $S$ is denoted by $\mathsf{Inv}(S)$.

\begin{lemma}\label{lem:spilled-tea} Let $S$ be a right restriction semigroup
and let $a \in \mathsf{Inv}(S)$.
Suppose that $ax = x^{\ast}$ and $xa = a^{\ast}$,
and $ay = y^{\ast}$ and $ya = a^{\ast}$.
Then $x = y$.
\end{lemma}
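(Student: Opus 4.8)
The plan is to deduce $x = y$ from the antisymmetry of the natural partial order (Lemma~\ref{lem:two}(4)), by establishing both $y \leq x$ and $x \leq y$ separately. Everything will reduce to two short manipulations of the four given identities, and the argument is essentially the uniqueness-of-inverses argument rephrased for partial units.

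First I would record the two \emph{absorption identities} $x = a^{\ast}x$ and $y = a^{\ast}y$. For the first, begin with $x = xx^{\ast}$ (axiom (RR4)), substitute the hypothesis $x^{\ast} = ax$, and reassociate: $x = x(ax) = (xa)x = a^{\ast}x$, where the last step uses $xa = a^{\ast}$. The identity $y = a^{\ast}y$ follows by the identical computation starting from $y = yy^{\ast}$ together with $y^{\ast} = ay$ and $ya = a^{\ast}$. Note also that $x^{\ast}$ and $y^{\ast}$ are projections by (RR1), which I will need below.

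The crux is then to evaluate the triple product $xay$ in two ways. Associating as $(xa)y$ and using $xa = a^{\ast}$ followed by the absorption identity $a^{\ast}y = y$ gives $xay = y$; associating as $x(ay)$ and using $ay = y^{\ast}$ gives $xay = xy^{\ast}$. Hence $y = xy^{\ast}$, and since $y^{\ast}$ is a projection, Lemma~\ref{lem:two}(1) yields $y \leq x$. Running the symmetric computation on the sandwich $yax$ (using $ya = a^{\ast}$ and $a^{\ast}x = x$ on one side, and $ax = x^{\ast}$ on the other) gives $x = yx^{\ast} \leq y$, and antisymmetry of $\leq$ then forces $x = y$.

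The main obstacle, such as it is, is spotting that one should form the sandwich product $xay$ and play the two bracketings off against the two hypotheses; once the absorption identities are in place the rest is forced. I would not expect any genuine difficulty beyond this, and in particular I do not anticipate needing compatibility (Lemma~\ref{lem:spiked} or Lemma~\ref{lem:meets}) — the bare axioms (RR1) and (RR4) together with Lemma~\ref{lem:two} should suffice.
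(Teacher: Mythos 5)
Your proof is correct and follows essentially the same route as the paper: both arguments evaluate the sandwich product $xay$ (resp.\ $yax$) in two ways using $xa = ya = a^{\ast}$ to obtain $y = xy^{\ast}$ and $x = yx^{\ast}$, and then conclude by antisymmetry of the natural partial order. The paper merely compresses your absorption identities into the single line $xay = yay$, so no substantive difference.
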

\begin{proof} We have that $xa = ya$.
Thus $xay = yay$.
It follows that $y = xy^{\ast}$ and so $y \leq x$.
By symmetry, $x \leq y$ and so $x = y$.
\end{proof}

Let $S$ be a right restriction semigroup.
If $a \in \mathsf{Inv}(S)$ then we shall denote by $a^{-1}$ the unique
element guaranteed by Lemma~\ref{lem:spilled-tea} such that $aa^{-1} = (a^{-1})^{\ast}$ and $a^{-1}a = a^{\ast}$.
We shall now say more about the set $\mathsf{Inv}(S)$.
Most of the following was first proved as \cite[Lemma 2.14]{CG}.
We give proofs anyway.

\begin{lemma}\label{lem:holdo} Let $S$ be a right restriction semigroup.
\begin{enumerate}

\item If $a,b \in \mathsf{Inv}(S)$ then $ab \in \mathsf{Inv}(S)$. 

\item If $a,b \in \mathsf{Inv}(S)$ then $(ab)^{-1} = b^{-1}a^{-1}$.

\item If $a,b \in \mathsf{Inv}(S)$ then 
$a \leq b$ if and only if $a = aa^{-1}b$.

\item $\mathsf{Inv}(S)$ is an inverse semigroup with set of idempotents $\mathsf{Proj}(S)$.

\item $\mathsf{Inv}(S)$ is an order ideal.

\end{enumerate}
\end{lemma}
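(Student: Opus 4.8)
The plan is to prove parts (1) and (2) together, then (4), then (5), and finally to deduce (3) from the computation underlying (5); the product results feed the inverse--semigroup structure, while the restriction computation behind (5) is what ultimately yields the order characterisation in (3). For (1) and (2) I would guess the inverse of $ab$ to be $v=b^{-1}a^{-1}$ and verify the two defining equations of a partial unit directly. For $v(ab)$ I would collapse $a^{-1}a$ to $a^{\ast}$, then apply (RR6) in the form $a^{\ast}b=b(ab)^{\ast}$ to reach $v(ab)=b^{\ast}(ab)^{\ast}$, and finally use $(ab)^{\ast}\leq b^{\ast}$ (Lemma~\ref{lem:one}(4)) to reduce this to $(ab)^{\ast}$. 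Dually, for $(ab)v$ I would collapse $bb^{-1}=(b^{-1})^{\ast}$, push this projection past $a^{-1}$ by (RR6), identify $((b^{-1})^{\ast}a^{-1})^{\ast}$ with $v^{\ast}$ via (RR5), and use $v^{\ast}\leq(a^{-1})^{\ast}$ (again Lemma~\ref{lem:one}(4)) to obtain $(ab)v=v^{\ast}$. These two identities show $ab\in\mathsf{Inv}(S)$, and uniqueness of the partner (Lemma~\ref{lem:spilled-tea}) forces $(ab)^{-1}=b^{-1}a^{-1}$, giving both (1) and (2).

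Turning to (4) and (5): by (1), $\mathsf{Inv}(S)$ is a subsemigroup, and the defining equations give $aa^{-1}a=a$ and $a^{-1}aa^{-1}=a^{-1}$ (each reduces via (RR4) once $a^{-1}a=a^{\ast}$ and $aa^{-1}=(a^{-1})^{\ast}$ are inserted), so $\mathsf{Inv}(S)$ is regular. To identify the idempotents, I would take $e\in\mathsf{Inv}(S)$ with $e^{2}=e$ and expand $e^{-1}=e^{-1}ee^{-1}=e^{-1}e^{2}e^{-1}=(e^{-1}e)(ee^{-1})=e^{\ast}(e^{-1})^{\ast}$; this is a product of projections, hence a projection by Lemma~\ref{lem:one}(6), so uniqueness of the partner (Lemma~\ref{lem:spilled-tea}) forces $e=e^{-1}$ and thus $e=e^{\ast}$. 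Hence the idempotents of $\mathsf{Inv}(S)$ are exactly the projections, which commute by (RR3); a regular semigroup with commuting idempotents is inverse, proving (4). For (5), given $x\leq a$ with $a\in\mathsf{Inv}(S)$, I would propose the partner $y=x^{\ast}a^{-1}$: from $x=ax^{\ast}$ and $x^{\ast}\leq a^{\ast}$ one gets $yx=x^{\ast}a^{\ast}x^{\ast}=x^{\ast}$, while $xy=ax^{\ast}a^{-1}$ becomes a product of projections after applying (RR6) to $x^{\ast}a^{-1}$, and is identified with $y^{\ast}$ using (RR5) together with $y^{\ast}\leq(a^{-1})^{\ast}$ (Lemma~\ref{lem:one}(4)), exactly as in the computation of $(ab)v$ above. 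Thus $x\in\mathsf{Inv}(S)$ with $x^{-1}=x^{\ast}a^{-1}$, so $\mathsf{Inv}(S)$ is an order ideal.

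Finally, for (3): the implication $a=aa^{-1}b\Rightarrow a\leq b$ is immediate, since $aa^{-1}$ is a projection and hence $a=(aa^{-1})b\leq b$ by Lemma~\ref{lem:two}(2). For the converse I would feed $a\leq b$ into the restriction formula from (5), applied with $x=a$ lying beneath the partial unit $b$, obtaining $a^{-1}=a^{\ast}b^{-1}$; then $aa^{-1}b=(aa^{\ast})(b^{-1}b)=ab^{\ast}=a$, using $aa^{\ast}=a$, $b^{-1}b=b^{\ast}$ and $a^{\ast}\leq b^{\ast}$. I expect this converse to be the main obstacle: working only with $a$ and $a^{-1}$, one repeatedly recovers just $a^{\ast}\leq a^{-1}b$ (equivalently $a\leq aa^{-1}b$) and never the reverse inequality, because bounding the projection $(a^{-1}b)^{\ast}$ from above genuinely requires the invertibility of the \emph{larger} element $b$. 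Establishing the order--ideal computation of (5) first is precisely what supplies $a^{-1}=a^{\ast}b^{-1}$ and breaks this impasse; thereafter the remaining steps are routine applications of (RR4), (RR5) and the order on projections.
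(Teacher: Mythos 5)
Your proposal is correct and follows essentially the same route as the paper: the same candidate inverses ($b^{-1}a^{-1}$ for products, $x^{\ast}a^{-1}$ for restrictions), the same ``idempotent partial unit is a product of two projections'' argument for (4), and the same key identity $a^{-1}=a^{\ast}b^{-1}$ driving (3). Your only deviation is organisational --- proving (5) before (3) so that the identity $a^{-1}=a^{\ast}b^{-1}$ is available rather than re-derived, which is a harmless (arguably cleaner) rearrangement of the paper's argument.
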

\begin{proof} (1) We have that
$aa^{-1} = (a^{-1})^{\ast}$ and $a^{-1}a = a^{\ast}$
and
$bb^{-1} = (b^{-1})^{\ast}$ and $b^{-1}b = b^{\ast}$.
Observe that
$ab(b^{-1}a^{-1}) =(b^{-1}a^{-1})^{\ast}$ and $(b^{-1}a^{-1})ab = (ab)^{\ast}$.

(2) Immediate from (1) above.

(3) Only one direction needs proving.
Suppose that $a \leq b$.
Then $a = ba^{\ast}$.
Now, $a$ has inverse $a^{-1}$ but $a^{\ast}b^{-1}$ is also an inverse.
So, by the uniqueness of inverses  guaranteed by Lemma~\ref{lem:spilled-tea},
we have that $a^{-1} = a^{\ast}b^{-1}$.
Thus $a^{-1} \leq b^{-1}$ and so
$a^{-1} = b^{-1}(a^{-1})^{\ast}$ from the definition of the natural partial order.
Taking inverses of both sides again, we get that $a = (a^{-1})^{\ast}b$ 
and so $a = aa^{-1}b$, as required.

(4) By (1) above, it follows that $\mathsf{Inv}(S)$ is closed under products. 
Suppose, now, that $a \in \mathsf{Inv}(S)$ is an idempotent.
Let $b \in S$ be such that $ab = b^{\ast}$, $ba = a^{\ast}$.
Since $a$ is an idempotent, we have that $a = (aba)(aba) = a(ba)(ab)a$.
But projections commute.
Thus $a = (a^{2}b)(ba^{2}) = (ab)(ba)$.
It follows that $a$ is the product of two projections and so is itself a projection.
Since the projections commute, we have proved that $\mathsf{Inv}(S)$ is an inverse semigroup.

(5) Suppose that $a \in \mathsf{Inv}(S)$ and that $b \leq a$.
We prove that $b \in \mathsf{Inv}(S)$. 
By definition, $b = ab^{\ast}$.
Observe that
$b^{\ast}a^{-1}b = a^{\ast}b^{\ast} = (ab^{\ast})^{\ast}$
and
$bb^{\ast}a^{-1} = ab^{\ast}a^{-1} = (a^{-1})(b^{\ast}a^{-1})^{\ast} = (b^{\ast}a^{-1})^{\ast}$.
This proves that $b \in \mathsf{Inv}(S)$.
\end{proof}

\section{Order completeness properties of right restriction semigroups}

We shall study right restriction semigroups which satisfy
some order completeness properties with respect to the natural partial order.
A set of elements in a right restriction monoid is said to be {\em left-compatible}
if each pair of elements is left-compatible.
We say that a right restriction semigroup is {\em complete}
if every left-compatible set of elements has a join
and multiplication distributes over such joins {\em from the right}.
This means that if $\bigvee_{i} a_{i}$ exists then $\left( \bigvee_{i} a_{i} \right)b = \bigvee_{i} a_{i}b$;
this makes sense by Lemma~\ref{lem:similarity}.
A {\em homomorphism} between complete right restriction monoids is required to preserve
all left-compatible joins.
We say that a right restriction semigroup is {\em distributive}
if each pair of left-compatible elements has a join,
and multiplication distributes over such binary joins {\em from the right};
observe that in this case the projections form a distributive lattice (with a bottom but not necessarily a top).
A {\em homomorphism} between distributive right restriction monoids
is required to preserve all finite left-compatible joins.
A distributive right restriction semigroup is {\em Boolean}
if the set of projections actually forms a generalized Boolean algebra,
where a `generalized Boolean algebra' is a distributive lattice (again with a bottom but not necessarily a top)
in which each principal order ideal is a Boolean algebra.
Observe that the set of projections in a Boolean right retriction {\em monoid}
forms a Boolean algebra.
We can make similar definitions for inverse semigroups,
but require compatibility rather than left-compatibility.

In the following result, 
part (1) is proved in \cite[Proposition~2.14(i)]{CCG},
part (2) is a slightly expanded version of  \cite[Lemma 2.15]{KL},
part (3) is proved in \cite[Proposition 2.14(iii)]{CCG},
and
parts (4) and (5) are the analogues of parts (3) and (4) of \cite[Lemma 2.5]{Lawson2016} with almost identical proofs.

\begin{lemma}\label{lem:posy} Let $S$ be a right restriction semigroup.
\begin{enumerate}
\item If $\bigvee_{j \in I}a_{j}$ exists then $a_{i} = \left( \bigvee_{j \in I}a_{j} \right)a_{i}^{\ast}$ for each $i \in I$.

\item If both $\bigvee_{i \in I} a_{i}$ and $\bigvee_{i \in I} a_{i}^{\ast}$ exist
then
$\bigvee_{i \in I} a_{i}^{\ast}$ is a projection
and
$\left(  \bigvee_{i \in I} a_{i} \right)^{\ast} = \bigvee_{i \in I} a_{i}^{\ast}$.

\item Let $S$ be a complete right restriction semigroup
and if $I$ is finite then we may assume that $S$ is only a distributive right restriction semigroup.
Suppose that $\bigvee_{i \in I} a_{i}$ is defined.
Then $c\left( \bigvee_{i \in I} a_{i} \right) = \bigvee_{i \in I} ca_{i}$.

\item Let $S$ be a distributive right restriction semigroup.
Suppose that $\bigvee_{i=1}^{m} a_{i}$ and $c \wedge \left( \bigvee_{i=1}^{m} a_{i}   \right)$
both exist. Then all meets $c \wedge a_{i}$ exist, the join $\bigvee_{i=1}^{m} c \wedge a_{i}$ exists, 
and $c \wedge \left( \bigvee_{i=1}^{m} a_{i}   \right) = \bigvee_{i=1}^{m} c \wedge a_{i}$.

\item Let $S$ be a distributive right restriction semigroup.
Suppose that $b = \bigvee_{i=1}^{m} b_{i}$ exists, and all meets $a \wedge b_{i}$ exist.
Then the meet $a \wedge b$ exists, and is equal to $\bigvee_{i=1}^{m} a \wedge b_{i}$.
\end{enumerate}
\end{lemma}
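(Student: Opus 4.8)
The plan is to prove the five parts in order, using each to bootstrap the next: parts (1) and (2) are purely order-theoretic, part (3) reduces left distributivity to the \emph{assumed} right distributivity, and parts (4) and (5) then follow by combining (2) and (3) with the meet characterization of Lemma~\ref{lem:meets}. For part (1), I would simply note that $\bigvee_{j}a_{j}$ is by definition an upper bound, so $a_{i}\leq\bigvee_{j}a_{j}$, and the displayed identity is then just the definition of the natural partial order. For part (2), write $e=\bigvee_{i}a_{i}^{\ast}$ and $a=\bigvee_{i}a_{i}$. To see that $e$ is a projection, observe that each $a_{i}^{\ast}\leq e$ forces $a_{i}^{\ast}\leq e^{\ast}$ by Lemma~\ref{lem:two}(3), so $e^{\ast}$ is also an upper bound of the $a_{i}^{\ast}$ and hence $e\leq e^{\ast}$; unwinding this inequality through the definition of the order and the fact that $e^{\ast}$ is idempotent (Lemma~\ref{lem:one}(1)) gives $e=e^{\ast}$. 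For the identity $a^{\ast}=e$ I would prove the two inequalities separately: $e\leq a^{\ast}$ since $a^{\ast}$ bounds the $a_{i}^{\ast}$ (again Lemma~\ref{lem:two}(3)), and $a^{\ast}\leq e$ by first showing $a=ae$ (each $a_{i}=aa_{i}^{\ast}=aea_{i}^{\ast}=(ae)a_{i}^{\ast}$ makes $ae$ an upper bound of the $a_{i}$, while $ae\leq a$ is automatic) and then applying axiom (RR5) together with Lemma~\ref{lem:one}(2),(6) to $a=ae$.

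The conceptual heart is part (3), where the only hypothesis is distribution of multiplication over joins \emph{from the right}. The key move is to avoid acting on the join $\bigvee_{i}a_{i}$ from the left at all: writing $a=\bigvee_{i}a_{i}$, I would use $aa^{\ast}=a$ and part (2) (whose hypotheses are met, since the projections $a_{i}^{\ast}$ are pairwise left-compatible, so their join exists under the stated completeness hypothesis) to compute
\[
ca=c(aa^{\ast})=(ca)a^{\ast}=(ca)\Bigl(\bigvee_{i}a_{i}^{\ast}\Bigr)=\bigvee_{i}(ca)a_{i}^{\ast}=\bigvee_{i}c(aa_{i}^{\ast})=\bigvee_{i}ca_{i},
\]
where the middle equality is right distributivity and the last uses part (1). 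One must also check that $\bigvee_{i}ca_{i}$ is a legitimate join, which follows from Lemma~\ref{lem:similarity}(1) applied to $c\sim_{l}c$ and $a_{i}\sim_{l}a_{j}$.

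Finally, parts (4) and (5) I would reduce to the earlier parts via Lemma~\ref{lem:meets}, which identifies $x\wedge y$ with $xy^{\ast}$ precisely when $x\sim_{l}y$. For part (4), left-compatibility of $c$ with each $a_{i}$ follows from $c\sim_{l}\bigvee_{i}a_{i}$ and Lemma~\ref{lem:similarity}(2), so every $c\wedge a_{i}=ca_{i}^{\ast}$ exists; these all lie below $c$ and hence are pairwise left-compatible by Lemma~\ref{lem:spiked}(2), giving the join $\bigvee_{i}ca_{i}^{\ast}$, which equals $c\bigl(\bigvee_{i}a_{i}^{\ast}\bigr)=ca^{\ast}=c\wedge a$ by part (3) and part (2). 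Part (5) is the mirror image: from $a\sim_{l}b_{i}$ for all $i$ one computes $ab^{\ast}=a\bigl(\bigvee_{i}b_{i}^{\ast}\bigr)=\bigvee_{i}ab_{i}^{\ast}$ and $ba^{\ast}=\bigl(\bigvee_{i}b_{i}\bigr)a^{\ast}=\bigvee_{i}b_{i}a^{\ast}$ using parts (2),(3) and right distributivity; since each $ab_{i}^{\ast}=b_{i}a^{\ast}$ these agree, so $a\sim_{l}b$ and $a\wedge b=ab^{\ast}=\bigvee_{i}(a\wedge b_{i})$.

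The main obstacle is concentrated in part (2). Once the star operation is known to commute with joins and to carry a join of elements to a join of projections, the left-distributivity of part (3) and the meet-distributivity of parts (4),(5) all fall out by the single trick of converting left-hand joins into right-hand ones; the remaining care needed in (4) and (5) is purely the bookkeeping of verifying, via Lemmas~\ref{lem:spiked} and~\ref{lem:similarity}, that the various meets and joins invoked actually exist.
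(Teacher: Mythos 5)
Parts (1) and (2) of your write-up are correct. The problem is exactly where you locate the "conceptual heart". In the chain $ca=(ca)a^{\ast}=(ca)\bigl(\bigvee_{i}a_{i}^{\ast}\bigr)=\bigvee_{i}(ca)a_{i}^{\ast}$, the equality you label ``right distributivity'' is not an instance of it: the assumed law is $\bigl(\bigvee_{i}x_{i}\bigr)b=\bigvee_{i}x_{i}b$, with the join as the \emph{left} factor, whereas here the join $\bigvee_{i}a_{i}^{\ast}$ is the right factor and $ca$ acts on it from the left. That is precisely the left distributivity you are trying to prove, specialized to a join of projections, so the argument as written is circular. The step is true, but it is where the real work lives: put $d=\bigvee_{i}ca_{i}$ (which exists by Lemma~\ref{lem:similarity} and satisfies $d\leq ca$), then compute $(ca)^{\ast}=a^{\ast}(ca)^{\ast}=\bigl(\bigvee_{i}a_{i}^{\ast}\bigr)(ca)^{\ast}=\bigvee_{i}a_{i}^{\ast}(ca)^{\ast}$ --- now a legitimate use of right distributivity, since after commuting projections by (RR3) the join is the left factor --- note that $a_{i}^{\ast}(ca)^{\ast}=\bigl((ca)a_{i}^{\ast}\bigr)^{\ast}=(ca_{i})^{\ast}$ by (RR5) and part (1), deduce $(ca)^{\ast}=d^{\ast}$ from part (2), and conclude $d=ca$ by Lemma~\ref{lem:spiked}(1).

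Parts (4) and (5) have a second, independent gap: you infer $c\sim_{l}\bigvee_{i}a_{i}$ (respectively $a\sim_{l}b_{i}$) from the mere existence of the relevant meet. Lemma~\ref{lem:meets} characterizes left-compatibility by existence of the meet \emph{together with} $(a\wedge b)^{\ast}=a^{\ast}b^{\ast}$; existence alone does not suffice (in a symmetric inverse monoid any two partial bijections have a meet, yet most pairs are not left-compatible). Since (4) and (5) deliberately assume nothing about compatibility, your identification $c\wedge a_{i}=ca_{i}^{\ast}$ is unavailable and the whole reduction collapses. The arguments that do work stay order-theoretic: for (4), set $d=c\wedge\bigl(\bigvee_{i}a_{i}\bigr)$, use right distributivity to get $d=\bigl(\bigvee_{i}a_{i}\bigr)d^{\ast}=\bigvee_{i}a_{i}d^{\ast}$, and verify directly that $a_{i}d^{\ast}$ is the meet $c\wedge a_{i}$ (any $z\leq c,a_{i}$ satisfies $z\leq d$, whence $z=a_{i}z^{\ast}d^{\ast}\leq a_{i}d^{\ast}$); for (5), the elements $a\wedge b_{i}$ all lie below $a$, hence are pairwise left-compatible with join $d$, and any $z\leq a,b$ satisfies $z=\bigvee_{i}b_{i}z^{\ast}$ with $b_{i}z^{\ast}\leq a\wedge b_{i}$, so $z\leq d$ and $d=a\wedge b$.
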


\begin{remark}{\em
Part (3) of Lemma~\ref{lem:posy} 
tells us that although complete or distributive right restriction 
semigroups were defined in terms of multiplication distributing over any joins {\em from the right},
in fact, multiplication in such semigroups distributes over any joins also {\em from the left.}
}
\end{remark}

The following result is expected.
It was first proved in \cite{CG}.

\begin{lemma}\label{lem:sleet} In a Boolean right restriction monoid,
the set of partial units forms a Boolean inverse monoid.
\end{lemma}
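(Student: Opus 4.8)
The plan is to build on Lemma~\ref{lem:holdo}, which already establishes that $\mathsf{Inv}(S)$ is an inverse semigroup whose idempotents are exactly $\mathsf{Proj}(S)$ and which is an order ideal in $S$. Since $1 \cdot 1 = 1 = 1^{\ast}$, the identity is a partial unit, so $\mathsf{Inv}(S)$ is in fact an inverse monoid. Because $S$ is a Boolean right restriction monoid, its projections form a Boolean algebra, and these are precisely the idempotents of $\mathsf{Inv}(S)$; thus the idempotent requirement for a Boolean inverse monoid is immediate. What remains is to verify that $\mathsf{Inv}(S)$ is distributive: that any two compatible partial units have a join lying inside $\mathsf{Inv}(S)$, and that multiplication distributes over such joins.

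First I would check that the natural partial order of the inverse monoid $\mathsf{Inv}(S)$ agrees with the restriction to $\mathsf{Inv}(S)$ of the order on $S$. For $a,b \in \mathsf{Inv}(S)$ the right restriction order reads $a \leq b$ iff $a = b a^{\ast} = b a^{-1}a$, which is exactly the inverse-semigroup order; this is recorded in Lemma~\ref{lem:holdo}(3). Since $\mathsf{Inv}(S)$ is an order ideal of $S$ carrying the same order, any least upper bound computed in $S$ that happens to lie in $\mathsf{Inv}(S)$ is automatically the least upper bound in $\mathsf{Inv}(S)$.

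The crucial step is to show that the join of two compatible partial units is again a partial unit. Let $a,b \in \mathsf{Inv}(S)$ be compatible in the inverse-semigroup sense, so $a \sim_{l} b$ and $a \sim_{r} b$. Left-compatibility together with the Boolean hypothesis guarantees that $c = a \vee b$ exists in $S$; by Lemma~\ref{lem:yoghurt}, right-compatibility gives $a^{-1} \sim_{l} b^{-1}$, so $d = a^{-1} \vee b^{-1}$ exists in $S$ as well. I claim $d = c^{-1}$. Distributing on both sides (right distributivity holds by definition, left distributivity by Lemma~\ref{lem:posy}(3)) gives $cd = aa^{-1} \vee ab^{-1} \vee ba^{-1} \vee bb^{-1}$, a join of four projections: $aa^{-1}$ and $bb^{-1}$ are projections, while the cross terms $ab^{-1}$ and $ba^{-1}$ are idempotents by Lemma~\ref{lem:yakult} and hence projections by Lemma~\ref{lem:holdo}(4). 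Because idempotents commute, $ab^{-1} = (ab^{-1})(bb^{-1}) \leq bb^{-1}$ and symmetrically $ba^{-1} \leq aa^{-1}$, so the two cross terms are absorbed and $cd = aa^{-1} \vee bb^{-1}$, which equals $d^{\ast}$ by Lemma~\ref{lem:posy}(2). A symmetric computation yields $dc = a^{\ast} \vee b^{\ast} = c^{\ast}$. Thus $c = a \vee b$ is a partial unit with inverse $d$, so $\mathsf{Inv}(S)$ is closed under compatible joins, and by the previous paragraph these joins are indeed computed inside $\mathsf{Inv}(S)$.

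Finally, distributivity of multiplication over these joins in $\mathsf{Inv}(S)$ is inherited directly from $S$: products of partial units are partial units by Lemma~\ref{lem:holdo}(1), compatible joins of partial units are partial units by the previous step, and the distributive laws of the ambient Boolean right restriction monoid simply restrict. Assembling these observations, $\mathsf{Inv}(S)$ is a distributive inverse monoid whose idempotents form a Boolean algebra, that is, a Boolean inverse monoid. I expect the absorption argument for the cross terms $ab^{-1}$ and $ba^{-1}$---showing that $cd$ collapses exactly to $d^{\ast}$---to be the one point requiring genuine care, together with the bookkeeping needed to invoke Lemma~\ref{lem:posy}(2) legitimately, namely checking that the relevant joins of projections exist.
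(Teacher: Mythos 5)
Your proposal is correct and follows essentially the same route as the paper: reduce to showing that a compatible pair of partial units has a join that is again a partial unit, compute $(a\vee b)(a^{-1}\vee b^{-1})=aa^{-1}\vee ab^{-1}\vee ba^{-1}\vee bb^{-1}$, absorb the idempotent cross terms, and conclude the product is a projection (dually for the other order). The extra bookkeeping you include---agreement of the two partial orders, the identity being a partial unit, inheritance of distributivity---is correct and is simply left implicit in the paper's opening sentence ``it is enough to prove\ldots''.
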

\begin{proof} It is enough to prove that if $a$ and $b$ are partial units which are compatible then $a \vee b$ is a partial unit.
Since $a$ and $b$ are compatible, so too are $a^{-1}$ and $b^{-1}$
by Lemma~\ref{lem:yakult}.
It follows that the element
$a^{-1} \vee b^{-1}$ is defined.
We calculate $(a \vee b)(a^{-1} \vee b^{-1})$.
This equals $aa^{-1} \vee ab^{-1} \vee ba^{-1} \vee bb^{-1}$.
By assumption, both $ab^{-1}$ and $ba^{-1}$ are idempotents.
Observe that $ab^{-1} \leq aa^{-1}$ and $ba^{-1} \leq bb^{-1}$
Thus $(a \vee b)(a^{-1} \vee b^{-1}) = aa^{-1} \vee bb^{-1}$.
This is a join of projections and so is a projection by Lemma~\ref{lem:posy}.
Dually, we have that $(a^{-1} \vee b^{-1})(a \vee b)$ is a projection itself.
\end{proof}

We say that a Boolean right restriction monoid is {\em \'etale} if every element
is a join of a finite number of partial units.
We have now made all the definitions to state the following:\\

{\em The goal of this paper is to describe \'etale Boolean right restriction monoids in terms of their Boolean inverse monoids of partial units.}\\

The following will only be used in Remark~\ref{rem:functor}.

\begin{lemma}\label{lem:functor} Let $\theta \colon S \rightarrow T$ be a homomorphism between 
Boolean right restriction monoids.
Then $\theta$, restricted to $\mathsf{Inv}(S)$, is a homomorphism between Boolean inverse monoids.
\end{lemma}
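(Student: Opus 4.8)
The plan is to verify in turn the data required of a homomorphism of Boolean inverse monoids: that $\theta$ restricts to a map $\mathsf{Inv}(S) \rightarrow \mathsf{Inv}(T)$ which is a semigroup (indeed monoid) homomorphism, which preserves the involution $a \mapsto a^{-1}$, and which preserves finite compatible joins. Since the first two are quick, the real content lies in the joins.

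First I would check that $\theta(\mathsf{Inv}(S)) \subseteq \mathsf{Inv}(T)$ and that inverses are preserved. If $a \in \mathsf{Inv}(S)$ has inverse $a^{-1}$, then applying $\theta$ to the defining equations $a^{-1}a = a^{\ast}$ and $aa^{-1} = (a^{-1})^{\ast}$ and using $\theta(x^{\ast}) = \theta(x)^{\ast}$ gives $\theta(a^{-1})\theta(a) = \theta(a)^{\ast}$ and $\theta(a)\theta(a^{-1}) = \theta(a^{-1})^{\ast}$. Hence $\theta(a)$ is a partial unit, and by the uniqueness guaranteed by Lemma~\ref{lem:spilled-tea} we conclude $\theta(a)^{-1} = \theta(a^{-1})$. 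Thus $\theta|_{\mathsf{Inv}(S)}$ is a restriction of a semigroup homomorphism, carries $1$ to $1$, carries projections to projections, and preserves the involution.

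For the joins I would argue as follows. Let $a, b \in \mathsf{Inv}(S)$ be compatible in the inverse-semigroup sense; in particular they are left-compatible, so their join $a \vee b$ exists in $S$ and, $\theta$ being a homomorphism of Boolean right restriction monoids, $\theta(a \vee b) = \theta(a) \vee \theta(b)$ in $T$. The point is then to reconcile joins in the submonoid with joins in the ambient monoid. By Lemma~\ref{lem:sleet} the element $a \vee b$ is again a partial unit, and since $\mathsf{Inv}(S)$ is an order ideal (Lemma~\ref{lem:holdo}(5)) the partial order on $\mathsf{Inv}(S)$ agrees with that on $S$; consequently $a \vee b$ is also the join of $a$ and $b$ computed inside $\mathsf{Inv}(S)$. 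Because $\theta$ preserves the natural partial order and left-compatibility, and preserves inverses, it also preserves right-compatibility via Lemma~\ref{lem:yoghurt}, and hence full compatibility; so $\theta(a)$ and $\theta(b)$ are compatible in $\mathsf{Inv}(T)$, and the same order-ideal argument applied to $T$ shows that $\theta(a) \vee \theta(b)$ is the join computed inside $\mathsf{Inv}(T)$. This establishes preservation of binary compatible joins, from which the finite case follows by induction, noting that a partial join $a_1 \vee \cdots \vee a_k$ of a compatible family remains compatible with $a_{k+1}$.

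The main obstacle I anticipate is precisely this last reconciliation: a priori $\theta$ is only known to preserve \emph{left}-compatible joins taken in $S$, whereas being a homomorphism of Boolean inverse monoids demands preservation of \emph{two-sided} compatible joins taken in $\mathsf{Inv}$. The bridge is supplied by three facts already available: $\mathsf{Inv}$ is an order ideal, so joins do not depend on whether they are computed in the inverse monoid or the ambient monoid; joins of compatible partial units are partial units; and compatibility in the inverse monoid reduces, through the involution, to left-compatibility, which $\theta$ respects.
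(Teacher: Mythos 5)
Your proof is correct and follows the same route as the paper's (much terser) argument: partial units map to partial units by applying $\theta$ to the defining equations, and compatible joins are preserved because they are in particular left-compatible joins, which $\theta$ preserves by hypothesis. Your version simply fills in the details the paper leaves implicit, notably the agreement of joins computed in $\mathsf{Inv}(S)$ with those in $S$ and the preservation of two-sided compatibility via the involution and Lemma~\ref{lem:yoghurt}.
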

\begin{proof}
It is immediate from the definition, that $\theta$ maps partial units to partial units.
Since $\theta$ preserves left-compatible joins, it preserves compatible joins.
\end{proof}

The join in a  right restriction monoid has the following important property.

\begin{lemma}\label{lem:left-orthogonal} 
Let $S$ be a Boolean right restriction monoid.
Suppose that $a = \bigvee_{i=1}^{m} a_{i}$ is a left-compatible join.
Then $a = \bigvee_{i=1}^{m} b_{i}$, a left-orthogonal join, where each $b_{i} \leq a_{i}$.
\end{lemma}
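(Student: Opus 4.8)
The plan is to orthogonalize the projections $a_1^{\ast}, \ldots, a_m^{\ast}$ inside the Boolean algebra $\mathsf{Proj}(S)$ and then transport this orthogonalization back up to the elements $a_i$ by right-multiplication. Write $p_i = a_i^{\ast}$. Since $S$ is a Boolean right restriction \emph{monoid}, its projections form a Boolean algebra with top element $1 = 1^{\ast}$ (Lemma~\ref{lem:one}), so relative complements are available. Define $e_1 = p_1$ and, for $i > 1$,
$$e_i = p_i \wedge \overline{p_1 \vee \cdots \vee p_{i-1}},$$
where the complement is taken within $\mathsf{Proj}(S)$. A routine Boolean-algebra computation shows that the $e_i$ are pairwise orthogonal (if $i < j$ then $e_j \leq \overline{p_i} \leq \overline{e_i}$ since $e_i \leq p_i$, whence $e_i e_j = 0$) and that $\bigvee_{i=1}^{m} e_i = \bigvee_{i=1}^{m} p_i$.

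Next I would set $b_i = a_i e_i$. Since $e_i \leq p_i = a_i^{\ast}$ is a projection, Lemma~\ref{lem:two}(1) gives $b_i \leq a_i$, as required. I then need $b_i^{\ast} = e_i$: by axiom (RR5) we have $(a_i e_i)^{\ast} = (a_i^{\ast} e_i)^{\ast}$, and $a_i^{\ast} e_i$ is a product of projections, hence itself a projection (Lemma~\ref{lem:one}), equal to $e_i$ because $e_i \leq a_i^{\ast}$; thus $b_i^{\ast} = e_i$. As the $e_i$ are pairwise orthogonal, the $b_i$ are pairwise left-orthogonal, so their join, once shown to equal $a$, is automatically a left-orthogonal join.

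Finally I would check that $\bigvee_{i=1}^{m} b_i = a$. Here I would invoke Lemma~\ref{lem:posy}(1), which gives $a_i = a a_i^{\ast}$, so that $b_i = a a_i^{\ast} e_i = a e_i$. The elements $a e_i$ are left-orthogonal (their stars are the orthogonal $e_i$) and hence left-compatible, so their finite join exists and the finite case of left distributivity, Lemma~\ref{lem:posy}(3), applies:
$$\bigvee_{i=1}^{m} b_i = \bigvee_{i=1}^{m} a e_i = a \left( \bigvee_{i=1}^{m} e_i \right) = a \left( \bigvee_{i=1}^{m} a_i^{\ast} \right).$$
By Lemma~\ref{lem:posy}(2) we have $\bigvee_{i} a_i^{\ast} = \left( \bigvee_{i} a_i \right)^{\ast} = a^{\ast}$, and then axiom (RR4) gives $a a^{\ast} = a$, completing the argument.

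The conceptual heart is simply the observation that the disjointification of the projections can be carried up to the $a_i$ while keeping each $b_i$ beneath $a_i$; the only points demanding care, rather than genuine difficulty, are the star computation $b_i^{\ast} = e_i$ and the verification that the left-hand distributive law of Lemma~\ref{lem:posy}(3) is legitimately available for the left-orthogonal family $\{a e_i\}$.
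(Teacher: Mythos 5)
Your proof is correct and is essentially the paper's own argument: the paper carries out the same disjointification, setting $b_{i} = a_{i}\overline{a_{1}^{\ast} \vee \cdots \vee a_{i-1}^{\ast}}$ by induction on $m$ (peeling off one element at a time), whereas you write the orthogonalized projections $e_{i}$ in closed form and verify the required properties directly; the two constructions yield the same elements $b_{i}$. Your explicit check that $\bigvee_{i} b_{i} = a$ via parts (1)--(3) of Lemma~\ref{lem:posy} and axiom (RR4) is sound and fills in details the paper leaves implicit.
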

\begin{proof} We prove the case $n = 2$ directly.
Suppose that $x \sim_{l} y$.
We shall prove first that $x \vee y = x \vee y\overline{x^{\ast}}$.
This follows from the fact that $x^{\ast} \vee \overline{x^\ast} = 1$
and that $yx^{\ast} = xy^{\ast}$.
It is clear that $x$ and $y \overline{x^{\ast}}$ are left-orthogonal.
We now prove the general case by induction.
Let $a = \bigvee_{i=1}^{m+1} a_{i}$ be a left-compatible join.
We can write this as
$$a = \left( \bigvee_{i=1}^{m} a_{i} \right) \vee a_{m+1}.$$ 
By induction, this is
$$a = \left( \bigvee_{i=1}^{m} b_{i} \right) \vee a_{m+1}$$ 
where $b_{i} \leq a_{i}$ and the join in the brackets
is a left-orthogonal one.
Put $e = \bigvee_{i=1}^{m} b_{i}^{\ast}$
where we have used part (2) of Lemma~\ref{lem:posy}.
Then
$$a = \left( \bigvee_{i=1}^{m} b_{i} \right) \vee a_{m+1}\overline{e}$$ 
is a left-orthogonal join.
\end{proof}

\section{Complete right restriction monoids}

The material in this section generalizes the notion of nucleus to be found in \cite[Chapter II, Section 2]{J} by way of what we did
in \cite[Section 4]{LL}.
{\em Throughout this section, $S$ will be a right restriction monoid.}

If $A \subseteq S$, define $A^{\ast} = \{a^{\ast} \colon a \in A\}$.
The proof of the following is straightforward; for the proof of part (3) use Lemma~\ref{lem:similarity}.

\begin{lemma}\label{lem:gervais} 
Let $S$ be a right restriction monoid.
\begin{enumerate}
\item If $A$ and $B$ are order ideals then $AB$ is an order ideal.
\item If $A$ is an order ideal then $A^{\ast}$ is an order ideal.
\item If $A$ and $B$ are both left-compatible sets then $AB$ is a left-compatible set.
\end{enumerate}
\end{lemma}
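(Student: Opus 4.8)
The plan is to verify each of the three parts directly from the definitions, using the characterization of the natural partial order and the behaviour of star.

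For part (1), I would take a product $ab$ with $a \in A$ and $b \in B$, together with an element $c \leq ab$, and show that $c \in AB$. By the definition of the natural partial order, $c = (ab)c^{\ast}$. The key observation is that $bc^{\ast} \leq b$, which follows from Lemma~\ref{lem:two}(1) since $c^{\ast}$ is a projection; because $B$ is an order ideal, this puts $bc^{\ast} \in B$. Then $c = a(bc^{\ast})$ exhibits $c$ as an element of $AB$. Only the hypothesis that $B$ is an order ideal is actually needed here.

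For part (2), I would first note that every element of $A^{\ast}$ is a projection. Given $g \leq a^{\ast}$ with $a \in A$, I would argue that $g$ is itself a projection, since the projections form an order ideal by Lemma~\ref{lem:two}(6), and that consequently $g = a^{\ast}g$. The candidate preimage is $ag$: it satisfies $ag \leq a$ by Lemma~\ref{lem:two}(1), so $ag \in A$ because $A$ is an order ideal. It then remains to compute $(ag)^{\ast}$. Using axiom (RR5) we have $(ag)^{\ast} = (a^{\ast}g)^{\ast}$; since $a^{\ast}g$ is a product of projections it is a projection (Lemma~\ref{lem:one}(6)), and it equals $g$, so $(ag)^{\ast} = g$. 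Hence $g = (ag)^{\ast} \in A^{\ast}$.

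For part (3), I would follow the hint and invoke Lemma~\ref{lem:similarity}(1). Taking $ab, a'b' \in AB$ with $a,a' \in A$ and $b,b' \in B$, the hypothesis that $A$ and $B$ are left-compatible gives $a \sim_{l} a'$ and $b \sim_{l} b'$, and Lemma~\ref{lem:similarity}(1) then yields $ab \sim_{l} a'b'$ at once. The whole lemma is routine, so there is no serious obstacle; the only point requiring a little care is the star computation in part (2), where one must keep track of the fact that $a^{\ast}g$ reduces to $g$ and that star fixes projections.
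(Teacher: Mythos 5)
Your proof is correct and follows exactly the route the paper intends: the paper omits the proof as straightforward, noting only that part (3) follows from Lemma~\ref{lem:similarity}, which is precisely how you handle it, and your verifications of parts (1) and (2) (writing $c = a(bc^{\ast})$ and $g = (ag)^{\ast}$ respectively) are the standard computations, matching for instance the identity $a^{\downarrow}b^{\downarrow} = (ab)^{\downarrow}$ used later in the proof of Proposition~\ref{prop:completion}. No issues.
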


We shall generalize \cite{Schein} and prove that every right restriction monoid can be embedded in a complete right restriction monoid,
although the same construction can be found in \cite{CG}. 
We say that a subset of $S$ is {\em acceptable} if it is a left-compatible order ideal. 
Put $\mathsf{R}(S)$ equal to the set of all acceptable subsets of $S$.
Observe that subsets of $S$ of the form $a^{\downarrow}$ are acceptable by Lemma~\ref{lem:spiked}.
We may therefore define a function $\iota \colon S \rightarrow \mathsf{R}(S)$ by $\iota (a) = a^{\downarrow}$.

\begin{proposition}\label{prop:completion} Let $S$ be a right restriction monoid.
Then $\mathsf{R}(S)$ is a complete right restriction monoid
in which the natural partial order is subset inclusion and the projections are the order ideals of $\mathsf{Proj}(S)$.
In addition, the function  $\iota \colon S \rightarrow \mathsf{R}(S)$ is an embedding of right restriction monoids.
\end{proposition}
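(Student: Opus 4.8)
The plan is to verify each claimed structural feature of $\mathsf{R}(S)$ in turn, treating the whole thing as a routine-but-lengthy verification that acceptable subsets behave like a complete right restriction monoid under a suitable product and star. First I would pin down the operations. For the product, given acceptable $A,B \in \mathsf{R}(S)$, I define $A \cdot B = (AB)^{\downarrow}$ where $AB = \{ab \colon a \in A, b \in B\}$; the downward closure is needed because a pointwise product of two order ideals need not already be an order ideal. By Lemma~\ref{lem:gervais}(1) and (3), $AB$ is a left-compatible order ideal, so in fact $A \cdot B = AB$ already, and acceptability is preserved. For the star, I set $A^{\ast} = (A^{\ast})^{\downarrow}$ in the notation $A^{\ast} = \{a^{\ast} \colon a \in A\}$ introduced just above the statement; by Lemma~\ref{lem:gervais}(2) this is an order ideal, and it consists of projections, which are automatically left-compatible, so it is acceptable.

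Next I would check the six right-restriction axioms (RR1)--(RR6) hold for these set-level operations. The strategy throughout is to push each axiom down to the elementwise level, where it holds in $S$ by hypothesis, and then take downward closures; because the operations are defined via $\downarrow$ and $S$ already satisfies the axioms, each identity transfers. The identity element of the monoid is $1^{\downarrow}$, with $1^{\ast}=1$ by Lemma~\ref{lem:one}(3). I would then identify the natural partial order on $\mathsf{R}(S)$: one shows $A \leq B$ (meaning $A = B \cdot A^{\ast}$) is equivalent to $A \subseteq B$. The inclusion-implies-order direction uses that $A \subseteq B$ forces $A = \{b a^{\ast} \colon \ldots\}$-type containment via the natural order in $S$; the converse is immediate since $A = B \cdot A^{\ast} \subseteq B$. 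Similarly the projections of $\mathsf{R}(S)$ are exactly the sets $A$ with $A = A^{\ast}$, and these are precisely the order ideals contained in $\mathsf{Proj}(S)$, i.e. the order ideals of $\mathsf{Proj}(S)$.

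The genuinely substantive step, and the one I expect to be the main obstacle, is completeness: I must show every left-compatible family $\{A_k\}$ in $\mathsf{R}(S)$ has a join and that multiplication distributes over it from the right. The natural candidate is $\bigvee_k A_k = \bigl(\bigcup_k A_k\bigr)$, and the work is to verify this union is again acceptable. Order-ideal closure is clear, but left-compatibility of the union is the crux: I need that if $a \in A_j$ and $b \in A_k$ with $A_j \sim_l A_k$ in $\mathsf{R}(S)$, then $a \sim_l b$ in $S$. This is where I must carefully unwind what left-compatibility of the \emph{sets} $A_j, A_k$ means and match it against the elementwise relation, likely invoking Lemma~\ref{lem:similarity}(2) to pass compatibility down to dominated elements. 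Once the union is known acceptable it is visibly the least upper bound under $\subseteq$, and right distributivity $\bigl(\bigvee_k A_k\bigr)\cdot B = \bigvee_k (A_k \cdot B)$ follows because both sides are the downward closure of $\bigl(\bigcup_k A_k\bigr)B = \bigcup_k A_k B$.

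Finally I would confirm $\iota(a) = a^{\downarrow}$ is an embedding of right restriction monoids. That $\iota$ is a homomorphism reduces to the identities $a^{\downarrow} \cdot b^{\downarrow} = (ab)^{\downarrow}$ and $(a^{\downarrow})^{\ast} = (a^{\ast})^{\downarrow}$, each checked by the same down-closure argument as above (using Lemma~\ref{lem:two} to control the order); injectivity is immediate since $a^{\downarrow} = b^{\downarrow}$ forces $a \leq b$ and $b \leq a$, hence $a = b$ by antisymmetry of the natural partial order, Lemma~\ref{lem:two}(4). I would present completeness and left-compatibility of unions as the heart of the argument and treat the axiom checks and the homomorphism identities as routine transfers from the elementwise structure of $S$.
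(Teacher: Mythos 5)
Your overall architecture matches the paper's: subset multiplication (no downward closure needed, by Lemma~\ref{lem:gervais}), elementwise star, identification of $\leq$ with $\subseteq$, unions as joins, and the two identities $a^{\downarrow}b^{\downarrow}=(ab)^{\downarrow}$ and $\iota(a^{\ast})=\iota(a)^{\ast}$ for the embedding. You also correctly single out left-compatibility of unions as a crux, which is exactly where the paper proves that $A \sim_{l} B$ in $\mathsf{R}(S)$ is equivalent to $A \cup B$ being acceptable.

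However, there is a genuine gap in your treatment of the axioms. You assert that all six of (RR1)--(RR6) ``transfer'' by pushing each identity down to the elementwise level where it holds in $S$. This is true for (RR1)--(RR3) and (RR5), and nearly true for (RR4) (where the reverse inclusion $AA^{\ast}\subseteq A$ needs the order-ideal property, since $aa_{1}^{\ast}\leq a$ for $a,a_{1}\in A$). But it fails for (RR6). The inclusion $A^{\ast}B \subseteq B(AB)^{\ast}$ does follow elementwise from $a^{\ast}b = b(ab)^{\ast}$, but the reverse inclusion does not: a typical element of $B(AB)^{\ast}$ has the form $b(ab_{1})^{\ast}$ with $b$ and $b_{1}$ \emph{distinct} elements of $B$, and the elementwise axiom only handles the diagonal case $b=b_{1}$. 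To show $b(ab_{1})^{\ast}\in A^{\ast}B$ one must use the left-compatibility $bb_{1}^{\ast}=b_{1}b^{\ast}$ of the acceptable set $B$ to compute $b(ab_{1})^{\ast} = b_{1}(ab_{1})^{\ast}b^{\ast} = a^{\ast}(b_{1}b^{\ast})$, and then invoke the order-ideal property to see $b_{1}b^{\ast}\in B$. This is the one axiom whose verification genuinely depends on the left-compatibility built into the definition of an acceptable set, and your plan as written would not produce it; you should flag it as a substantive step on a par with the completeness argument rather than a routine transfer.
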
  
\begin{proof} We first show that $\mathsf{R}(S)$ is a  right restriction monoid.
Using parts (1) and (3) of Lemma~\ref{lem:gervais}, the set $\mathsf{R}(S)$ is a semigroup under subset multiplication.
The set of all order ideals of $\mathsf{Proj}(S)$ is a set of idempotents (later, projections) for $\mathsf{R}(S)$.
Observe that the set of all projections, $1^{\downarrow}$, is an identity for $\mathsf{R}(S)$
which is therefore a monoid.
If we define a unary map on $\mathsf{R}(S)$ by $A \mapsto A^{\ast}$
then this is well-defined by part (2) of Lemma~\ref{lem:gervais}.
It remains to check that  $\mathsf{R}(S)$ is a right restriction monoid with respect to these operations.
The proofs of axioms (RR1), (RR2) and (RR3) are immediate.
To prove that axiom (RR4) holds, it is immediate that $A \subseteq AA^{\ast}$.
The proof that the reverse inclusion holds follows from the fact that $A$ is an order ideal.
The proof that axiom (RR5) holds is immediate.
It remains to show that axiom (RR6) holds.
It is immediate that $A^{\ast}B \subseteq B(AB)^{\ast}$ by axiom (RR6) which holds in $S$.
We now prove the reverse inclusion.
Let $b(ab_{1})^{\ast}$ be such that $b,b_{1} \in B$ and $a \in A$.
We shall prove that this is an element of $A^{\ast}B$.
Because $b,b_{1} \in B$, an acceptable set, 
we have that $bb_{1}^{\ast} = b_{1}b^{\ast}$.
We have that 
$$b(ab_{1})^{\ast} = b(ab_{1})^{\ast}b_{1}^{\ast} =  bb_{1}^{\ast}(ab_{1})^{\ast} = b_{1}b^{\ast}(ab_{1})^{\ast} =  b_{1}(ab_{1})^{\ast}b^{\ast}.$$
Thus
$$b(ab_{1})^{\ast} =  b_{1}(ab_{1})^{\ast}b^{\ast} = a^{\ast}b_{1}b^{\ast}.$$
But $b_{1}b^{\ast} \in B$ because $B$ is an order ideal.
We have therefore proved that $B(AB)^{\ast} \subseteq A^{\ast}B$.
This completes the proof that $\mathsf{R}(S)$ is a  right restriction monoid.

\begin{itemize}

\item Claim: if $A$ and $B$ are acceptable sets then $A \leq B$ in $\mathsf{R}(S)$ if and only if $A \subseteq B$.
We now prove the claim.
Suppose first that $A \leq B$.
By definition $A = BA^{\ast}$.
Let $a \in A$.
Then $a = bc^{\ast}$ where $b \in B$ and $c \in A$.
But $a \leq b$ and $b \in B$, an order ideal.
It follows that $a \in B$.
We have proved that $A \subseteq B$.
We now prove the converse.
Suppose that $A \subseteq B$.
We prove that $A = BA^{\ast}$.
Observe that $A \subseteq BA^{\ast}$.
Let $x \in BA^{\ast}$.
Then $x = bc^{\ast}$ where $c \in A$.
We have that $A \subseteq B$
and so $b \sim_{l} c$.
It follows that $bc^{\ast} = cb^{\ast}$.
Thus $x = cb^{\ast}$ and so $x \leq c$.
But $c \in A$ and $A$ is an order ideal and so $c \in A$, as required.

\item Claim: if $A$ and $B$ are acceptable sets then
$A \sim_{l} B$ in $\mathsf{R}(S)$ if and only if $A \cup B \in \mathsf{R}(S)$.
We now prove the claim.
Suppose first that $A \cup B \in \mathsf{R}(S)$.
We prove that $A \sim_{l} B$.
In fact, we shall prove that $AB^{\ast} \subseteq BA^{\ast}$ and then appeal to symmetry.
Let $ab^{\ast} \in AB^{\ast}$.
By assumption, $A \cup B$ is an acceptable set and so, in particular,
$ab^{\ast} = ba^{\ast}$.
It follows that $ab^{\ast} \in BA^{\ast}$. 
Suppose now that  $A \sim_{l} B$.
We shall prove that $A \cup B \in \mathsf{R}(S)$. 
It is enough to prove that if $a \in A$ and $b \in B$ then $a \sim_{l} b$.
We are given that $AB^{\ast} = BA^{\ast}$.
We have that $ab^{\ast} \in AB^{\ast}$ and so $ab^{\ast} = b_{1}a_{1}^{\ast}$ where $b_{1} \in B$ and $a_{1} \in A$.
By assumption, $a \sim_{l} a_{1}$ and $b \sim_{l} b_{1}$.
We claim that $ab^{\ast} \leq ba^{\ast}$ and symmetry delivers the result.
To prove the claim, we use the fact that  $ab^{\ast} = b_{1}a_{1}^{\ast}$.
Thus 
$$ab^{\ast} = b_{1}a_{1}^{\ast}b^{\ast}a^{\ast} = b_{1}b^{\ast}a_{1}^{\ast}a^{\ast} = bb_{1}^{\ast}(a_{1}^{\ast}a^{\ast}).$$

\end{itemize}

With these two results, we can now prove that $\mathsf{R}(S)$ is a {\em complete} right restriction monoid.
Let $\{A_{i} \colon i \in I\}$ be a left-compatible subset of $\mathsf{R}(S)$.
We claim that $A = \bigcup_{i \in I} A_{i} \in \mathsf{R}(S)$.
We therefore have to prove that $A$ is acceptable.
It is clearly an order ideal and so we have to show that any two elements of $A$ are left-compatible.
Without loss of generality, suppose that $a \in A_{i}$ and $b \in A_{j}$.
Then, by the above, $A_{i} \cup A_{j}$ is acceptable and so $a$ and $b$ are left-compatible.
We have proved that $A$ is an acceptable set.
Also, by what we proved above, we have that $A_{i} \leq A$ for any $i \in I$.
Suppose that $A_{i} \leq B$ for any $i \in I$ where $B$ is acceptable.
Then $A_{i} \subseteq B$ for any $i \in I$, by what we proved above.
Thus $A \subseteq B$ and so $A \leq B$.
We have therefore proved that $\mathsf{R}(S)$ has joins of left-compatible subsets.
Now, let $\{A_{i} \colon i \in I\}$ be a left-compatible subset of $\mathsf{R}(S)$ and let $B$ be any acceptable set.
We have to prove that 
$\left( \bigcup_{i \in I}  A_{i} \right)B  =   \bigcup_{i \in I}  A_{i}B$.
However, this is true on set-theoretic grounds alone.
This completes the proof that $\mathsf{R}(S)$ is a  complete right restriction monoid.

It remains to prove that the function $\iota$ is a homomorphism of right restriction monoids.
It is immediate that it is an embedding.
Observe that $a^{\downarrow}b^{\downarrow} = (ab)^{\downarrow}$;
this is true since if $a' \leq a$ and $b' \leq b$ then $a'b' \leq ab$
and if $x \leq ab$ then $x = a(bx^{\ast}) = a'b'$
where $a' = a$ and $b'= bx^{\ast} \leq b$. 
We therefore have a homomorphism of semigroups
which is also a monoid homomorphism since the identity of $\mathsf{R}(S)$ is $1^{\downarrow}$.
Thus we finish if we show that $\iota$ is a homomorphism of right restriction monoids.
This requires us to show that $\iota (a^{\ast}) = \iota (a)^{\ast}$.
Let $x \in \iota (a^{\ast})$.
Then $x \leq a^{\ast}$ and so is a projection.
Consider the element $ax$.
Because $x$ is a projection, we have that $ax \leq a$ and so $ax \in \iota (a)$.
But $(ax)^{\ast} = a^{\ast}x = x$.
It follows that $x \in \iota (a)^{\ast}$.
On the other hand, let $b^{\ast} \in \iota (a)^{\ast}$ where $b \leq a$.
It follows that $b^{\ast} \leq a^{\ast}$ and so $b^{\ast} \in \iota (a^{\ast})$. 
\end{proof}

The above procedure can be applied, {\em inter alia}, when $S$ is a Boolean inverse monoid.
This is the only case that will interest us in Section~5.

We can say more about the function $\iota \colon S \rightarrow \mathsf{R}(S)$
and so the construction of $\mathsf{R}(S)$. 

\begin{proposition}\label{prop:truss}
The map $\iota \colon S \rightarrow \mathsf{R}(S)$ is universal for right restriction monoid homomorphisms
to complete right restriction monoids.
\end{proposition}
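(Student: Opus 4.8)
The plan is to verify the universal property directly via the standard ``join of the image'' construction. Spelled out, universality means the following: for every homomorphism of right restriction monoids $\theta \colon S \rightarrow T$ into a complete right restriction monoid $T$, there is a unique homomorphism $\bar{\theta} \colon \mathsf{R}(S) \rightarrow T$ of complete right restriction monoids (that is, one preserving all left-compatible joins) satisfying $\bar{\theta} \iota = \theta$.

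First I would define the candidate extension. Given an acceptable set $A \in \mathsf{R}(S)$, its image $\theta (A)$ is a left-compatible subset of $T$, because $A$ is left-compatible and homomorphisms of right restriction monoids preserve left-compatibility; since $T$ is complete, the join $\bigvee_{a \in A} \theta (a)$ exists, and I set $\bar{\theta}(A) = \bigvee_{a \in A} \theta (a)$. That $\bar{\theta}$ extends $\theta$ is then immediate: $\bar{\theta}(a^{\downarrow}) = \bigvee_{x \leq a} \theta (x) = \theta (a)$, since $\theta$ is order preserving and $a \in a^{\downarrow}$; preservation of the identity follows in the same way from $1^{\downarrow}$ and $\theta (1) = 1$.

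Next I would check that $\bar{\theta}$ is a homomorphism of right restriction monoids. For the star operation, $\bar{\theta}(A^{\ast}) = \bigvee_{a \in A} \theta (a)^{\ast}$; this join exists because $A^{\ast}$ is acceptable, and part (2) of Lemma~\ref{lem:posy} identifies it with $\left( \bigvee_{a \in A} \theta (a) \right)^{\ast} = \bar{\theta}(A)^{\ast}$. The multiplicative property is the step I expect to carry the real computational weight. Here I would expand $\bar{\theta}(A)\bar{\theta}(B) = \left( \bigvee_{a \in A} \theta (a) \right) \left( \bigvee_{b \in B} \theta (b) \right)$, distributing the product over the left-compatible joins from the right (by the definition of completeness) and from the left (part (3) of Lemma~\ref{lem:posy}), to reach $\bigvee_{a \in A, b \in B} \theta (a)\theta (b) = \bigvee_{c \in AB} \theta (c) = \bar{\theta}(AB)$; all the intermediate joins exist because $AB$ is acceptable by Lemma~\ref{lem:gervais}. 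Preservation of arbitrary left-compatible joins then follows from $\bar{\theta}\left( \bigcup_{i} A_{i} \right) = \bigvee_{c \in \bigcup_{i} A_{i}} \theta (c) = \bigvee_{i} \bigvee_{a \in A_{i}} \theta (a) = \bigvee_{i} \bar{\theta}(A_{i})$, using associativity of joins in $T$ (recall that joins in $\mathsf{R}(S)$ are computed as unions).

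Finally I would establish uniqueness, which is where the concrete description of $\mathsf{R}(S)$ pays off. Any homomorphism $\psi$ of complete right restriction monoids with $\psi \iota = \theta$ must agree with $\bar{\theta}$: every acceptable set satisfies $A = \bigcup_{a \in A} a^{\downarrow}$, since $A$ is an order ideal, and this union is a left-compatible join in $\mathsf{R}(S)$, so preservation of such joins forces $\psi (A) = \bigvee_{a \in A} \psi (\iota (a)) = \bigvee_{a \in A} \theta (a) = \bar{\theta}(A)$. The only genuine subtlety running through the argument is the bookkeeping of which families are left-compatible and which joins therefore exist; this is handled uniformly by the identification of acceptable sets with left-compatible order ideals, together with the preservation of left-compatibility under $\theta$.
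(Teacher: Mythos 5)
Your proposal is correct and follows essentially the same route as the paper: define the extension by $A \mapsto \bigvee_{a \in A}\theta(a)$, verify it is a homomorphism using part (2) of Lemma~\ref{lem:posy} for the star and distributivity for the product, and derive uniqueness from $A = \bigcup_{a \in A}a^{\downarrow}$. The only difference is that you spell out the multiplicative step (which the paper declares immediate), and your bookkeeping of which joins exist is handled exactly as the paper does, via acceptability and preservation of left-compatibility.
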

\begin{proof} Let $T$ be a complete right restriction monoid and let $\alpha \colon S \rightarrow T$
be a monoid homorphism of right restriction monoids.
Define $\beta \colon \mathsf{R}(S) \rightarrow T$ by
$\beta (A) = \bigvee_{a \in A} \alpha (a)$.
This makes sense since the elements of $A$ are pairwise left-compatible
and left-compatibility is preserved by homomorphisms of right restriction semigroups.
We now calculate $\beta (\iota (a))$.
By definition this is  $\beta (a^{\downarrow})$ which is
$\bigvee_{x \leq a} \alpha (x)$.
Observe that $x \leq a$ implies that $\alpha (x) \leq \alpha(a)$.
It follows that  $\beta (\iota (a)) = \alpha (a)$.
We show that $\beta$ is a right restriction monoid homomorphism.
It is immediate from the definitions that this is a monoid homomorphism.
We need to prove that it is a homomorphism of right restriction semigroups.
Let $A$ be an acceptable set.
Then, by definition,
$$\beta (A^{\ast}) = \bigvee_{a^{\ast} \in A^{\ast}} \alpha (a^{\ast}).$$
But $\alpha$ is a homomorphism of right restriction semigroups.
Thus $\alpha (a^{\ast}) = \alpha (a)^{\ast}$.
Now apply part (2) of Lemma~\ref{lem:posy} to get
$$\beta (A^{\ast}) = \left( \bigvee_{a^{\ast} \in A^{\ast}} \alpha (a) \right)^{\ast}.$$
We now use the fact that $a \in A$ if and only if $a^{\ast} \in A^{\ast}$.
This gives us 
$$\beta (A^{\ast}) = \left( \bigvee_{a \in A} \alpha (a) \right)^{\ast} = \beta (A)^{\ast}.$$
We have therefore shown that $\beta$ is a homomorphism of right restriction semigroups.
We show that $\beta$ preserves arbitrary left-compatible joins.
Let $\{A_{i} \colon i \in I \}$ be a left-compatible set in $\mathsf{R}(S)$.
Put $A = \bigcup_{i \in I} A_{i}$, the join of the $A_{i}$ in  $\mathsf{R}(S)$.
By definition
$$\beta (A) = \bigvee_{a \in A} \alpha (a) = \bigvee_{a \in A_{i}, i \in I} \alpha (a) = \bigvee_{i \in I}\left( \bigvee_{a \in A_{i}} \alpha (a)\right).$$
But this is equal to
$$\bigvee_{i \in I} \beta (A_{i}).$$
We finish off by proving the categorical property we need. 
Observe that for any acceptable set $A$ we have that $A = \bigcup_{a \in A} a^{\downarrow}$, because $A$ is an order ideal.
It now follows that if $\beta'$ is a homomorphism of complete right restriction monoids 
such that $\alpha = \beta' \iota$ then $\beta' = \beta$.
\end{proof}

Proposition~\ref{prop:completion} tells us how to manufacture complete right restriction monoids
from monoids that are merely right restriction monoids. 

For the rest of this section, we shall work with an {\em arbitrary} complete right restriction monoid $S$.
We shall now describe some homomorphisms between complete right restriction monoids 
in terms of a generalization of the notion of a nucleus discussed in \cite{J}.
Let $\theta \colon S \rightarrow T$ be a surjective homomorphism
between complete right restriction monoids.
We shall say that $\theta$ is {\em projection pure}
if $\theta (a) \sim_{l} \theta (b)$ implies that $a \sim_{l} b$.
The terminology we use generalizes that to be found in the theory of pseudogroups \cite{LL}
where the notion of nucleus was first generalized.  

\begin{lemma}\label{lem:projection-pure-projection} Let $\theta \colon S \rightarrow T$ be a projection pure homomomorphism
of complete right restriction monoids.
If $\theta (a)$ is a projection then $a$ is a projection
\end{lemma}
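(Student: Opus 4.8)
The plan is to exploit the identity $\theta(a^{\ast}) = \theta(a)^{\ast}$, which collapses $a^{\ast}$ and $a$ to the same element of $T$ exactly when $\theta(a)$ is a projection, and then to pull this coincidence back through projection purity.

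First I would record that, since $\theta$ is a homomorphism of right restriction monoids, $\theta(a^{\ast}) = \theta(a)^{\ast}$. The hypothesis that $\theta(a)$ is a projection means $\theta(a)^{\ast} = \theta(a)$, so combining these gives $\theta(a^{\ast}) = \theta(a)$. Next I observe that any element is trivially left-compatible with itself, so $\theta(a) \sim_{l} \theta(a)$; rewriting one argument using the equality just obtained yields $\theta(a) \sim_{l} \theta(a^{\ast})$. This is precisely the sort of relation that projection purity is designed to reflect, so applying the definition of \emph{projection pure} to the pair $a$ and $a^{\ast}$ gives $a \sim_{l} a^{\ast}$.

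Finally I would unwind $a \sim_{l} a^{\ast}$ using the defining equation $x \sim_{l} y \iff xy^{\ast} = yx^{\ast}$, with $x = a$ and $y = a^{\ast}$. Using $(a^{\ast})^{\ast} = a^{\ast}$ from axiom (RR1) together with the fact that $a^{\ast}$ is idempotent (Lemma~\ref{lem:one}(1)), the left-hand side becomes $a(a^{\ast})^{\ast} = aa^{\ast} = a$ by axiom (RR4), while the right-hand side becomes $a^{\ast}a^{\ast} = a^{\ast}$. Hence $a = a^{\ast}$, that is, $a$ is a projection.

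I do not expect a genuine obstacle here: all the content lies in the single observation that projection purity converts the trivial compatibility $\theta(a) \sim_{l} \theta(a)$ into the substantive compatibility $a \sim_{l} a^{\ast}$, and the latter forces $a = a^{\ast}$ immediately. The only point requiring care is the correct expansion of the defining equation of $\sim_{l}$ and keeping track of which of the axioms (RR1) and (RR4) is invoked at each step.
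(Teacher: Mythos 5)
Your proposal is correct and follows exactly the same route as the paper's proof: observe that $\theta(a) \sim_{l} \theta(a^{\ast})$, invoke projection purity to deduce $a \sim_{l} a^{\ast}$, and conclude $a = a^{\ast}$. You simply fill in the routine expansions (via (RR1), (RR4) and idempotency of $a^{\ast}$) that the paper leaves implicit.
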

\begin{proof} Suppose that $\theta (a)$ is a projection.
Thus $\theta (a)^{\ast} = \theta (a)$.
Observe that $\theta (a) \sim_{l} \theta (a^{\ast})$.
Thus $a \sim_{l} a^{\ast}$, since $\theta$ is projection pure.
It follows from this that $a = a^{\ast}$. 
\end{proof}

Given a projection pure homomorphism 
$\theta \colon S \rightarrow T$ 
define
$\theta_{\ast} \colon T \rightarrow S$ 
by $\theta_{\ast}(t) = \bigvee_{\theta (s) \leq t} s$.
This makes sense because $\theta$ is projection pure.
This map has the following properties:
\begin{enumerate}
\item $\theta_{\ast}$ maps projections to projections.
\item $\theta_{\ast}$ is order preserving.
\item $\theta_{\ast}(t)\theta_{\ast}(t') \leq \theta_{\ast}(tt')$.
\item $s \leq \theta_{\ast} \theta (s)$.
\item $\theta \theta_{\ast}(t) \leq t$.
\end{enumerate}
We deduce from the above properties the following two equations
$$\theta = \theta \theta_{\ast} \theta \text{ and } \theta_{\ast} = \theta_{\ast} \theta \theta_{\ast}.$$
Define $\nu (s) = \theta_{\ast} \theta (s)$.
Then $\nu$ is a nucleus on $S$ in the following sense.
A function $\nu \colon S \rightarrow S$ is called a {\em nucleus} if it satisfies the following
six conditions:
\begin{itemize}
\item[{\rm (N1).}] $a \leq \nu (a)$.
\item[{\rm (N2).}] $a \leq b$ implies that $\nu (a) \leq \nu (b)$.
\item[{\rm (N3).}] $\nu^{2}(a) = \nu (a)$.
\item[{\rm (N4).}] $\nu (a) \nu (b) \leq \nu (ab)$.
\item[{\rm (N5).}] If $e$ is a projection then $\nu (e)$ is a projection.
\item[{\rm (N6).}] $\nu (a^{\ast}) = \nu (\nu (a)^{\ast})$.
\end{itemize}
The fact that (N6) holds follows from the fact that $\theta (a^{\ast}) = \theta (a)^{\ast}$.
We started with a projection pure homomorphism $\theta \colon S \rightarrow T$.
From this we constructed a nucleus $\nu$.
We shall show that $\nu$ contains all the information needed to reconstitute $T$
upto isomorphism. 

\begin{lemma}\label{lem:nucleus-properties} 
Let $\nu$ be a nuclus defined on the complete right restriction monoid $S$.
Then 
$$\nu (ab) = \nu (a \nu (b)) = \nu (\nu (a)b) = \nu (\nu (a) \nu (b)).$$
\end{lemma}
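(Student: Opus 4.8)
The plan is to prove the chain of equalities
$$\nu(ab) = \nu(a\nu(b)) = \nu(\nu(a)b) = \nu(\nu(a)\nu(b))$$
by establishing each as a pair of opposing inequalities, using only the six nucleus axioms (N1)--(N6) together with the monotonicity and idempotence that (N2) and (N3) provide. The whole argument should be a routine application of the axioms, so the "obstacle" is really just bookkeeping: making sure each inequality is driven by the right axiom and that I exploit $\nu^2 = \nu$ at the correct moment.

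First I would prove $\nu(ab) = \nu(a\nu(b))$. For the inequality $\nu(ab) \leq \nu(a\nu(b))$: by (N1) we have $b \leq \nu(b)$, hence $ab \leq a\nu(b)$ because multiplication is order preserving in a right restriction monoid (Lemma~\ref{lem:two}(5)); now apply (N2) to get $\nu(ab) \leq \nu(a\nu(b))$. For the reverse inequality $\nu(a\nu(b)) \leq \nu(ab)$, the key trick is to bound the inside term and then apply $\nu$ and collapse using (N3). Using (N1) on the first factor, $a \leq \nu(a)$, so $a\nu(b) \leq \nu(a)\nu(b)$, and then (N4) gives $\nu(a)\nu(b) \leq \nu(ab)$; chaining these, $a\nu(b) \leq \nu(ab)$. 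Applying (N2) yields $\nu(a\nu(b)) \leq \nu\nu(ab) = \nu(ab)$ by (N3). This closes the first equality, and by the symmetric argument (bounding $a \leq \nu(a)$ instead of $b \leq \nu(b)$) I get $\nu(ab) = \nu(\nu(a)b)$ in exactly the same way.

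Finally I would handle $\nu(ab) = \nu(\nu(a)\nu(b))$. The inequality $\nu(ab) \leq \nu(\nu(a)\nu(b))$ follows from (N1) applied to both factors, giving $ab \leq \nu(a)\nu(b)$, and then (N2). For the reverse, I use (N4) directly: $\nu(a)\nu(b) \leq \nu(ab)$, whence (N2) gives $\nu(\nu(a)\nu(b)) \leq \nu\nu(ab) = \nu(ab)$ by (N3). This establishes the outer equality directly; alternatively it follows by composing the two already-proved equalities, applying the first identity with $a$ replaced by $\nu(a)$. Either route works, and I would present whichever reads most cleanly. The one point to be careful about throughout is that every step bounding a product rests on multiplication being order preserving, which is exactly what Lemma~\ref{lem:two}(5) guarantees, so no extra hypotheses are needed beyond the nucleus axioms themselves.
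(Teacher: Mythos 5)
Your proof is correct and follows essentially the same route as the paper: both directions of each equality are obtained by sandwiching with (N1)/(N2) on one side and (N4)/(N2)/(N3) on the other. The paper simply proves the outer equality $\nu(ab)=\nu(\nu(a)\nu(b))$ explicitly and declares the remaining cases similar, whereas you spell out the inner cases as well; the underlying argument is identical.
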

\begin{proof} By (N1), we have that $a \leq \nu (a)$ and $b \leq \nu (b)$.
In particular, $ab \leq \nu (a)\nu (b)$.
Thus by (N2), we have that $\nu (ab) \leq \nu (\nu (a)\nu (b))$.
But by (N4), we have that $\nu (a)\nu (b) \leq  \nu(ab)$.
Thus by (N2), we have that $\nu (\nu (a)\nu (b)) \leq \nu^{2}(ab)$.
But by (N3), we have that  $\nu (\nu (a)\nu (b)) \leq \nu (ab)$.
We have therefore proved that 
$\nu (ab) = \nu(\nu (a) \nu (b))$.
The other cases are proved similarly. 
\end{proof}

Let $S$ be a complete right restriction monoid equipped with a nucleus $\nu$.
Define
$$S_{\nu} = \{a \in S \colon \nu (a) = a\},$$
the set of {\em $\nu$-closed} elements.
Define $\cdot$ on $S_{\nu}$ by
$$a \cdot b = \nu (ab).$$ 
The following result tells us exactly how to build a new complete right restriction monoid
from an old one equipped with a nucleus.

\begin{proposition}\label{prop:as} Let $\nu$ be a nucleus defined on the complete right restriction monoid $S$.
Then $(S_{\nu},\cdot)$ is also a complete right restriction monoid.
\end{proposition}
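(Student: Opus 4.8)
The plan is to verify, in turn, that $(S_\nu, \cdot)$ is a monoid, that it carries a well-defined star operation making it a right restriction monoid, and finally that it is complete. The guiding principle throughout is Lemma~\ref{lem:nucleus-properties}: because $\nu(ab) = \nu(\nu(a)\nu(b))$, the nucleus behaves like a closure operator compatible with multiplication, so most identities in $S_\nu$ follow by applying $\nu$ to the corresponding identity in $S$ and then using (N1)--(N3) to absorb or insert $\nu$'s as needed.

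First I would establish associativity of $\cdot$. For $a,b,c \in S_\nu$ we have $(a\cdot b)\cdot c = \nu(\nu(ab)c)$ and $a\cdot(b\cdot c) = \nu(a\nu(bc))$, and both equal $\nu(abc)$ by Lemma~\ref{lem:nucleus-properties}; this gives associativity in one stroke. For the identity, the natural candidate is $\nu(1)$. Since $1$ is a projection (as $1 = 1^\ast$ by Lemma~\ref{lem:one}(3)), (N5) makes $\nu(1)$ a projection, hence $\nu$-closed by (N3). One checks $a \cdot \nu(1) = \nu(a\nu(1)) = \nu(a\cdot 1) = \nu(a) = a$ using Lemma~\ref{lem:nucleus-properties} and that $a$ is $\nu$-closed, and symmetrically on the other side; so $\nu(1)$ is the identity of $S_\nu$.

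Next I would define the star operation on $S_\nu$. The obvious choice is $a^{\circledast} = \nu(a^\ast)$, which lands in $S_\nu$ by (N3); axiom (N6), $\nu(a^\ast) = \nu(\nu(a)^\ast)$, is exactly what guarantees this operation is well-behaved under $\nu$-closure. The axioms (RR1)--(RR6) must then be checked one at a time, each time translating the corresponding axiom from $S$ through $\nu$. The characteristic move is: given an identity $u = v$ in $S$, apply $\nu$ to both sides and rewrite $\nu$ of a product as $\nu$ of the product of the $\nu$-closures via Lemma~\ref{lem:nucleus-properties}; properties (N4) and (N5) handle the interaction of $\nu$ with products and projections. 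I expect (RR6), $t^{\circledast} a = a (ta)^{\circledast}$, to be the most delicate, since it mixes the star and the product most intricately, and one must be careful that the projections of $S_\nu$ (namely the $\nu$-closed projections) behave correctly; here (N5) ensures $\nu$ of a projection is again a projection, so $\mathsf{Proj}(S_\nu) = \{\nu(e) : e \in \mathsf{Proj}(S)\}$ consists of genuine projections.

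Finally I would verify completeness. The key observation is that a left-compatible family in $S_\nu$ is already left-compatible in $S$ (being left-compatible is a property of $S$ preserved downward), so the join $\bigvee_i a_i$ exists in $S$; the candidate join in $S_\nu$ is $\nu\!\left(\bigvee_i a_i\right)$, which is $\nu$-closed by (N3). I would show this is the least upper bound in $S_\nu$ using (N1), (N2), and the fact that any upper bound in $S_\nu$ is already $\nu$-closed. For right distributivity, $\left(\bigvee_i a_i\right) \cdot b = \nu\!\left(\left(\bigvee_i a_i\right)b\right) = \nu\!\left(\bigvee_i a_i b\right)$ using right distributivity in $S$, and this equals $\bigvee_i(a_i \cdot b)$ in $S_\nu$ after pushing $\nu$ through the join via (N2) and the description of joins just established. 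The main obstacle I anticipate is the bookkeeping in (RR6) together with confirming that the join computed via $\nu$ genuinely distributes over $\cdot$ from the right; everything else reduces fairly mechanically to Lemma~\ref{lem:nucleus-properties} and the nucleus axioms.
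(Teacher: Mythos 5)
Your overall strategy is the same as the paper's: associativity and the identity $\nu(1)$ via Lemma~\ref{lem:nucleus-properties}, the star $a^{\circledast} = \nu(a^{\ast})$ with (RR1)--(RR6) checked by pushing $\nu$ through products using (N1)--(N6), and joins of left-compatible families computed as $\nu\left(\bigvee_{i} a_{i}\right)$. Most of this matches the paper step for step and would go through.

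The one genuine gap is in the completeness argument, at the sentence claiming that a left-compatible family in $S_{\nu}$ is already left-compatible in $S$ because ``being left-compatible is a property of $S$ preserved downward.'' That parenthetical is not an argument. Left-compatibility of $a$ and $b$ in $S_{\nu}$ means $a \cdot b^{\circledast} = b \cdot a^{\circledast}$, which by Lemma~\ref{lem:nucleus-properties} amounts to $\nu(ab^{\ast}) = \nu(ba^{\ast})$, whereas left-compatibility in $S$ means $ab^{\ast} = ba^{\ast}$; the former does not formally imply the latter merely because the elements of $S_{\nu}$ live in $S$, since the two relations are defined using different operations. The missing argument (which the paper supplies) is: from $\nu(ab^{\ast}) = \nu(ba^{\ast})$ and (N1), both $ab^{\ast}$ and $ba^{\ast}$ lie below the common element $\nu(ab^{\ast})$, so by part (2) of Lemma~\ref{lem:spiked} they are left-compatible, i.e.\ $ab^{\ast}(ba^{\ast})^{\ast} = ba^{\ast}(ab^{\ast})^{\ast}$; since $(ab^{\ast})^{\ast} = a^{\ast}b^{\ast} = (ba^{\ast})^{\ast}$, axiom (RR4) then collapses this to $ab^{\ast} = ba^{\ast}$. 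Without this step the join $\bigvee_{i} a_{i}$ in $S$ on which your whole construction of the join in $S_{\nu}$ rests is not known to exist. A closely related point you leave implicit is that the natural partial order $\preceq$ of $S_{\nu}$ coincides with the restriction of $\leq$ from $S$ (for closed $a,b$ one checks $a \preceq b$ iff $a = \nu(ba^{\ast})$ iff $a = ba^{\ast}$, using (N2) for one direction); this identification is what legitimises your least-upper-bound argument for $\nu\left(\bigvee_{i} a_{i}\right)$ and the distributivity computation, and it should be recorded explicitly rather than assumed.
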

\begin{proof} By Lemma~\ref{lem:nucleus-properties}, $(S_{\nu},\cdot)$ is a semigroup.
It is, in fact, a monoid with identity $\nu (1)$
since
$a \cdot \nu (1) = \nu (a \nu (1))$
where $a \in S_{\nu}$.
By Lemma~\ref{lem:nucleus-properties},
we have that $\nu (a \nu (1)) = \nu (a1) = \nu (a) = a$.
We have proved that $\nu(1)$ is a right identity.
It is a left identity by symmetry.

We now prove that $S_{\nu}$ is a right restriction monoid.
Put $\mathsf{Proj} (S_{\nu}) = \{\nu (e) \colon e \in \mathsf{Proj}(S)\}$.
This is a set of projections of $S$ by axiom (N5).
Thus $\nu (1)$ is a projection.
If $a \in S_{\nu}$, define 
$$a^{\circ} = \nu (a^{\ast}).$$
This will be our `star' operation.
This is a projection by (N5).
We now show that the axioms for a right restriction semigroup hold.
Let $a,b \in S_{\nu}$.

\begin{itemize}

\item (RR1) holds:
$(a^{\circ})^{\circ} = \nu (\nu (a^{\ast})^{\ast}) = \nu (\nu (a^{\ast})) = \nu (a^{\ast}) = a^{\circ}$
by (N5) and (N3).

\item (RR2) holds: $(a^{\circ} \cdot b^{\circ})^{\circ} = (\nu(a^{\circ}b^{\circ}))^{\circ}$.
This is equal to $\nu (\nu(a^{\ast})\nu(b^{\ast}))^{\circ} = \nu (a^{\ast}b^{\ast})^{\circ}$ using Lemma~\ref{lem:nucleus-properties}.
This is equal to $\nu (\nu (a^{\ast}b^{\ast})^{\ast}) = \nu (a^{\ast}b^{\ast})$ using (N6).
Whereas, $a^{\circ} \cdot b^{\circ} = \nu (a^{\circ}b^{\circ}) = \nu (\nu (a^{\ast}) \nu (b^{\ast})) = \nu (a^{\ast}b^{\ast})$
using  Lemma~\ref{lem:nucleus-properties}.

\item (RR3) holds: $a^{\circ} \cdot b^{\circ} = \nu (a^{\circ}b^{\circ}) = \nu (b^{\circ}a^{\circ}) = b^{\circ} \cdot a^{\circ}$
where we have used (N5).

\item (RR4) holds: $a \cdot a^{\circ} = \nu (aa^{\circ}) = \nu (a\nu(a^{\ast})) = \nu (aa^{\ast}) = \nu (a) = a$ by Lemma~\ref{lem:nucleus-properties}.

\item (RR5) holds: $(a \cdot b)^{\circ} = \nu(\nu (ab)^{\ast}) = \nu ((ab)^{\ast})$ by (N6).
On the other hand, $(a^{\circ} \cdot b)^{\circ} = \nu ( \nu (\nu(a^{\ast})b)^{\ast}) = \nu (\nu (a^{\ast}b)^{\ast}) = \nu ((a^{\ast}b)^{\ast}) = \nu ((ab)^{\ast})$
by (N3) and (N6).

\item (RR6) holds:
$b^{\circ} \cdot a = \nu (b^{\circ}a) = \nu (\nu(b^{\ast})a) = \nu (b^{\ast}a)$ by Lemma~\ref{lem:nucleus-properties}.
Whereas
$a \cdot (b \cdot a)^{\circ} = \nu (a(b \cdot a)^{\circ}) = \nu (a \nu(ba)^{\circ}) = \nu (a \nu (\nu (ba)^{\ast})) = \nu (a(ba)^{\ast})$
by (N6) and  Lemma~\ref{lem:nucleus-properties}.
\end{itemize}

Thus, we have proved that $(S_{\nu},\cdot)$ is a right restriction monoid.
We now establish a number of claims.
\begin{itemize}

\item Let $a,b \in S_{\nu}$.
Denote the natural partial order on $S_{\nu}$ by $\preceq$.
Claim: $a \preceq b$ if and only if $a \leq b$.
Proof of claim.
Suppose first that $a \preceq b$.
Then $a = b \cdot a^{\circ}$.
Thus $a = \nu (ba^{\circ}) = \nu (b \nu(a^{\ast})) = \nu (ba^{\ast})$
using Lemma~\ref{lem:nucleus-properties}.
But $ba^{\ast} \leq b$ and so $\nu (ba^{\ast}) \leq \nu (b) = b$ by (N2).
Thus $a \leq b$.
Suppose now that $a \leq b$.
This means that $a = ba^{\ast}$.
Thus $a = \nu(ba^{\ast}) = \nu (b \nu (a^{\ast})) = b \cdot a^{\circ}$
by Lemma~\ref{lem:nucleus-properties}.
Whence we have proved that $a \preceq b$.

\item Claim:  $a \sim_{l} b$ in $S_{\nu}$ if and only if $a \sim_{l} b$ in $S$.
Proof of claim.
Suppose, first, that $a \sim_{l} b$ in $S_{\nu}$.
Then $a \cdot b^{\circ} = b \cdot a^{\circ}$.
This means that $\nu (a \nu (b^\ast)) = \nu (b \nu (a^{\ast}))$.
Consequently, we have that $\nu (ab^{\ast}) = \nu (ba^{\ast})$
by  Lemma~\ref{lem:nucleus-properties}.
But $ab^{\ast} \leq \nu (ab^{\ast})$ by (N1).
Similarly, $ba^{\ast} \leq  \nu (ba^{\ast})$.
It follows that $ab^{\ast} \sim_{l} ba^{\ast}$ by Lemma~\ref{lem:spiked}.
Thus $ab^{\ast}(ba^{\ast})^{\ast} = ba^{\ast}(ab^{\ast})^{\ast}$.
Whence $ab^{\ast} = ba^{\ast}$ and so $a \sim_{l} b$ in $S$.
Now, suppose that $a \sim_{l} b$ in $S$.
This means that $ab^{\ast} = ba^{\ast}$.
But $a \cdot \nu (b^{\ast}) = a \cdot b^{\circ}$
and $a \cdot \nu (b^{\ast}) = \nu (a \nu (b^{\ast})) = \nu (ab^{\ast})$ by Lemma~\ref{lem:nucleus-properties}.
It follows that $a \sim_{l} b$ in $S$.

\item Claim: if $X = \{a_{i} \colon i \in I\}$ is a left-compatible set in $S_{\nu}$
then the join in $S_{\nu}$ of $X$ exists, it is denoted by  $\bigsqcup_{i \in I} a_{i}$,
and is equal to $\nu\left(  \bigvee_{i \in I} a_{i} \right)$.
Proof of claim.
By the above, this is a left-compatible set in $S$.
It therefore has a join $a$ in $S$.
We claim that $\nu (a)$ is the join of $X$ in $S_{\nu}$.
It is an element of $S_{\nu}$ by (N2).
We have that $a_{i} \leq a$ for all $i$.
Thus  $a_{i} \leq \nu (a)$ for all $i$ by (N2).
Let $a_{i} \leq b$ for all $i$ where $b \in S_{\nu}$.
This means that $a_{i} \leq b$ in $S$.
Thus $a \leq b$.
It follows that $\nu (a) \leq b$ in $S_{\nu}$.
Thus the join of $X$ exists in $S_{\nu}$.
It follows that all joins of left-compatible subsets of $S_{\nu}$ exist.

\item Claim: $\nu \left( \bigvee_{i \in I} \nu (a_{i})  \right) = \nu (\bigvee_{i \in I} a_{i})$,
where the $a_{i}$ are arbitrary elements of $S$ which form a left-compatible set.
Proof of claim.
We have that $a_{i} \leq \bigvee_{i \in I} a_{i}$.
Thus  $\nu (a_{i}) \leq \nu \left( \bigvee_{i \in I} a_{i} \right)$ by (N2).
Thus  $\bigvee_{i \in I} \nu (a_{i}) \leq \nu \left( \bigvee_{i \in I} a_{i} \right)$.
Whence  $\nu \left(  \bigvee_{i \in I} \nu (a_{i}) \right) \leq \nu \left( \bigvee_{i \in I} a_{i} \right)$
using (N2).
To prove the reverse inequality,
we start with $a_{i} \leq \nu (a_{i})$ by (N1).
It follows that $\bigvee_{i \in I} a_{i} \leq \bigvee_{i \in I} \nu (a_{i})$ and so
$\nu \left( \bigvee_{i \in I} a_{i} \right) \leq \nu \left( \bigvee_{i \in I} \nu (a_{i}) \right)$.

\item Claim: $\left( \bigsqcup_{i \in I} a_{i} \right) \cdot b = \bigsqcup_{i \in I} a_{i} \cdot b$
in $S_{\nu}$.
Proof of claim.
We have that 
$\left( \bigsqcup_{i \in I} a_{i} \right) \cdot b = \nu \left( \bigvee_{i \in I} a_{i}b \right)$
and
$\bigsqcup_{i \in I} a_{i} \cdot b = \nu \left( \bigvee_{i \in I} \nu (a_{i}b)\right)$.
\end{itemize}

The result now follows by what we proved above.
\end{proof}

We can now show that nuclei give rise to all  projection pure homomorphisms.

\begin{theorem}
Let $\theta \colon S \rightarrow T$ be a surjective projection pure homomorphism.
Define the nucleus $\nu (s) = \theta_{\ast} \theta (s)$.
Then $T$ is isomorphic to $S_{\nu}$ as a complete right restriction monoid.
\end{theorem}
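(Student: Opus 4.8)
The plan is to realise the isomorphism concretely, using the restriction of $\theta$ together with $\theta_{\ast}$. Set $\psi = \theta|_{S_{\nu}} \colon S_{\nu} \to T$ and $\phi = \theta_{\ast} \colon T \to S_{\nu}$. First I would confirm that $\phi$ actually lands in $S_{\nu}$: for $t \in T$ we have $\nu(\theta_{\ast}(t)) = \theta_{\ast}\theta\theta_{\ast}(t) = \theta_{\ast}(t)$ using the identity $\theta_{\ast} = \theta_{\ast}\theta\theta_{\ast}$, so indeed $\theta_{\ast}(t) \in S_{\nu}$.

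Next I would prove that $\phi$ and $\psi$ are mutually inverse. For $a \in S_{\nu}$ we get $\phi\psi(a) = \theta_{\ast}\theta(a) = \nu(a) = a$ directly from the definition of $S_{\nu}$. For the other composite I would use surjectivity of $\theta$: given $t \in T$, choose $s$ with $\theta(s) = t$, and then $\psi\phi(t) = \theta\theta_{\ast}(t) = \theta\theta_{\ast}\theta(s) = \theta(s) = t$ by the identity $\theta = \theta\theta_{\ast}\theta$. This is the one and only place surjectivity is needed, and it is the step to treat most carefully.

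It then remains to check that $\psi$ respects all of the structure of a complete right restriction monoid, and each such check collapses via $\theta\theta_{\ast}\theta = \theta$. For products, $\psi(a \cdot b) = \theta(\nu(ab)) = \theta\theta_{\ast}\theta(ab) = \theta(ab) = \theta(a)\theta(b)$; for the identity, $\psi(\nu(1)) = \theta\theta_{\ast}\theta(1) = \theta(1) = 1$; for the restriction operation, $\psi(a^{\circ}) = \theta(\nu(a^{\ast})) = \theta\theta_{\ast}\theta(a^{\ast}) = \theta(a^{\ast}) = \theta(a)^{\ast}$. For joins I would take a left-compatible family $\{a_{i}\}$ in $S_{\nu}$; by the claims established in the proof of Proposition~\ref{prop:as} this family is left-compatible in $S$ and satisfies $\bigsqcup_{i} a_{i} = \nu\left(\bigvee_{i} a_{i}\right)$, so $\psi\left(\bigsqcup_{i} a_{i}\right) = \theta\theta_{\ast}\theta\left(\bigvee_{i} a_{i}\right) = \theta\left(\bigvee_{i} a_{i}\right) = \bigvee_{i} \theta(a_{i})$, the last step because $\theta$ preserves left-compatible joins.

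Finally, since $\psi$ is a bijective homomorphism of right restriction monoids whose inverse $\phi = \theta_{\ast}$ is order preserving, $\psi$ is an order isomorphism and therefore preserves and reflects joins in both directions; hence $\psi$ is an isomorphism of complete right restriction monoids, as required. I expect no serious obstacle: the substantive content has already been packaged into the nucleus identities and into Proposition~\ref{prop:as}, so the argument is largely a matter of invoking $\theta = \theta\theta_{\ast}\theta$ in the right places, the only genuinely load-bearing hypothesis being the surjectivity of $\theta$.
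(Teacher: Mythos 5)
Your proposal is correct and follows essentially the same route as the paper: both restrict $\theta$ to $S_{\nu}$ and verify it is an isomorphism using the identities $\theta = \theta\theta_{\ast}\theta$ and $\theta_{\ast} = \theta_{\ast}\theta\theta_{\ast}$, together with the description of $\bigsqcup$ from Proposition~\ref{prop:as}. The only cosmetic difference is that you exhibit the inverse explicitly as $\theta_{\ast}$, whereas the paper proves injectivity directly from the closedness of elements and surjectivity from $\theta = \theta\nu$.
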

\begin{proof} Define $\alpha \colon S_{\nu} \rightarrow T$ by $\alpha (s) = \theta (s)$.
Then $\alpha (s_{1} \cdot s_{2}) = \theta (\nu (s_{1}s_{2})) = \theta (s_{1}s_{2}) = \alpha (s_{1}) \alpha(s_{2})$.
Thus $\alpha$ is a semigroup homomorphism.
Suppose that $\alpha (s_{1}) = \alpha (s_{2})$.
Then $\theta (s_{1}) = \theta (s_{2})$.
It follows that $\nu (s_{1}) = \nu (s_{2})$.
Thus $s_{1} = s_{2}$, since both elements are assumed closed.
We have shown that $\alpha$ is an injective homomorphism.
Let $t \in T$.
Then since $\theta$ is assumed surjective, there is $s \in S$ such that $\theta (s) = t$.
But $\theta = \theta \nu$.
Thus $\theta (\nu (s)) = t$.
But $\nu (s)$ is closed.
We have shown that $\alpha$ is an isomorphism of semigroups.
Let $s$ be closed.
Then 
$$\alpha (s^{\circ}) = \alpha (\nu (s^{\ast})) = \theta (\nu (s^{\ast})) = \theta (s^{\ast}) = \theta (s)^{\ast} = \alpha (s)^{\ast}.$$
Thus $\alpha$ is an isomorphism of right restriction monoids.
We now refer the reader to Proposition~\ref{prop:as}.
Finally, suppose that $a_{i}$, where $i \in I$, is a set of left-compatible elements in $S_{\nu}$.
We calculate $\alpha (\bigsqcup_{i \in I} a_{i})$. 
But $\bigsqcup_{i \in I} a_{i} = \nu (\bigvee_{i \in I} a_{i})$.
It now follows that $\alpha$ preserves any left-compatible joins.
Thus $\alpha$ is an isomorphism of complete right restriction monoids.
\end{proof}

We are actually interested in constructing Boolean right restriction monoids.
The following concept is just what we need to cut down from arbitrary joins to finitary ones.
Let $S$ be a complete right restriction monoid.
An element $a \in S$ is said to be {\em finite} if
whenever $a \leq \bigvee_{i \in I} a_{i}$ then $a \leq \bigvee_{i=1}^{m}a_{i}$,
relabelling if necessary.\footnote{It might be better if we had said `compact' rather than `finite'. This technical meaning of the word `finite' is what we use below.}
Denote the set of finite elements of a complete right restriction monoid $S$ by $\mathsf{fin}(S)$.

\begin{lemma}\label{lem:klavin} Let $S$ be a complete right restriction monoid.
\begin{enumerate}
\item $a$ is finite if and only if $a^{\ast}$ is finite.
\item If $a$ and $b$ are finite and $a \sim_{l} b$ then $a \vee b$ is finite.
\item If the finite elements are closed under multiplication then they form a distributive right restriction semigroup.
\end{enumerate}
\end{lemma}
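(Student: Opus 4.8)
The plan is to prove the three parts in order, since part (3) will rely on parts (1) and (2). Throughout, the key tool is the defining property of finiteness together with Lemma~\ref{lem:posy} (which relates the star operation and products to joins) and Lemma~\ref{lem:klavin}'s earlier results on left-compatibility.

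For part (1), I would prove one direction and appeal to symmetry of the argument. Suppose $a$ is finite and suppose $a^{\ast} \leq \bigvee_{i \in I} a_{i}$. The idea is to pull this back to a statement about $a$ itself. Multiplying the inequality on the right by $a^{\ast}$ and using that $a^{\ast}$ is a projection, I get $a^{\ast} = \left( \bigvee_{i \in I} a_{i} \right) a^{\ast} = \bigvee_{i \in I} a_{i} a^{\ast}$ by the distributivity of multiplication over joins from the right. Now $a = a a^{\ast} = a \left( \bigvee_{i \in I} a_{i} a^{\ast} \right) = \bigvee_{i \in I} a a_{i} a^{\ast}$ using part (3) of Lemma~\ref{lem:posy} (distributivity from the left). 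Since $a$ is finite, finitely many of the $a a_{i} a^{\ast}$ suffice; taking stars and using part (2) of Lemma~\ref{lem:posy} to commute star past the join should then recover $a^{\ast} \leq \bigvee_{i=1}^{m} a_{i}^{\ast} \leq \bigvee_{i=1}^{m} a_{i}$, relabelling as needed. The converse follows by applying the forward direction to $a^{\ast}$ and using $(a^{\ast})^{\ast} = a^{\ast}$ together with (RR4).

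For part (2), assume $a,b$ are finite with $a \sim_{l} b$, so that $a \vee b$ exists. Suppose $a \vee b \leq \bigvee_{i \in I} c_{i}$. Then both $a \leq \bigvee_{i \in I} c_{i}$ and $b \leq \bigvee_{i \in I} c_{i}$, so by finiteness each is dominated by a finite subjoin; taking the union of the two finite index sets gives a single finite subjoin dominating both $a$ and $b$, hence dominating $a \vee b$. This part is essentially immediate from the definition.

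Part (3) is where the real content lies, and I expect it to be the main obstacle. Assuming $\mathsf{fin}(S)$ is closed under multiplication, I must verify that it is a distributive right restriction semigroup. The right restriction axioms (RR1)--(RR6) hold in $\mathsf{fin}(S)$ because they already hold in $S$; the only subtlety is that I must check $\mathsf{fin}(S)$ is closed under the star operation, which is exactly part (1), and closed under projections (the projection $a^{\ast}$ of a finite element is finite, again by part~(1)). The distributivity requirement is that each pair of left-compatible elements has a join inside $\mathsf{fin}(S)$ and that multiplication distributes over such binary joins from the right. The existence of binary left-compatible joins within $\mathsf{fin}(S)$ is precisely part~(2). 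Distributivity of multiplication over these binary joins from the right is inherited from $S$, once I know the relevant join computed in $S$ actually lands in $\mathsf{fin}(S)$, which part~(2) guarantees. I should also note that the projections of $\mathsf{fin}(S)$ form a distributive lattice with bottom, inherited from the projections of $S$, with the bottom being the finite projection $0$ (or the empty join). The delicate point to get right is ensuring that every join and product appealed to is genuinely the join or product computed in $\mathsf{fin}(S)$ rather than merely in $S$; since $\mathsf{fin}(S)$ is closed under both operations by hypothesis and by parts~(1) and~(2), the induced structure agrees with the restriction of the structure on $S$, and the distributive right restriction axioms transfer directly.
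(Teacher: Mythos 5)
Parts (2) and (3) of your proposal are correct and coincide with the paper's treatment (the paper in fact dismisses (3) with a one-line appeal to (1) and (2), and your extra care about joins and products in $\mathsf{fin}(S)$ agreeing with those in $S$ is welcome). The problems are both in part (1), precisely at the two places where you wave your hands. In the forward direction, your computation $a = \bigvee_{i \in I} a a_{i} a^{\ast}$ and the appeal to finiteness of $a$ are exactly the paper's argument, but your concluding chain $a^{\ast} \leq \bigvee_{i=1}^{m} a_{i}^{\ast} \leq \bigvee_{i=1}^{m} a_{i}$ fails at the second inequality: $a_{i}^{\ast} \leq a_{i}$ means $a_{i}^{\ast} = a_{i}(a_{i}^{\ast})^{\ast} = a_{i}$, i.e.\ it holds only when $a_{i}$ is a projection (in $\mathsf{PT}(X)$, $f^{\ast} \subseteq f$ forces $f$ to be an identity map). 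The correct bound does not pass through $a_{i}^{\ast}$ at all: since $a_{i}a^{\ast} \leq \bigvee_{j} a_{j}a^{\ast} = a^{\ast}$, each $a_{i}a^{\ast}$ is a projection by part (6) of Lemma~\ref{lem:two}, so by (RR5) one gets $(aa_{i}a^{\ast})^{\ast} = (a^{\ast}a_{i}a^{\ast})^{\ast} = a^{\ast}a_{i}a^{\ast}$, and then $a^{\ast}a_{i}a^{\ast} \leq a_{i}$ by parts (1) and (2) of Lemma~\ref{lem:two}, giving $a^{\ast} \leq \bigvee_{i=1}^{m} a_{i}$ directly. This is what the paper does.

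The second gap is the converse of part (1). ``Applying the forward direction to $a^{\ast}$'' yields only the tautology that if $a^{\ast}$ is finite then $(a^{\ast})^{\ast} = a^{\ast}$ is finite; it gives no information about $a$, and (RR4) does not rescue this. The converse needs its own argument, which is not symmetric to the forward one: given $a \leq \bigvee_{i \in I} b_{i}$, write $a = \bigvee_{i} b_{i}a^{\ast}$; since each $b_{i}a^{\ast} \leq a$ we have $b_{i}a^{\ast} = a(b_{i}a^{\ast})^{\ast} = ab_{i}^{\ast}a^{\ast}$; starring gives $a^{\ast} = \bigvee_{i} b_{i}^{\ast}a^{\ast}$ by part (2) of Lemma~\ref{lem:posy}; finiteness of $a^{\ast}$ extracts a finite subfamily; and multiplying on the left by $a$ recovers $a = \bigvee_{i=1}^{n} ab_{i}^{\ast}a^{\ast} = \bigvee_{i=1}^{n} b_{i}a^{\ast} \leq \bigvee_{i=1}^{n} b_{i}$. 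Both repairs are routine, but as written neither step of your part (1) goes through.
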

\begin{proof} (1) Suppose that $a$ is finite.
We prove that $a^{\ast}$ is finite.
Suppose that $a^{\ast} \leq \bigvee_{i \in I} b_{i}$.
Then from the definition of tha natural partial order, we have that
$a^{\ast} = \left( \bigvee_{i \in I} b_{i} \right)a^{\ast}$.
Thus $a^{\ast} = \bigvee_{i \in I} b_{i}a^{\ast}$.
By part (6) of Lemma~\ref{lem:two}, each $b_{i}a^{\ast}$ is a projection.
From axiom (RR4), we have that 
$a = \bigvee_{i \in I} ab_{i}a^{\ast}$.
But $a$ is finite.
Thus, relabelling if necessary,
$a = \bigvee_{i = 1}^{n} ab_{i}a^{\ast}$.
Now apply part (2) of Lemma~\ref{lem:posy}, to deduce that
$a^{\ast} = \bigvee_{i = 1}^{n} (ab_{i}a^{\ast})^{\ast}$.
But by axiom (RR5), we have that $(ab_{i}a^{\ast})^{\ast} = (a^{\ast}b_{i}a^{\ast})^{\ast}$.
But this is just $a^{\ast}b_{i}a^{\ast}$ since $b_{i}a^{\ast}$ is a projection.
Thus
$a^{\ast} = \bigvee_{i = 1}^{n} a^{\ast}b_{i}a^{\ast}$.
But $a^{\ast}b_{i}a^{\ast} \leq b_{i}$.
We have therefore proved that $a^{\ast}$ is finite.
We now prove the converse.
Suppose that $a^{\ast}$ is finite.
We prove that $a$ is finite.
Suppose that $a \leq \bigvee_{i \in I} b_{i}$.
From the definition of the natural partial order,
we have that $a = \bigvee_{i \in I} b_{i}a_{i}^{\ast}$.
Thus $b_{i}a^{\ast} \leq a$.
This means that $b_{i}a^{\ast} = ab_{i}^{\ast}a^{\ast}$.
From $a = \bigvee_{i \in I} b_{i}a_{i}^{\ast}$
we deduce that 
$a^{\ast} = \bigvee_{i \in I} b_{i}^{\ast}a_{i}^{\ast}$
by part (2) of Lemma~\ref{lem:posy}.
But $a^{\ast}$ is finite.
Thus, relabelling if necessary, we have that 
$a^{\ast} = \bigvee_{i = 1}^{n} b_{i}^{\ast}a_{i}^{\ast}$.
Multiply on both sides on the left by $a$ to obtain
$a = \bigvee_{i = 1}^{n} ab_{i}^{\ast}a_{i}^{\ast}$.
With what we said above, this proves that $a$ is finite.

(2) Suppose that $a$ and $b$ are both finite and $a \sim_{l} b$.
We prove that $a \vee b$ is finite.
Suppose that $a \vee b \leq \bigvee_{i \in I} a_{i}$.
Since $a \leq a \vee b$ there is a finite subset $I_{1}$ of $I$ such that $a \leq \bigvee_{i \in I_{1}} a_{i}$.
Likewise, there is a finite subset $I_{2}$ of $I$ such that $b \leq \bigvee_{i \in I_{2}} a_{i}$.
It follows that $a \vee b \leq \bigvee_{i \in I_{1} \cup I_{2}} a_{i}$,
and so $a \vee b$ is finite.

(3) This now follows from (1) and (2) above. 
\end{proof}

There is no guarantee that the product of finite elements is finite
but, as we shall see, this will hold in the case of interest to us.

\section{The structure of \'etale Boolean right restriction monoids}

The goal of this section is to show how to construct \'etale Boolean right restriction monoids from Boolean inverse monoids \cite{Wehrung}.

\begin{remark}{\em Observe that if $S$ and $T$ are isomorphic as inverse semigroups,
then the partially ordered sets $(S,\leq)$ and $(T,\leq)$ are order isomomorphic.
We shall use this observation below in the course of the proof of our first theorem.}
\end{remark}

Our first theorem below shows that the structure of an  \'etale Boolean right restriction monoid is completely
determined by the structure of its Boolean inverse monoid of partial units.
We have rephrased this result slightly at the suggestion of the referee.

\begin{theorem}\label{them:one} Let $S$ and $T$ be \'etale Boolean right restriction monoids.
\begin{enumerate}

\item Every homomorphism of Boolean inverse monoids $\theta \colon \mathsf{Inv}(S) \rightarrow \mathsf{Inv}(T)$ 
induces a homomorphism of Boolean right restriction monoids $\phi \colon S \rightarrow T$ which is the unique extension of $\theta$.

\item If $\mathsf{Inv}(S) \cong \mathsf{Inv}(T)$ then $S \cong T$ as right restriction semigroups.

\end{enumerate}
\end{theorem}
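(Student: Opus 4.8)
The plan is to build $\phi$ by decomposing elements into partial units. Since $S$ is \'etale, every $s \in S$ can be written as a finite join $s = \bigvee_{i=1}^{m} a_{i}$ with each $a_{i} \in \mathsf{Inv}(S)$, and by Lemma~\ref{lem:spiked}(2) the $a_{i}$ (all lying below $s$) are pairwise left-compatible. I would set $\phi(s) = \bigvee_{i=1}^{m} \theta(a_{i})$; this join exists in $T$ because homomorphisms preserve left-compatibility and $T$ is distributive. The entire theorem then rests on one point, which I expect to be the main obstacle: that $\phi$ is \emph{well defined}, i.e.\ that $\bigvee_{i} \theta(a_{i})$ does not depend on the chosen decomposition of $s$.

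To settle well-definedness I would argue as follows. Suppose $s = \bigvee_{i=1}^{m} a_{i} = \bigvee_{j=1}^{n} c_{j}$ with all $a_{i}, c_{j} \in \mathsf{Inv}(S)$, and fix $i$. Since $a_{i} \leq s$, the meet $a_{i} \wedge s$ exists and equals $a_{i}$, so Lemma~\ref{lem:posy}(4) gives $a_{i} = a_{i} \wedge \bigvee_{j} c_{j} = \bigvee_{j} (a_{i} \wedge c_{j})$. Each $a_{i} \wedge c_{j} \leq a_{i}$ is then a partial unit by the order-ideal property (Lemma~\ref{lem:holdo}(5)), and the family $\{a_{i} \wedge c_{j}\}_{j}$, lying below $a_{i}$ inside the inverse monoid $\mathsf{Inv}(S)$, is compatible there; its join in $\mathsf{Inv}(S)$ coincides with the join $a_{i}$ computed in $S$ (as in the proof of Lemma~\ref{lem:sleet}). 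Because $\theta$ preserves finite compatible joins, $\theta(a_{i}) = \bigvee_{j} \theta(a_{i} \wedge c_{j})$, and each term satisfies $\theta(a_{i} \wedge c_{j}) \leq \theta(c_{j}) \leq \bigvee_{k} \theta(c_{k})$. Hence $\theta(a_{i}) \leq \bigvee_{k} \theta(c_{k})$ for every $i$, so $\bigvee_{i} \theta(a_{i}) \leq \bigvee_{k} \theta(c_{k})$; by symmetry the reverse inequality holds, yielding equality.

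With $\phi$ well defined, the rest of part (1) should be routine. That $\phi$ extends $\theta$ follows from the one-term decomposition of a partial unit. For multiplicativity I would write $s = \bigvee_{i} a_{i}$, $t = \bigvee_{j} b_{j}$; distributing on both sides via Lemma~\ref{lem:posy}(3) gives $st = \bigvee_{i,j} a_{i} b_{j}$, a decomposition into partial units by Lemma~\ref{lem:holdo}(1), so $\phi(st) = \bigvee_{i,j} \theta(a_{i})\theta(b_{j}) = \phi(s)\phi(t)$. Preservation of star uses $s^{\ast} = \bigvee_{i} a_{i}^{\ast}$ with Lemma~\ref{lem:posy}(2), and preservation of finite left-compatible joins follows by concatenating decompositions. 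Uniqueness is then immediate: any homomorphism of Boolean right restriction monoids restricting to $\theta$ on $\mathsf{Inv}(S)$ must send $\bigvee_{i} a_{i}$ to $\bigvee_{i} \theta(a_{i})$, since it preserves finite left-compatible joins and, by Lemma~\ref{lem:functor}, does restrict to a homomorphism of Boolean inverse monoids.

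For part (2) I would deduce the statement from part (1). Given an isomorphism $\theta \colon \mathsf{Inv}(S) \rightarrow \mathsf{Inv}(T)$, part (1) yields extensions $\phi \colon S \rightarrow T$ of $\theta$ and $\psi \colon T \rightarrow S$ of $\theta^{-1}$. Then $\psi\phi \colon S \rightarrow S$ is a homomorphism of Boolean right restriction monoids restricting to $\theta^{-1}\theta = \mathrm{id}$ on $\mathsf{Inv}(S)$, so the uniqueness just established forces $\psi\phi = \mathrm{id}_{S}$; symmetrically $\phi\psi = \mathrm{id}_{T}$. Hence $\phi$ is an isomorphism and $S \cong T$.
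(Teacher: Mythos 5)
Your proof is correct, and part (1) follows essentially the paper's own argument: the same decomposition into partial units, the same well-definedness step via $a_{i} = \bigvee_{j}(a_{i}\wedge c_{j})$ and Lemma~\ref{lem:posy}. The only variation there is that you conclude $\theta(a_{i}) \leq \bigvee_{k}\theta(c_{k})$ directly from monotonicity of $\theta$ applied to $a_{i}\wedge c_{j} \leq c_{j}$, whereas the paper first shows the meet $a_{i}\wedge b_{j}$ is \emph{algebraic} (via Lemma~\ref{lem:meets}) so that $\theta(a_{i}\wedge b_{j}) = \theta(a_{i})\wedge\theta(b_{j})$, and then invokes Lemma~\ref{lem:posy} again; your shortcut is legitimate and slightly leaner. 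Where you genuinely diverge is part (2): the paper verifies directly that $\phi$ is injective (by showing $\phi(a)=\phi(b)$ forces $a\leq b$ and appealing to symmetry) and surjective (by lifting a decomposition of an element of $T$ through the bijection $\theta$), whereas you apply part (1) to $\theta^{-1}$ to obtain $\psi$, and use the uniqueness clause to force $\psi\phi = \mathrm{id}_{S}$ and $\phi\psi = \mathrm{id}_{T}$. This categorical route is cleaner and makes the uniqueness statement in part (1) do real work (it is also exactly the mechanism behind the equivalence of categories mentioned in Remark~\ref{rem:functor}); the paper's direct verification buys a self-contained argument that does not rely on having first proved uniqueness in full strength. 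Both are valid.
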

\begin{proof} (1) 
Let $a \in S$. 
Then, under the assumption that $S$ is \'etale, we may write 
$$a = \bigvee_{i=1}^{m} a_{i},$$
where the $a_{i}$ are partial units such that $a_{i} \sim_{l} a_{j}$.
From $a_{i} \sim_{l} a_{j}$, it follows that $\theta (a_{i}) \sim_{l} \theta (a_{j})$
and so $\bigvee_{i=1}^{m} \theta (a_{i})$ is defined in $T$.
This means that we must define 
$$\phi (a) = \bigvee_{i=1}^{m} \theta (a_{i}).$$
Observe that this establishes uniqueness.
Of course, this appears to depend on our choice of partial units $a_{i}$.
We prove that this is not the case.
Accordingly, suppose that $a = \bigvee_{j=1}^{n} b_{j}$,
where the $b_{j}$ are partial units.
Then
$$\bigvee_{i=1}^{m} a_{i} =  \bigvee_{j=1}^{n} b_{j}.$$
Thus $a_{i} \leq  \bigvee_{j=1}^{n} b_{j}$.
It follows by Lemma~\ref{lem:posy}, that
$a_{i} =  \bigvee_{j=1}^{n} a_{i} \wedge b_{j}$.
Now, $a_{i} \wedge b_{j}$ is a partial unit by Lemma~\ref{lem:sleet}.
In addition, $a_{i} \wedge b_{j} \leq a_{i}$.
It follows that these elements are pairwise compatible (not merely left-compatible).
Thus the join is an honest-to-goodness join in $\mathsf{Inv}(S)$.
It follows that $\theta (a_{i}) =  \bigvee_{j=1}^{n} \theta (a_{i} \wedge b_{j})$.
But the elements $a_{i}$ and $b_{j}$ are partial units and we know that they are $\sim_{l}$-related
(because they are all below $a$).
It follows that the meet $a_{i} \wedge b_{j}$ is algebraic by
Lemma~\ref{lem:meets} and the remark which follows.
Thus $\theta (a_{i} \wedge b_{j}) = \theta (a_{i}) \wedge \theta (b_{j})$
and so $\theta (a_{i}) =  \bigvee_{j=1}^{n} \theta (a_{i}) \wedge \theta (b_{j})$.
By Lemma~\ref{lem:posy}, we deduce that $\theta (a_{i}) \leq \bigvee_{j=1}^{n} \theta (b_{j})$.
Thus $\bigvee_{i=1}^{m} \theta (a_{i}) \leq  \bigvee_{j=1}^{n} \theta (b_{j})$.
By symmetry we get equality.
This proves that $\phi$ is a well-defined function extending $\theta$.
The fact that $\phi$ is a semigroup homomorphism follows from the observation that if
$a = \bigvee_{i=1}^{m} a_{i}$ and $b = \bigvee_{j=1}^{n}b_{j}$,
where the $a_{i}$ and $b_{j}$ are partial units,
then $ab = \bigvee_{i,j} a_{i}b_{j}$,
where each product $a_{i}b_{j}$ is a partial unit by Lemma~\ref{lem:sleet}.
Suppose that $a = \bigvee_{i=1}^{m}a_{i}$, where the $a_{i}$ are partial units.
Then we claim that $a^{\ast} =  \bigvee_{i=1}^{m}a_{i}^{-1}a_{i}$.
From 
$a = \bigvee_{i=1}^{m}a_{i}$
we get that
$a^{\ast} = \bigvee_{i=1}^{m}a_{i}^{\ast}$
by part (2) of Lemma~\ref{lem:posy}. 
But
$a_{i}^{\ast} = a_{i}^{-1}a_{i}$
by Lemma~\ref{lem:spilled-tea} and the sentences which follow.
We have proved that $\phi$ is a homorphism of right restriction monoids.

(2) Suppose, now, that $\theta$ is a bijection.
We prove that $\phi$ is a bijection.
Let $a,b \in S$ such that $\phi (a) = \phi(b)$.
We can write
$$a = \bigvee_{i=1}^{m} a_{i} \text{ and } b = \bigvee_{j=1}^{n} b_{j},$$
where the $a_{i}$ and $b_{j}$ are partial units.
By assumption
$$\bigvee_{i=1}^{m} \theta (a_{i}) = \bigvee_{j=1}^{n} \theta (b_{j}).$$
Thus $\theta (a_{i}) \leq  \bigvee_{j=1}^{n} \theta (b_{j})$.
It follows by Lemma~\ref{lem:posy},
that 
$\theta (a_{i}) =  \bigvee_{j=1}^{n} \theta (a_{i}) \wedge \theta (b_{j})$.
The element $\theta (a_{i}) \wedge \theta (b_{j})$ is a partial unit by Lemma~\ref{lem:holdo}.
Since $\theta$ is an isomorphism (and so an order isomorphism)
it follows that $a_{i} \wedge b_{j}$ exists and
$\theta (a_{i}) \wedge \theta (b_{j}) = \theta (a_{i} \wedge b_{j})$.
Again, since $\theta$ is an isomorphism
$\bigvee_{j=1}^{n} \theta (a_{i} \wedge b_{j}) = \theta \left(\bigvee_{j=1}^{n} a_{i} \wedge b_{j} \right)$.
It follows that $a_{i} =  \bigvee_{j=1}^{n} a_{i} \wedge b_{j}$.
Whence $a_{i} \leq b$.
This means that $a \leq b$.
By symmetry we have that $a = b$ and we have proved that $\phi$ is injective.
We now prove that $\phi$ is surjective.
Let $a' \in T$.
Then $a' = \bigvee_{i=1}^{m} a_{i}'$
where the $a_{i}'$ are partial units which are pairwise left-compatible.
Let $a_{i}$ be the partial unit of $S$ such that $\theta (a_{i}) = a_{i}'$.
The elements $a_{i}$ are also left-compatible.
Put $a = \bigvee_{i=1}^{m} a_{i}$.
Then, by construction, $\phi (a) = a'$. 
We have therefore proved that $\phi$ is an isomorphism of right restriction monoids.
\end{proof}

Theorem~\ref{them:one} is purely theoretical in that it shows that Boolean inverse monoids 
determine the structure of \'etale Boolean right restriction monoids.
We now show how to actually construct all \'etale Boolean right restriction monoids
directly from Boolean inverse monoids. 
We shall apply the theory developed in Section~4. 

We begin with some motivation.
Let $S$ be a Boolean inverse monoid.
For each $a \in S$ define $[a] = a^{\downarrow} \cap \mathsf{Inv}(S)$.

\begin{lemma}\label{lem:regis} Let $S$ be an \'etale Boolean right restriction monoid.
\begin{enumerate}
\item The set $[a]$ is acceptable.

\item The finite compatible join of partial units in $[a]$ is still in $[a]$.

\item There is a finite set of left-compatible partial units $\{b_{1},\ldots ,b_{n}\}$
in $[a]$ such that if $c \in [a]$ 
then $c$ is a compatible join of partial units in  $\{b_{1},\ldots ,b_{n}\}^{\downarrow}$.

\item $[a] = [b]$ implies that $a = b$.

\end{enumerate}
\end{lemma}
\begin{proof} (1) $a^{\downarrow}$ is an order ideal by construction,
and $\mathsf{Inv}(S)$ is an order ideal by part (2) of Lemma~\ref{lem:sleet}.
Thus  $[a]$ is an order ideal.
The fact that this set is left-compatible follows by Lemma~\ref{lem:spiked}.

(2) Suppose that $\{a_{1},\ldots, a_{n}\}$ is a compatible set of partial units each less
than or equal to $a$.
Then their join is less than or equal to $a$.
This join is a partial unit by Lemma~\ref{lem:sleet}.

(3) By the definition of \'etale, we may write $a = \bigvee_{i=1}^{n} b_{i}$,
where the $b_{i}$ are a finite left-compatible set of partial units.
Let $c$ be any partial unit $c \leq a$.
Then $c = \bigvee_{i=1}^{n} c \wedge b_{i}$ by Lemma~\ref{lem:posy}.
By part (2) of Lemma~\ref{lem:sleet}, each  $c \wedge b_{i}$ is a partial unit
and $c \wedge b_{i} \leq b_{i}$.

(4) We prove first that $a \leq b$;
the result then follows by symmetry.
By defintion, $a = \bigvee_{i=1}^{m} a_{i}$ where the $a_{i} \in a^{\downarrow} \cap \mathsf{Inv}(S)$
and so are elements of $b^{\downarrow} \cap \mathsf{Inv}(S)$.
It follows that $a_{i} \leq b$ from which it follows that $a \leq b$.
\end{proof}

We have therefore proved that the sets $[a]$ are special elements of $\mathsf{R}(S)$.
We shall prove that they are precisely the finite elements which are closed with respect to a suitable nucleus.

\begin{quote}
{\em Our goal is to do the following: given a Boolean inverse monoid $S$,
we shall construct an \'etale Boolean right restriction monoid $\mathsf{Etale}(S)$,
called the {\em (right restriction) companion} of $S$, such that $S \cong \mathsf{Inv}(\mathsf{Etale}(S))$.}
\end{quote}

Let $X \subseteq S$. 
Define $X^{\vee}$ to be the set of all {\em compatible}
finite joins of elements of $X$.
We say that $X$ is {\em closed} if $X = X^{\vee}$.

\begin{lemma}\label{lem:coffee} 
Let $S$ be a Boolean inverse monoid.
\begin{enumerate}

\item  If $A$ is acceptable in $S$ then $A^{\vee}$ is acceptable in $S$ and $A \subseteq A^{\vee}$.

\item The function $A \mapsto A^{\vee}$ is a nucleus on  $\mathsf{R}(S)$.

\end{enumerate}
\end{lemma}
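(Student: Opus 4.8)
The plan is to prove (1) first and then feed it into the verification of the six nucleus axioms in (2). Throughout I will exploit that $S$ is a Boolean inverse monoid, so that finite compatible joins exist and both multiplication and inversion distribute over them, and that in the inverse setting $a^{\ast} = a^{-1}a$. I will use repeatedly the standard fact that two elements lying below a common element are compatible (the inverse-semigroup strengthening of Lemma~\ref{lem:spiked}), together with Lemma~\ref{lem:yakult}, which reduces left-compatibility of $x,y$ to the statement that $xy^{-1}$ is idempotent.

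For (1), the inclusion $A \subseteq A^{\vee}$ is immediate since each $a \in A$ is the singleton compatible join of itself. To see that $A^{\vee}$ is an order ideal, I take $x \leq y$ with $y = \bigvee_i a_i \in A^{\vee}$, where the $a_i \in A$ are compatible. Writing $x = y\mathbf{d}(x)$ and distributing the product over the join gives $x = \bigvee_i a_i \mathbf{d}(x)$; each $a_i \mathbf{d}(x) \leq a_i$ lies in $A$ because $A$ is an order ideal, and these restrictions remain compatible, so $x \in A^{\vee}$. For left-compatibility, I take $x = \bigvee_i a_i$ and $y = \bigvee_j b_j$ in $A^{\vee}$; distributing inversion and multiplication gives $xy^{-1} = \bigvee_{i,j} a_i b_j^{-1}$, and since $a_i, b_j \in A$ are left-compatible, each $a_i b_j^{-1}$ is idempotent by Lemma~\ref{lem:yakult}; a compatible join of idempotents is again idempotent, so $xy^{-1}$ is idempotent and hence $x \sim_l y$ by Lemma~\ref{lem:yakult}. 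This shows $A^{\vee}$ is acceptable.

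For (2), I will check (N1)--(N6) for $\nu(A) = A^{\vee}$ on $\mathsf{R}(S)$, recalling that there the order is inclusion, the product is the subset product (an order ideal by Lemma~\ref{lem:gervais}), star is $A \mapsto A^{\ast}$, and the projections are the order ideals of idempotents. Axioms (N1) and (N2) are immediate from part (1) and from the observation that a compatible join of elements of $A$ is also one of elements of any $B \supseteq A$. For (N3) I will show $(A^{\vee})^{\vee} \subseteq A^{\vee}$: an element of $(A^{\vee})^{\vee}$ is $x = \bigvee_k c_k$ with $c_k = \bigvee_l a_{k,l} \in A^{\vee}$, so $x = \bigvee_{k,l} a_{k,l}$; since every $a_{k,l} \leq x$, the whole family is pairwise compatible, so $x \in A^{\vee}$. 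For (N4), given $x \in A^{\vee}$ and $y \in B^{\vee}$, distributing the product shows $xy = \bigvee_{i,j} a_i b_j$ is a compatible join of elements of $AB$, so $A^{\vee}B^{\vee} \subseteq (AB)^{\vee}$. For (N5), a projection $E$ is an order ideal of idempotents, and any compatible join of idempotents is idempotent, so $E^{\vee}$ is again an order ideal of idempotents.

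The main work, and the step I expect to be the crux, is (N6), which asserts $(A^{\ast})^{\vee} = ((A^{\vee})^{\ast})^{\vee}$. The key computation is that for $x = \bigvee_i a_i \in A^{\vee}$ one has $x^{\ast} = \bigvee_i a_i^{\ast}$ by part (2) of Lemma~\ref{lem:posy}, exhibiting each element of $(A^{\vee})^{\ast}$ as a compatible join of elements of $A^{\ast}$; hence $(A^{\vee})^{\ast} \subseteq (A^{\ast})^{\vee}$, and applying $\nu$ together with (N3) gives $((A^{\vee})^{\ast})^{\vee} \subseteq (A^{\ast})^{\vee}$. The reverse inclusion follows from $A^{\ast} \subseteq (A^{\vee})^{\ast}$ (because $A \subseteq A^{\vee}$) via (N2). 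The subtlety threaded through the whole argument is the need to keep the inverse-semigroup notion of full compatibility (used to form the joins defining $X^{\vee}$ inside $S$) aligned with the left-compatibility governing $\mathsf{R}(S)$; this is handled uniformly by noting that summands of an existing join, and more generally elements below a common bound, are automatically compatible.
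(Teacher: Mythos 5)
Your proof is correct and follows essentially the same route as the paper's: the same use of Lemma~\ref{lem:yakult} together with the closure of idempotents under finite joins to get left-compatibility of $A^{\vee}$, and the same appeal to part (2) of Lemma~\ref{lem:posy} for (N6). The only cosmetic differences are that you obtain the order-ideal property via $x = \bigvee_{i} a_{i}x^{\ast}$ rather than the meets $a_{i} \wedge x$ of Lemma~\ref{lem:posy}, and you package (N6) as the inclusion $(A^{\vee})^{\ast} \subseteq (A^{\ast})^{\vee}$ followed by idempotence of $\nu$ instead of unwinding the double join directly.
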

\begin{proof} (1) We prove that $A^{\vee}$ is acceptable.
We prove first that  $A^{\vee}$ is an order ideal.
A typical element of  $A^{\vee}$
has the form $\bigvee_{i = 1}^{m} a_{i}$ 
where $\{a_{1}, \ldots, a_{m}\}$ is a compatible subset of $A$.
Suppose that $x \leq \bigvee_{i = 1}^{m} a_{i}$.
So, by Lemma~\ref{lem:posy}, we have that  
$x = \bigvee_{i = 1}^{m} a_{i} \wedge x$.
Because $A$ is an order ideal, we have that $a_{i} \wedge x \in A$.
Thus $x \in A^{\vee}$.

We prove next that  $A^{\vee}$ is left-compatible.
Suppose that $x,y \in A^{\vee}$.
Then $x = \bigvee_{i=1}^{m}x_{i}$ and $y = \bigvee_{j=1}^{n} y_{j}$ 
where $x_{i}, y_{j} \in A$.
We are working inside a Boolean inverse monoid and so
$y^{-1} = \bigvee_{j=1}^{n} y_{j}^{-1}$. 
By assumption, $x_{i} \sim_{l} y_{j}$ for all $i$ and $j$.
Thus by Lemma~\ref{lem:yakult},
we have that $x_{i}y_{j}^{-1}$ is an idempotent.
But $xy^{-1} = \bigvee_{i,j}x_{i}y_{j}^{-1}$.
In a Boolean inverse monoid the finite join of idempotents is an idempotent and so $xy^{-1}$ is an idempotent.
By Lemma~\ref{lem:yakult},
we have proved that $x$ and $y$ are left-compatible.
Thus $A^{\vee}$ is acceptable.
It is immediate that $A \subseteq A^{\vee}$.

(2) The function is well-defined by part (1) above.
The proof that the properties (N1), (N2) and (N3) of a nucleus hold are immediate.
It is easy to verify that (N4) and (N5) hold.
We prove that (N6) holds.
Only one direction needs proving.
Let $x \in ((A^{\vee})^{\ast})^{\vee}$.
Then $x = \bigvee_{i=1}^{m} x_{i}$
where $x_{i} \in (A^{\vee})^{\ast}$.
Now, each $x_{i} = (\bigvee_{j_{i}} y_{j_{i}})^{\ast}$ where $y_{j_{i}} \in A$.
By part (2) of Lemma~\ref{lem:posy},
we have that 
$x_{i} = \bigvee_{j_{i}} y_{j_{i}}^{\ast}$.
Thus $x = \bigvee_{i}\bigvee_{j_{i}} y_{j_{i}}^{\ast}$ where $y_{j_{i}} \in A$.
Whence $x \in (A^{\ast})^{\vee}$.
\end{proof}

Now define $\nu (A) = A^{\vee}$.
Then we have the following by Proposition~\ref{prop:as} and Lemma~\ref{lem:coffee}.

\begin{lemma}\label{lem:rubin} Let $S$ be a Boolean inverse monoid.
Then $\mathsf{R}(S)_{\nu}$ is a complete right restriction monoid.
\end{lemma}

If we go back to Lemma~\ref{lem:regis},
the sets $[a]$ are closed with respect to the nucleus we have defined above. 

The monoid  $\mathsf{R}(S)_{\nu}$, constructed in Lemma~\ref{lem:rubin}, is too big for our purposes
and so we shall cut it down.
Define 
$$\mathsf{Etale}(S) = \mathsf{fin}(\mathsf{R}(S)_{\nu}),$$
the finite elements of $\mathsf{R}(S)_{\nu}$.
{\em Recall that we are using the word `finite' in the technical sense as defined at the conclusion of Section~4.
The reader might wish to substitute the word `compact' for `finite' in what follows.}
Our first job, therefore, is to describe  the finite elements in $\mathsf{R}(S)_{\nu}$.

\begin{lemma}\label{lem:STJ} Let $S$ be a Boolean inverse monoid.
The finite elements of $\mathsf{R}(S)_{\nu}$  are the subsets of the form 
$(\{a_{1}, \ldots, a_{m}\}^{\downarrow})^{\vee}$,
where $\{a_{1}, \ldots, a_{m}\}$ is a left-compatible subset of $S$.
\end{lemma}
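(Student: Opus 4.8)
The plan is to prove the two inclusions separately, using the facts recorded in Proposition~\ref{prop:as} that the order, left-compatibility and joins in $\mathsf{R}(S)_{\nu}$ are all inherited from $\mathsf{R}(S)$: the order is subset inclusion, two closed sets are left-compatible in $\mathsf{R}(S)_\nu$ iff they are so in $\mathsf{R}(S)$, and the join of a left-compatible family $\{A_i\}$ in $\mathsf{R}(S)_\nu$ is $\bigsqcup_i A_i = \nu(\bigcup_i A_i) = (\bigcup_i A_i)^{\vee}$. With this dictionary the argument reduces to a compactness observation about the operation $(\,\cdot\,)^{\vee}$.

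First I would treat the single-generator sets. For $a \in S$ the set $a^{\downarrow}$ is acceptable by Lemma~\ref{lem:spiked}, and it is already $\nu$-closed, since any compatible join of elements below $a$ again lies below $a$; thus $a^{\downarrow} \in \mathsf{R}(S)_\nu$. I claim $a^{\downarrow}$ is finite in $\mathsf{R}(S)_\nu$. Indeed, if $a^{\downarrow} \leq \bigsqcup_{i \in I} A_i$, then $a \in (\bigcup_{i} A_i)^{\vee}$, so $a = \bigvee_{k=1}^{n} c_k$ with each $c_k$ lying in some $A_{i_k}$; setting $I' = \{i_1, \ldots, i_n\}$ gives $a \in \bigsqcup_{i \in I'} A_i$, and since the latter is an order ideal we get $a^{\downarrow} \leq \bigsqcup_{i \in I'} A_i$. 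The essential point is simply that membership in a set of the form $(\,\cdot\,)^{\vee}$ only ever invokes finitely many summands.

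For sufficiency, given a left-compatible set $\{a_1, \ldots, a_m\}$ of $S$, the set $\{a_1, \ldots, a_m\}^{\downarrow} = \bigcup_{j} a_j^{\downarrow}$ is acceptable by Lemma~\ref{lem:similarity}(2), so $(\{a_1, \ldots, a_m\}^{\downarrow})^{\vee}$ is precisely the join $\bigsqcup_{j=1}^{m} a_j^{\downarrow}$ in $\mathsf{R}(S)_\nu$. The sets $a_j^{\downarrow}$ are pairwise left-compatible in $\mathsf{R}(S)_\nu$, so by iterating part (2) of Lemma~\ref{lem:klavin} this join is finite, being a finite left-compatible join of the finite elements $a_j^{\downarrow}$. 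For necessity, let $A$ be any finite element of $\mathsf{R}(S)_\nu$. As $A$ is an order ideal we have $A = \bigcup_{a \in A} a^{\downarrow}$, and as $A$ is $\nu$-closed this gives $A = \bigsqcup_{a \in A} a^{\downarrow}$ in $\mathsf{R}(S)_\nu$ (the elements of $A$ are pairwise left-compatible, so the join exists). Finiteness of $A$ then yields a finite subset $\{a_1, \ldots, a_m\} \subseteq A$ with $A \leq \bigsqcup_{j} a_j^{\downarrow}$, while the reverse inequality holds because each $a_j^{\downarrow} \subseteq A$; hence $A = \bigsqcup_{j} a_j^{\downarrow} = (\{a_1, \ldots, a_m\}^{\downarrow})^{\vee}$, with $\{a_1, \ldots, a_m\}$ left-compatible as a subset of the acceptable set $A$.

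I expect the only delicate point to be the bookkeeping that transfers order, left-compatibility and joins between $\mathsf{R}(S)$ and $\mathsf{R}(S)_\nu$ through Proposition~\ref{prop:as}; once that is settled, the compactness argument in the single-generator step is the real content, and the remaining manipulations are routine.
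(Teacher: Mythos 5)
Your proposal is correct and follows essentially the same route as the paper: show each principal ideal $a^{\downarrow}$ is finite by a compactness argument on $(\cdot)^{\vee}$, deduce that $(\{a_{1},\ldots,a_{m}\}^{\downarrow})^{\vee}=\bigsqcup_{j}a_{j}^{\downarrow}$ is finite via part (2) of Lemma~\ref{lem:klavin}, and conversely write any finite element as $\bigsqcup_{a\in A}a^{\downarrow}$ and extract a finite subfamily. Your version is if anything slightly more careful than the paper's (e.g.\ in checking that $a^{\downarrow}$ is $\nu$-closed and in spelling out the reverse inclusion in the necessity step), but the ideas are identical.
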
 
\begin{proof} The proof of Proposition~\ref{prop:as} will be used below:
particularly the form taken by the natural partial order and the join.

We work in the complete right restriction monoid $\mathsf{R}(S)_{\nu}$ defined above.
Observe that for each $a \in S$, the set $a^{\downarrow}$ belongs to $\mathsf{R}(S)_{\nu}$.
We show that, in fact, each element of the form $a^{\downarrow}$, where $a \in S$, is finite (in the technical sense).
Suppose that $a^{\downarrow} \leq \bigsqcup_{i \in I} B_{i}$  
where $B_{i} \in  \mathsf{R}(S)_{\nu}$.
Then $a^{\downarrow} \subseteq \nu ( \bigvee_{i \in I} B_{i})$  
where $B_{i}$ are $\nu$-closed elements of $\mathsf{R}(S)$.
Thus $a = \bigvee_{j=1}^{n} b_{j}$ where $b_{j} \in B_{j}$,
relabelling if necessary.
This implies that $a^{\downarrow} \leq \bigsqcup_{j = 1}^{n} B_{i}$.
This proves that $a^{\downarrow}$ is finite.

Put $A = \{a_{1}, \ldots, a_{m}\}$. 
Then $A^{\downarrow}$ is an acceptable set, using Lemma~\ref{lem:similarity}, and so a well-defined element of  $\mathsf{R}(S)$.
Observe that if $X = (\{a_{1}, \ldots, a_{m}\}^{\downarrow})^{\vee}$,
then $X = a_{1}^{\downarrow} \sqcup \ldots \sqcup a_{1}^{\downarrow}$ 
and so is finite, since any finite join of such elements is again finite
by part (2) of Lemma~\ref{lem:klavin}. 

We now show that all finite elements have this form.
Suppose that $X$ is a finite element of  $\mathsf{R}(S)_{\nu}$.
Thus $X$ is a closed acceptable subset.
We may write $X = \bigsqcup_{a \in A} a^{\downarrow}$.
But we have assumed that $X$ is finite.
Thus $X = \bigsqcup_{i=1}^{m} a_{i}^{\downarrow}$ for some $m$.
\end{proof}

If we go back to Lemma~\ref{lem:regis}, then we see that the elements $[a]$
are indeed finite in our technical sense.

The proof of the following is now easy by Lemma~\ref{lem:STJ}
since we can describe the finite elements of $\mathsf{R}(S)_{\nu}$
and apply Lemma~\ref{lem:similarity}. 

\begin{lemma}\label{lem:tucker} 
In  $\mathsf{R}(S)_{\nu}$, the product of finite elements is a finite element.
\end{lemma}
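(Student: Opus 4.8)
The plan is to use the explicit description of the finite elements supplied by Lemma~\ref{lem:STJ} and simply compute the product of two of them, showing that the result again has the required form. By Lemma~\ref{lem:STJ}, a pair of finite elements of $\mathsf{R}(S)_{\nu}$ can be written as $X = (\{a_{1}, \ldots, a_{m}\}^{\downarrow})^{\vee}$ and $Y = (\{b_{1}, \ldots, b_{n}\}^{\downarrow})^{\vee}$, where $\{a_{1},\ldots,a_{m}\}$ and $\{b_{1},\ldots,b_{n}\}$ are left-compatible subsets of $S$. Using the description of the join in $\mathsf{R}(S)_{\nu}$ recorded in the proof of Proposition~\ref{prop:as}, these are precisely the finite joins $X = a_{1}^{\downarrow} \sqcup \cdots \sqcup a_{m}^{\downarrow}$ and $Y = b_{1}^{\downarrow} \sqcup \cdots \sqcup b_{n}^{\downarrow}$ taken in $\mathsf{R}(S)_{\nu}$.

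First I would distribute the product over these joins. Since $\mathsf{R}(S)_{\nu}$ is a complete right restriction monoid by Lemma~\ref{lem:rubin}, multiplication distributes over left-compatible joins from the right by definition and from the left by part (3) of Lemma~\ref{lem:posy} together with the remark following it; the relevant joins all exist because right (resp.\ left) multiplication preserves left-compatibility by part (1) of Lemma~\ref{lem:similarity}. Hence
$$X \cdot Y = \bigsqcup_{i,j} a_{i}^{\downarrow} \cdot b_{j}^{\downarrow}.$$
Next I would evaluate each factor. In $\mathsf{R}(S)$ we have $a_{i}^{\downarrow} b_{j}^{\downarrow} = (a_{i}b_{j})^{\downarrow}$, as noted in the proof of Proposition~\ref{prop:completion}, and since $(a_{i}b_{j})^{\downarrow}$ is already $\nu$-closed (it lies in $\mathsf{R}(S)_{\nu}$, as observed in the proof of Lemma~\ref{lem:STJ}), the product in $\mathsf{R}(S)_{\nu}$ is $a_{i}^{\downarrow} \cdot b_{j}^{\downarrow} = \nu((a_{i}b_{j})^{\downarrow}) = (a_{i}b_{j})^{\downarrow}$. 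Therefore $X \cdot Y = \bigsqcup_{i,j} (a_{i}b_{j})^{\downarrow} = (\{a_{i}b_{j} \colon i,j\}^{\downarrow})^{\vee}$.

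To conclude, I would verify that the set $\{a_{i}b_{j} \colon i,j\}$ is left-compatible in $S$, so that this really is a finite element in the sense of Lemma~\ref{lem:STJ}. This is exactly where Lemma~\ref{lem:similarity} enters: since $a_{i} \sim_{l} a_{k}$ and $b_{j} \sim_{l} b_{l}$ for all indices, part (1) of that lemma gives $a_{i}b_{j} \sim_{l} a_{k}b_{l}$, so the products form a left-compatible set. Lemma~\ref{lem:STJ} then identifies $X \cdot Y$ as a finite element, completing the proof. The only real care needed is to keep straight in which monoid ($S$, $\mathsf{R}(S)$, or $\mathsf{R}(S)_{\nu}$) each join, product, and compatibility statement is being made; the left-compatibility check via Lemma~\ref{lem:similarity} is the one genuinely non-formal step, and it is routine.
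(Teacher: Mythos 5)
Your proof is correct and follows essentially the same route as the paper: both start from the description of finite elements in Lemma~\ref{lem:STJ}, compute the product explicitly as $(\{a_{i}b_{j}\}^{\downarrow})^{\vee}$, and invoke part (1) of Lemma~\ref{lem:similarity} to see that the set of products is left-compatible. The only cosmetic difference is that you distribute the product over the joins term by term, whereas the paper computes $A \cdot B = (\{a_{1},\ldots,a_{m}\}^{\downarrow}\{b_{1},\ldots,b_{n}\}^{\downarrow})^{\vee}$ in a single step via Lemma~\ref{lem:nucleus-properties}.
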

\begin{proof} According to Lemma~\ref{lem:STJ},
we have a description of the finite elements.
Let $A = (\{a_{1}, \ldots, a_{m}\}^{\downarrow})^{\vee}$,
where $\{a_{1}, \ldots, a_{m}\}$ is a left-compatible subset,
and let 
$B = (\{b_{1}, \ldots, b_{m}\}^{\downarrow})^{\vee}$,
where $\{b_{1}, \ldots, b_{m}\}$ is a left-compatible subset,
be finite elements of $\mathsf{R}(S)_{\nu}$.
Their product, $A \cdot B$ is just
$(\{a_{1}, \ldots, a_{m}\}^{\downarrow}\{b_{1}, \ldots, b_{m}\}^{\downarrow})^{\vee}$,
where we have used the definition of the product in $\mathsf{R}(S)_{\nu}$
and Lemma~\ref{lem:nucleus-properties}.
Now we apply Lemma~\ref{lem:similarity}
to deduce that $a_{i}b_{j} \sim_{l} a_{s}b_{t}$.
It follows that $A \cdot B$ is also finite (in our technical sense). 
\end{proof}

We can now state and prove the main theorem of this paper.
This shows us how to construct all \'etale Boolean right restriction monoids from Boolean inverse monoids.

\begin{theorem}\label{them:etale-main}  Let $S$ be a Boolean inverse monoid.
Then $\mathsf{Etale}(S)$ is an \'etale Boolean right restriction monoid
whose semigroup of partial units is isomorphic to $S$.
\end{theorem}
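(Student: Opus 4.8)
The plan is to check three things in turn: that $\mathsf{Etale}(S)$ is a Boolean right restriction monoid, that it is \'etale, and that $\mathsf{Inv}(\mathsf{Etale}(S)) \cong S$, with essentially all of the content concentrated in the last claim.

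First I would confirm the algebraic structure. By Lemma~\ref{lem:tucker} the finite elements are closed under the product of $\mathsf{R}(S)_{\nu}$, so by part~(3) of Lemma~\ref{lem:klavin} they form a distributive right restriction semigroup; it is a monoid because the identity $1^{\downarrow}$ of $\mathsf{R}(S)_{\nu}$ is a principal ideal, hence finite by the argument in the proof of Lemma~\ref{lem:STJ}. To see that it is Boolean I would identify its projections. By Lemma~\ref{lem:STJ} and the description of the projections of $\mathsf{R}(S)$ as order ideals of $\mathsf{Proj}(S)$, a finite projection is $\bigsqcup_{i} e_{i}^{\downarrow}$ with each $e_{i} \in \mathsf{Proj}(S)$; since in a Boolean inverse monoid idempotents are pairwise compatible with idempotent join, this collapses to a single $e^{\downarrow}$ with $e \in \mathsf{Proj}(S)$. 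Thus $e \mapsto e^{\downarrow}$ is an isomorphism from the Boolean algebra $\mathsf{Proj}(S)$ onto $\mathsf{Proj}(\mathsf{Etale}(S))$, and so the latter is a Boolean algebra.

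Next I would set up the map $\iota \colon S \rightarrow \mathsf{Etale}(S)$, $a \mapsto a^{\downarrow}$. Each $a^{\downarrow}$ is $\nu$-closed and finite, so lies in $\mathsf{Etale}(S)$, and using $a^{\downarrow}b^{\downarrow} = (ab)^{\downarrow}$ from Proposition~\ref{prop:completion} together with closedness, $\iota$ is an injective homomorphism of right restriction monoids. Moreover $a^{\downarrow}$ is a partial unit with inverse $(a^{-1})^{\downarrow}$, since $(a^{-1})^{\downarrow} \cdot a^{\downarrow} = (a^{-1}a)^{\downarrow} = (a^{\downarrow})^{\circ}$ and dually. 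Because $\iota$ also preserves compatible joins (by the meet-distributivity available in a Boolean inverse monoid), it is an injective morphism of Boolean inverse monoids onto a subset of $\mathsf{Inv}(\mathsf{Etale}(S))$; since $\iota$ restricts to a bijection $\mathsf{Proj}(S) \rightarrow \mathsf{Proj}(\mathsf{Etale}(S))$ and is injective, it reflects idempotency, and hence also compatibility. \'Etale-ness is then immediate: by Lemma~\ref{lem:STJ} every element of $\mathsf{Etale}(S)$ is $\bigsqcup_{i=1}^{m} a_{i}^{\downarrow}$, a finite left-compatible join of the partial units $\iota(a_{i})$.

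The crux is the surjectivity of $\iota$ onto $\mathsf{Inv}(\mathsf{Etale}(S))$, and I expect this to be the main obstacle: \'etale-ness supplies only a \emph{left}-compatible decomposition, whereas to reassemble an element inside the inverse monoid $S$ I need a genuinely compatible one. Given a partial unit $W$, I would write $W = \bigsqcup_{i=1}^{m} b_{i}^{\downarrow}$ by Lemma~\ref{lem:STJ}; each $b_{i}^{\downarrow}$ is then a partial unit lying below $W$ in the inverse monoid $\mathsf{Inv}(\mathsf{Etale}(S))$ of Lemma~\ref{lem:sleet}. The decisive observation is that partial units beneath a common partial unit are compatible, because in any inverse semigroup $u,v \leq w$ forces $uv^{-1} \leq ww^{-1}$ and $u^{-1}v \leq w^{-1}w$, both idempotent. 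Hence the $b_{i}^{\downarrow}$ are pairwise compatible in $\mathsf{Inv}(\mathsf{Etale}(S))$, so by the reflection property the $b_{i}$ are pairwise compatible in $S$, and $b := \bigvee_{i} b_{i}$ exists since $S$ is Boolean. Finally $\iota(b) = \bigvee_{i} b_{i}^{\downarrow} = W$, since $\iota$ preserves compatible joins and the compatible join of the $b_{i}^{\downarrow}$ in $\mathsf{Inv}(\mathsf{Etale}(S))$ is computed as their join $W$ in $\mathsf{Etale}(S)$ by the mechanism of Lemma~\ref{lem:sleet}. Thus $W = \iota(b)$, giving the desired isomorphism $S \cong \mathsf{Inv}(\mathsf{Etale}(S))$.
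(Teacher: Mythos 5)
Your proof is correct, and its first two stages --- that $\mathsf{Etale}(S)$ is a Boolean right restriction monoid whose projections are the principal order ideals $e^{\downarrow}$ with $e$ idempotent, and that it is \'etale because every finite element is $\bigsqcup_{i} a_{i}^{\downarrow}$ with each $a_{i}^{\downarrow}$ a partial unit --- follow the paper essentially verbatim. Where you genuinely diverge is at the crux, the identification of $\mathsf{Inv}(\mathsf{Etale}(S))$ with the image of $\iota$. The paper argues concretely: given a partial unit $A = (\{a_{1},\ldots,a_{m}\}^{\downarrow})^{\vee}$ with inverse $X$, it expands the membership $a_{i}^{\ast} \in X \cdot A$ into a join $\bigvee_{j} y_{j}b_{j}$, massages this to show $a_{i}^{-1} \in X$, and then uses the left-compatibility of the elements of $X$ together with Lemma~\ref{lem:yoghurt} to conclude that the $a_{i}$ are right-compatible, hence compatible, so that $A = (a_{1} \vee \ldots \vee a_{m})^{\downarrow}$. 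You instead work abstractly inside the inverse monoid $\mathsf{Inv}(\mathsf{Etale}(S))$ supplied by Lemma~\ref{lem:sleet}: the summands $b_{i}^{\downarrow}$ of a partial unit $W$ lie below $W$ in the natural partial order, which by part (3) of Lemma~\ref{lem:holdo} agrees with the inverse-semigroup order on partial units, and elements of an inverse semigroup below a common element are automatically compatible (the order-ideal property of the idempotents applied to $uv^{-1} \leq ww^{-1}$ and $u^{-1}v \leq w^{-1}w$); compatibility is then pulled back along $\iota$ because $\iota$ is injective, multiplicative, inverse-preserving and restricts to a bijection on projections, so it reflects idempotency. Both routes are sound. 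Yours buys a shorter and more conceptual argument that isolates exactly where the left/right asymmetry is resolved, at the cost of leaning on the ambient inverse-monoid structure of the partial units; the paper's computation never leaves the concrete description of elements of $\mathsf{Etale}(S)$ and exhibits the inverse $X$ of a partial unit explicitly. If you write your version out in full, do record the identity $(b^{-1})^{\downarrow} \cdot b^{\downarrow} = (b^{-1}b)^{\downarrow}$ explicitly, since the step ``$\iota$ reflects compatibility'' silently uses $\iota(b_{j})^{-1} = \iota(b_{j}^{-1})$.
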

\begin{proof} The product of two finite elements is finite by Lemma~\ref{lem:tucker}.
It follows by part (3) of Lemma~\ref{lem:klavin}, that $\mathsf{Etale}(S)$
is a distributive right restriction monoid. 
If $A$ is finite (in our technical sense) then $A^{\circ}$ is finite by Lemma~\ref{lem:klavin}.
However, $A^{\circ}$ is of the form $(\{a_{1}^{\ast},\ldots,  a_{m}^{\ast}\}^{\downarrow})^{\vee}$
which is equal to $(a_{1}^{\ast} \vee \ldots  \vee a_{m}^{\ast})^{\downarrow}$.
We deduce that the projections of $\mathsf{Etale}(S)$ are the principal order ideals of the idempotents of $S$
and so form a meet-semilattice isomorphic with the set of idempotents of $S$, which is a Boolean algebra.
It follows that $\mathsf{Etale}(S)$ is, in fact, a Boolean right restriction monoid.

Elements of the form $a^{\downarrow}$, for some $a \in S$, are partial units
because 
$a^{\downarrow}(a^{-1})^{\downarrow} = (aa^{-1})^{\downarrow}$ 
and
$(a^{-1})^{\downarrow}a^{\downarrow} = (a^{-1}a)^{\downarrow}$.
However,  every element of $\mathsf{Etale}(S)$ can be written $\bigsqcup_{i=1}^{m} a_{i}^{\downarrow}$,
for some left-compatible subset $\{a_{1},\ldots, a_{m}\}$.
This immediately implies that $\mathsf{Etale}(S)$ is \'etale.
We shall now prove that every partial unit  $\mathsf{Etale}(S)$
is of the form  $a^{\downarrow}$ from which it follows that the partial units of 
$\mathsf{Etale}(S)$ form a semigroup isomorphic to $S$.
Let $A = (\{a_{1},\ldots, a_{m}\}^{\downarrow})^{\vee}$ be a partial unit of $\mathsf{Etale}(S)$.
Then there is an element  
$X = (\{x_{1},\ldots, x_{n}\}^{\downarrow})^{\vee}$ of $\mathsf{Etale}(S)$
such that $A \cdot X = (x_{1}^{\ast} \vee \ldots \vee x_{n}^{\ast})^{\downarrow}$ 
and
$X \cdot A = (a_{1}^{\ast} \vee \ldots \vee a_{m}^{\ast})^{\downarrow}$.
Choose any $a_{i}$.
Then $a_{i}^{\ast} \in X \cdot A$.
It follows that $a_{i}^{\ast} = \bigvee_{j=1}^{s}y_{j}b_{j}$
where $y_{j} \in \{x_{1},\ldots, x_{n}\}^{\downarrow}$
and $b_{j} \in \{a_{1},\ldots, a_{m}\}^{\downarrow}$.
Since each  $b_{j} \in \{a_{1},\ldots, a_{m}\}^{\downarrow}$, we may write $b_{j} = b_{j}b_{j}^{-1}a_{j}'$
where $a_{j}'$ is one of the elements $a_{1},\ldots, a_{m}$.
It is therefore immediate that $y_{j}b_{j} = (y_{j}b_{j}b_{j}^{-1})a_{j}'$.
But $y_{j}b_{j}b_{j}^{-1} \in \{x_{1},\ldots, x_{n}\}^{\downarrow}$ since this is an order ideal.
Thus we can write (relabelling if necessary)
$a_{i}^{-1}a_{i} = \bigvee_{j=1}^{s} y_{j}a_{j}'$
where $y_{j} \in  \{x_{1},\ldots, x_{n}\}^{\downarrow}$ 
and the $a_{j}'$ is one of the elements $a_{1},\ldots, a_{m}$.
It follows that
$a_{i}^{-1} =  \bigvee_{j=1}^{s} y_{j}(a_{j}'a_{i}^{-1})$
since $a_{i}^{-1} = a_{i}^{-1}a_{i}a_{i}^{-1}$.
Because $\{a_{1},\ldots, a_{m}\}$ is a left-compatible set
the element
$a_{j}'a_{i}^{-1}$ is always a projection/idempotent by Lemma~\ref{lem:yakult}.
Thus  $y_{j}(a_{j}'a_{i}^{-1}) \in \{x_{1},\ldots, x_{n}\}^{\downarrow}$.
We have therefore shown that $a_{i}^{-1} \in X$.
The elements of $X$ are left-compatible.
It follows that $a_{i}^{-1}$ and $a_{j}^{-1}$ are left-compatible,
and so, $a_{i}$ and $a_{j}$ are right-compatible by Lemma~\ref{lem:yoghurt}.
Thus $a_{i}$ and $a_{j}$ are compatible.
It follows that $A = (\{a_{1},\ldots, a_{m}\}^{\downarrow})^{\vee} = (a_{1} \vee \ldots \vee a_{m})^{\downarrow}$.
\end{proof}

The following is included at the suggestion of the referee and combines
Theorem~\ref{them:one} and Theorem~\ref{them:etale-main}.

\begin{theorem}\label{thm:referee} Every \'etale Boolean right restriction monoid $S$
is the companion of a unique (upto isomorphism) Boolean inverse monoid:
namely, $\mathsf{Inv}(S)$.
\end{theorem}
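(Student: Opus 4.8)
The plan is to deduce this statement as a direct corollary of the two main theorems, Theorem~\ref{them:one} and Theorem~\ref{them:etale-main}, by combining their existence and uniqueness content. The assertion has two parts: that $S$ is the companion of $\mathsf{Inv}(S)$, meaning $S \cong \mathsf{Etale}(\mathsf{Inv}(S))$, and that no other Boolean inverse monoid, up to isomorphism, has $S$ as its companion.

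For the existence part, I would set $B = \mathsf{Inv}(S)$, which is a Boolean inverse monoid by Lemma~\ref{lem:sleet}. Theorem~\ref{them:etale-main} then produces the \'etale Boolean right restriction monoid $\mathsf{Etale}(B)$ together with an isomorphism $\mathsf{Inv}(\mathsf{Etale}(B)) \cong B = \mathsf{Inv}(S)$ of Boolean inverse monoids. Thus $S$ and $\mathsf{Etale}(B)$ are both \'etale Boolean right restriction monoids whose Boolean inverse monoids of partial units are isomorphic. Applying Theorem~\ref{them:one}(2) to this isomorphism yields $S \cong \mathsf{Etale}(B) = \mathsf{Etale}(\mathsf{Inv}(S))$, which is exactly the statement that $S$ is the companion of $\mathsf{Inv}(S)$.

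For the uniqueness part, I would suppose that $S$ is also the companion of some Boolean inverse monoid $B'$, that is, $S \cong \mathsf{Etale}(B')$. Applying $\mathsf{Inv}$ to both sides and invoking Theorem~\ref{them:etale-main} once more gives $\mathsf{Inv}(S) \cong \mathsf{Inv}(\mathsf{Etale}(B')) \cong B'$. Hence any Boolean inverse monoid whose companion is $S$ must be isomorphic to $\mathsf{Inv}(S)$, which establishes uniqueness up to isomorphism.

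The only point requiring care, and the nearest thing to an obstacle, is verifying that the isomorphisms involved live in the correct category. Theorem~\ref{them:one}(2) takes as input an isomorphism $\mathsf{Inv}(S) \cong \mathsf{Inv}(T)$ of Boolean inverse monoids, so I must confirm that the isomorphism $\mathsf{Inv}(\mathsf{Etale}(B)) \cong B$ supplied by Theorem~\ref{them:etale-main} is one of Boolean inverse monoids and not merely of semigroups. This is already visible in the proof of Theorem~\ref{them:etale-main}, where the partial units are identified as precisely the principal order ideals $a^{\downarrow}$ and the correspondence $a \mapsto a^{\downarrow}$ is shown to respect products, the inverse operation, and compatible joins; hence it is an isomorphism of Boolean inverse monoids and Theorem~\ref{them:one}(2) applies as required.
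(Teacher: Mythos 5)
Your proposal is correct and matches the paper's intent exactly: the paper offers no separate proof of this theorem, stating only that it ``combines Theorem~\ref{them:one} and Theorem~\ref{them:etale-main}'', which is precisely the existence-plus-uniqueness argument you give. Your added check that the isomorphism $\mathsf{Inv}(\mathsf{Etale}(B)) \cong B$ is one of Boolean inverse monoids is a sensible precaution and is indeed supplied by the proof of Theorem~\ref{them:etale-main}.
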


\begin{remark}\label{rem:functor}
{\em This remark is included at the suggestion of the referee.
Denote by $\mathscr{B}$ the category of Boolean inverse monoids and their homomorphisms.
Denote by $\mathscr{E}$ the category of \'etale Boolean right restriction monoids and their homomorphisms.
By Lemma~\ref{lem:functor}, define a functor $F \colon \mathscr{E} \rightarrow \mathscr{B}$,
which associates with every \'etale Boolean right restriction monoid $S$
its Boolean inverse monoid of partial units $\mathsf{Inv}(S)$,
and with every $\theta \colon S \rightarrow T$, a homomorphism of Boolean 
right restriction monoids, its restriction to the partial units of $S$.
Define a functor $G \colon \mathscr{B} \rightarrow \mathscr{E}$, 
which associates with each Boolean inverse monoid $S$ the
\'etale Boolean right restriction monoid $\mathsf{Etale}(S)$,
and with every homomorphism from $S$ to $T$
the homomorphism from $\mathsf{Etale}(S)$ to $\mathsf{Etale}(T)$
guaranteed by part (1) of Theorem~\ref{them:one}.
In fact, these functors induce an equivalence of categories.
}
\end{remark}

\begin{remark}{\em There is a simpler way of constructing $\mathsf{Etale}(S)$ but at the expense
of using non-commutative Stone duality \cite{Lawson2024}.
This construction does not use anything we have hitherto introduced in this paper,
and will not be used below.
It is included only for the sake of completeness.
If $G$ is a groupoid then $G_{o}$ denotes its space of identities.
If $g \in G$ then we write $\mathbf{d}(g) = g^{-1}g$ and $\mathbf{r}(g) = gg^{-1}$.
Our convention is that $gh$ is defined in a groupoid precisely when $\mathbf{d}(g) = \mathbf{r}(h)$.
Classical Stone duality tells us that every Boolean algebra gives rise to a Boolean space
and the clopen subsets of a Boolean space form a Boolean algebra.
A {\em Boolean groupoid} is an \'etale topological groupoid whose space of identities is a Boolean space.
A subset $A \subseteq G$ of a groupoid is said to be a {\em local bisection} if $A^{-1}A,AA^{-1} \subseteq G_{o}$.
The set of compact-open local bisections, $\mathsf{KB}(G)$, of a Boolean groupoid  $G$ forms a Boolean inverse monoid
under subset multiplication,
and it is the substance of non-commutative Stone duality that every Boolean inverse monoid arises in this way.

We shall now generalize this idea.
Let $G$ be a groupoid.
We say that $A \subseteq G$ is a {\em local right bisection}
if $a,b \in A$ and $\mathbf{d}(a)  = \mathbf{d}(b)$ implies that $a = b$.
It is easy to check that the product of local right bisections is again a local right bisection.
Denote by $\mathsf{R}(G)$ the set of all compact-open local right bisections of the Boolean groupoid $G$.
Let $A$ and $B$ be compact-open local right bisections.
Then $AB$ is open, because $G$ is an \'etale groupoid, and we have already noted that it is a local right bisection.
It is compact because the multiplication map in a topological groupoid is continuous.
It is thus easy to see that $\mathsf{R}(G)$ is a semigroup; in fact, it is a monoid.
If $A \in \mathsf{R}(G)$, define $A^{\ast} = \{\mathbf{d}(a) \colon a \in A \}$.
This is a clopen subset of the space $G_{o}$ because $G$ is \'etale. 
Put $\mathsf{Proj} (\mathsf{R}(G))$ equal to the set of all clopen subsets of the space $G_{o}$.
This is a Boolean algebra.
Using, in particular,  the properties of $\mathbf{d}$, it is now routine to check that $\mathsf{R}(G)$ is a Boolean right restriction monoid.
The partial units of this monoid are precisely the compact-open local bisections.
The monoid $\mathsf{R}(G)$ is \'etale because $G$ has a basis consisting of compact-open local bisections.
This proves that $\mathsf{R}(G)$ is the companion of $\mathsf{KB}(G)$.

We now describe an example of the above procedure.
Let $X$ be a finite non-empty set equipped with the discrete topology.
We return to Example~\ref{ex:first}.
The monoid
$\mathsf{PT}(X)$ is a Boolean right restriction monoid.
We locate the partial units.
We claim that these are precisely the elements $\mathsf{I}(X)$,
the set of partial bijections on $X$.
It is clear that $\mathsf{I}(X) \subseteq \mathsf{Inv}(X)$.
Let $f \in \mathsf{PT}(X)$ be a partial unit.
Then there is an element $g \in \mathsf{PT}(X)$
such that $fg = g^{\ast}$ and $gf = f^{\ast}$.
Suppose that $f(x) = f(y)$ where $x,y \in \mbox{dom}(f)$.
Then $g(f(x)) = g(f(y))$.
But $gf$ is the identity function on $\mbox{dom}(f)$.
Thus $x = y$.
We have proved that $f$ is injective.
Put $\mbox{im}(f) = \mbox{dom}(g)$.
Let $y \in \mbox{dom}(g)$.
Then $(fg)(y) = y$.
Put $x = g(y)$.
Then $f(x) = (fg)(y) = y$.
We therefore have a bijection from $\mbox{dom}(f)$ onto $\mbox{dom}(g)$.
It follows that $f \in \mathsf{I}(X)$.
Define the partial function $g^{x}_{y}$ which has domain of definition $\{x\}$ and maps $x$ to $y$.
Clearly,  $g^{x}_{y}$ is a partial unit
and each element of $\mathsf{PT}(X)$ is a finite join of left-compatible elements
of the form $g^{x}_{y}$.
Thus $\mathsf{PT}(X)$ is \'etale.
It follows that in the case where $X$ is finite,
$\mathsf{PT}(X)$ arises from $\mathsf{I}(X)$ via the construction of this paper.
This fits into the procedure described above.
The Boolean groupoid associated with $\mathsf{I}(X)$ is $X \times X$,
where $(x,y)^{-1} = (y,x)$ and $(x,y)(y,z) = (x,z)$.
The compact-open local right bisections of $X \times X$ correspond exactly  to the partial functions of $X$.}
\end{remark}

We shall now describe a special case of our categorical equivalence presented in Remark~\ref{rem:functor}.
An {\em additive ideal} in a Boolean inverse semigroup is an ideal which is closed under finite compatible joins.
We say that a Boolean inverse monoid $S$ is {\em $0$-simplifying}
if the only additive ideals are $\{0\}$ and $S$.
The following is a version of \cite[Theorem 4.7]{G2}.

\begin{lemma}\label{lem:ideals} Let $S$ be an \'etale Boolean right restriction monoid.
Then $\mathsf{Inv}(S)$ is $0$-simplifying if and only if
for each non-zero projection $e$ there exists a total  element $a \in S$ such that $(ea)^{\ast} = 1$.
\end{lemma}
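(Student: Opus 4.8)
The plan is to route the equivalence through an intermediate, purely inverse-monoid-theoretic condition on $\mathsf{Inv}(S)$:
$$(\dagger)\quad \text{for every nonzero } e \in \mathsf{Proj}(S) \text{ there are partial units } s_{1}, \ldots, s_{n} \text{ with } \mathbf{r}(s_{i}) \leq e \text{ and } \textstyle\bigvee_{i} \mathbf{d}(s_{i}) = 1.$$
First I would prove $0$-simplifying $\Leftrightarrow (\dagger)$, working entirely inside $\mathsf{Inv}(S)$, and then prove that $(\dagger)$ is equivalent to the stated condition on total elements of $S$; it is in this second equivalence that the étale hypothesis is used.

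For $0$-simplifying $\Rightarrow (\dagger)$ I would make the additive ideal generated by a nonzero $e$ explicit. Define $J = \{x \in \mathsf{Inv}(S) : \mathbf{d}(x) = \bigvee_{i=1}^{n} \mathbf{d}(s_{i}) \text{ for some partial units } s_{i} \text{ with } \mathbf{r}(s_{i}) \leq e\}$, and check that $J$ is an order ideal (for $y \leq x$ meet the $\mathbf{d}(s_{i})$ with $\mathbf{d}(y)$, using Lemma~\ref{lem:posy}), a semigroup ideal (for $xt$ use $\mathbf{d}(xt) = t^{-1}\mathbf{d}(x)t = \bigvee \mathbf{d}(s_{i}t)$ with $\mathbf{r}(s_{i}t) \leq \mathbf{r}(s_{i})$; for $tx$ use $\mathbf{d}(tx) \leq \mathbf{d}(x)$), and closed under finite compatible joins (concatenate the two finite families). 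Since $e \in J$ and $e \neq 0$, $0$-simplicity forces $J = \mathsf{Inv}(S)$, so $1 \in J$, which is precisely $(\dagger)$. For the converse $(\dagger) \Rightarrow 0$-simplifying, given a nonzero additive ideal $L$ I would pick $0 \neq x \in L$, note $e := \mathbf{d}(x) \in L$ is a nonzero projection, apply $(\dagger)$, observe $\mathbf{d}(s_{i}) = s_{i}^{-1}es_{i} \in L$ (since $es_{i} = s_{i}$ as $\mathbf{r}(s_{i}) \leq e$), and join these pairwise compatible idempotents to get $1 = \bigvee_{i} \mathbf{d}(s_{i}) \in L$, whence $L = \mathsf{Inv}(S)$.

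For $(\dagger) \Leftrightarrow$ (a total $a$ with $(ea)^{\ast} = 1$): given a total $a$, étaleness lets me write $a = \bigvee_{i=1}^{m} a_{i}$ with the $a_{i}$ left-compatible partial units; putting $s_{i} = ea_{i}$ (partial units by Lemma~\ref{lem:sleet}) gives $\mathbf{r}(s_{i}) \leq e$ and, via left distributivity and Lemma~\ref{lem:posy}, $\bigvee_{i} \mathbf{d}(s_{i}) = (ea)^{\ast} = 1$, yielding $(\dagger)$. Conversely, from $(\dagger)$ I would orthogonalise the domains: since $\mathsf{Proj}(S)$ is a Boolean algebra, replace $d_{i} = \mathbf{d}(s_{i})$ by pairwise orthogonal $f_{i} \leq d_{i}$ with $\bigvee_{i} f_{i} = \bigvee_{i} d_{i} = 1$, and set $s_{i}' = s_{i}f_{i}$, so $\mathbf{d}(s_{i}') = f_{i}$ are orthogonal and $\mathbf{r}(s_{i}') \leq e$. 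Taking $a := \bigvee_{i} s_{i}'$ in $S$ then produces a total element with $ea = \bigvee_{i} es_{i}' = \bigvee_{i} s_{i}' = a$, so $(ea)^{\ast} = a^{\ast} = 1$.

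The conceptual crux, and the step I expect to require the most care, is the compatibility bookkeeping in this last construction. A short computation shows $s_{i}'(s_{j}')^{-1} = 0$ for $i \neq j$, so by Lemma~\ref{lem:yakult} the $s_{i}'$ are pairwise \emph{left}-compatible; but they are in general \emph{not} right-compatible (their ranges may overlap), so they cannot be joined inside $\mathsf{Inv}(S)$. This is exactly why the join $\bigvee_{i} s_{i}'$ must be formed in the ambient right restriction monoid $S$, where only left-compatibility is demanded. The remaining verifications — that $J$ is genuinely an additive ideal, and the distributivity manipulations — I expect to be routine given Lemma~\ref{lem:posy} and Lemma~\ref{lem:similarity}.
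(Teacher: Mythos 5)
Your proposal is correct and follows essentially the same route as the paper: both directions hinge on identifying the additive ideal generated by a non-zero projection $e$, extracting from $1$ lying in it a finite family of partial units with ranges below $e$ and domains joining to $1$, orthogonalising the domains, and then forming the join in $S$ rather than in $\mathsf{Inv}(S)$ (your observation that the pieces are left-orthogonal but generally not right-compatible is exactly the point). Your factorisation through the intermediate condition $(\dagger)$ and your direct description of the ideal $J$ via domains are a mild reorganisation of the paper's use of $(IeI)^{\vee}$, not a different argument; the only nitpick is that closure of partial units under products is Lemma~\ref{lem:holdo}(1) rather than Lemma~\ref{lem:sleet}.
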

\begin{proof} Suppose first that $I = \mathsf{Inv}(S)$ is $0$-simplifying.
Let $e$ be any non-zero projection.
Then $IeI$ is an ideal of $I$ and $(IeI)^{\vee}$ is an additive ideal of $I$.
By assumption, $(IeI)^{\vee} = I$.
Since $1 \in I$, it follows that $1 = \bigvee_{i=1}^{n} e_{i}$
for some idempotents/projections $e_{i} \in IeI$. 
Observe that the join $e_{1} \vee e_{2} \overline{e_{1}} \vee \ldots \vee e_{n} \overline{e_{1}} \, \overline{e_{2}} \ldots \, \overline{e_{n-1}}$
is also $1$ and that each term belongs to $IeI$ since this is an order ideal.
It follows that we lose no generality in assuming that $1 = \bigvee_{i=1}^{n} e_{i}$ is an orthogonal join.
By assumption, $e_{i} = a_{i}eb_{i}$ where $a_{i}, b_{i} \in \mathsf{Inv}(S)$.
Define $x_{i} = e\mathbf{d}(a_{i})b_{i}$.
Observe that $x_{i} = ex_{i}$.
A simple calculation show that $x_{i}^{-1}x_{i} = e_{i}$.
Thus $1 = \bigvee_{i=1}^{n} x_{i}^{-1}ex_{i}$
Observe that the elements $x_{i}$ are pairwise left-orthogonal.
Put $a = \bigvee_{i=1}^{n} x_{i} \in S$.
But $a^{\ast} = 1$.
We now calculate $(ea)^{\ast}$.
But this is just $1$.
We now prove the converse.
Let $J$ be any non-trivial additive ideal of $\mathsf{Inv}(S)$
and let $e$ be a non-zero projection in $J$.
Them, by assumption, there is an element $a \in S$ such that $(ea)^{\ast} = 1$.
By assumption, $S$ is \'etale.
Thus there are partial units $a_{i}$ such that $a = \bigvee_{i=1}^{n} a_{i}$.
Thus $1 = (ea)^{\ast} = \bigvee_{i=1}^{n} (ea_{i})^{\ast}$.
It follows that $1 \in J$ and so $J = \mathsf{Inv}(S)$.
We have proved that $\mathsf{Inv}(S)$ is $0$-simplifying.
\end{proof}

\begin{remark}
{\em Let $S$ be a Boolean inverse monoid.
There is a {\em natural (right) action} of $\mathsf{U}(S)$, the group of units of $S$, on $\mathsf{E}(S)$, the Boolean algebra of idempotents of $S$, given by
$e \mapsto g^{-1}eg = (eg)^{\ast}$.
An inverse semigroup is said to be {\em fundamental}  
if the only element that commutes with all idempotents is itself an idempotent.
By \cite[Lemma 5.5]{Lawson2016},  a Boolean inverse monoid is fundamental iff the natural action is faithful.
A Boolean inverse monoid is said to be {\em additively simple}
if it is both $0$-simplifying and fundamental.
Those \'etale Boolean right restriction monoids $S$ are most interesting
where $\mathsf{Inv}(S)$ is additively simple.
}
\end{remark}

We can now determine for which class of Boolean inverse monoids $S$ the companion is actually isomorphic to $S$.

\begin{proposition} Let $S$ be a Boolean inverse monoid.
Then $\mathsf{Etale}(S) \cong S$ if and only if in $S$ left-compatible elements are compatible.
\end{proposition}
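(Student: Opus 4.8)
The plan is to reduce the biconditional to a single structural question: \emph{is every element of $\mathsf{Etale}(S)$ a partial unit?} By Theorem~\ref{them:etale-main} we already know $\mathsf{Inv}(\mathsf{Etale}(S)) \cong S$, so the real content is to decide when $\mathsf{Etale}(S)$ coincides with its own submonoid of partial units. Viewing the Boolean inverse monoid $S$ as a right restriction monoid via $a^{\ast} = a^{-1}a$, every element of $S$ is a partial unit, so $\mathsf{Inv}(S) = S$. Since the property of being a partial unit is defined purely by the right restriction operations, any isomorphism of right restriction monoids carries $\mathsf{Inv}$ onto $\mathsf{Inv}$; hence $\mathsf{Etale}(S) \cong S$ if and only if $\mathsf{Etale}(S) = \mathsf{Inv}(\mathsf{Etale}(S))$.

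First I would recall the explicit descriptions furnished by Section~5. By Lemma~\ref{lem:STJ}, a general element of $\mathsf{Etale}(S)$ has the form $\bigsqcup_{i=1}^{m} a_{i}^{\downarrow}$ for a left-compatible subset $\{a_{1},\ldots,a_{m}\}$ of $S$, while the final paragraph of the proof of Theorem~\ref{them:etale-main} shows that the partial units of $\mathsf{Etale}(S)$ are exactly the sets $a^{\downarrow}$ with $a \in S$, and that a join $(\{a_{1},\ldots,a_{m}\}^{\downarrow})^{\vee}$ is a partial unit precisely when the $a_{i}$ are pairwise compatible, in which case it equals $(a_{1} \vee \cdots \vee a_{m})^{\downarrow}$. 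Thus $\mathsf{Etale}(S) = \mathsf{Inv}(\mathsf{Etale}(S))$ holds exactly when every left-compatible join $\bigsqcup_{i} a_{i}^{\downarrow}$ collapses to a single $a^{\downarrow}$.

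With this in hand both implications are short. For the forward direction, assume $\mathsf{Etale}(S) \cong S$ and take $a \sim_{l} b$ in $S$. Since $\iota \colon S \to \mathsf{R}(S)$ is an embedding it reflects left-compatibility, and by the relevant claim in the proof of Proposition~\ref{prop:as} the relation $\sim_{l}$ is the same whether computed in $\mathsf{R}(S)$ or in $\mathsf{R}(S)_{\nu}$; hence $a^{\downarrow} \sim_{l} b^{\downarrow}$ and the join $a^{\downarrow} \sqcup b^{\downarrow}$ exists in $\mathsf{Etale}(S)$. By the reduction it is a partial unit, so by Theorem~\ref{them:etale-main} the elements $a$ and $b$ are compatible. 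For the converse, assume left-compatible elements of $S$ are compatible. Then any left-compatible family $\{a_{1},\ldots,a_{m}\}$ is pairwise compatible, so $a = \bigvee_{i} a_{i}$ exists in $S$, and one checks $\bigsqcup_{i} a_{i}^{\downarrow} = a^{\downarrow}$; thus every element of $\mathsf{Etale}(S)$ is a partial unit, giving $\mathsf{Etale}(S) = \mathsf{Inv}(\mathsf{Etale}(S)) \cong S$.

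The one place needing care — the main obstacle — is the identity $\bigsqcup_{i} a_{i}^{\downarrow} = a^{\downarrow}$ when $a = \bigvee_{i} a_{i}$, and the transfer of $\sim_{l}$ between $S$, $\mathsf{R}(S)$ and $\mathsf{Etale}(S)$. Both are handled by the formula $\bigsqcup_{i} a_{i}^{\downarrow} = \nu\!\left(\bigcup_{i} a_{i}^{\downarrow}\right) = \left(\bigcup_{i} a_{i}^{\downarrow}\right)^{\vee}$ read off from the proof of Proposition~\ref{prop:as}: since $a_{i} \leq a$ gives $\bigcup_{i} a_{i}^{\downarrow} \subseteq a^{\downarrow}$ and $a^{\downarrow}$ is $\nu$-closed, one inclusion is immediate, while $a \in \left(\bigcup_{i} a_{i}^{\downarrow}\right)^{\vee}$ together with the order-ideal property yields the other. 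Everything else is bookkeeping with the descriptions already established in Section~5.
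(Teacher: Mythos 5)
Your proof is correct, and its pivot is the same as the paper's: everything comes down to whether every element of $\mathsf{Etale}(S)$ is a partial unit. But the way you settle each half of that question differs from the paper in a way worth recording. For the implication ``$\mathsf{Etale}(S)\cong S$ implies every element is a partial unit,'' you simply transport $\mathsf{Inv}$ along the isomorphism using $\mathsf{Inv}(S)=S$; the paper instead argues intrinsically inside the abstract monoid $T\cong S$, producing the generalised inverse $b$ of an arbitrary $a$, expanding $ab$ and $ba$ as joins of partial units, and invoking Lemma~\ref{lem:holdo}(4) and Lemma~\ref{lem:posy}(2) to see these are projections. Your route is shorter but leans on the isomorphism being one of \emph{right restriction} monoids (so that it preserves the star and hence partial units); the paper's element-wise argument is what you would need if only a monoid isomorphism were assumed. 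For the deduction of compatibility, the paper again works abstractly: from $a\sim_{l}b$ in $T=\mathsf{Inv}(T)$ it forms $a\vee b$, applies Lemma~\ref{lem:holdo}(3), Lemma~\ref{lem:posy} and Lemma~\ref{lem:spiked}(1) to get $aa^{-1}b=bb^{-1}a$. You instead descend to the concrete model $\mathsf{fin}(\mathsf{R}(S)_{\nu})$ and reuse the computation at the end of the proof of Theorem~\ref{them:etale-main}, which shows that a partial unit of the form $(\{a_{1},\ldots,a_{m}\}^{\downarrow})^{\vee}$ forces the $a_{i}$ to be pairwise compatible. That is legitimate, and it has the virtue of making the easy direction genuinely explicit (the identity $\bigsqcup_{i}a_{i}^{\downarrow}=a^{\downarrow}$, which the paper dismisses as ``immediate from the construction''), at the cost of depending on details internal to another proof rather than on quotable lemmas. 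Two small points of hygiene: you say $\iota$ ``reflects'' left-compatibility where you in fact need that it \emph{preserves} it (both hold, by the second claim in the proof of Proposition~\ref{prop:completion} together with Lemma~\ref{lem:similarity}(2), but the direction you use is preservation); and in the forward direction you should note, via Lemma~\ref{lem:klavin}(2), that the join $a^{\downarrow}\sqcup b^{\downarrow}$ formed in $\mathsf{R}(S)_{\nu}$ is again finite, so that it really is an element of $\mathsf{Etale}(S)$ to which your reduction applies.
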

\begin{proof} Suppose first that in $S$ left-compatible elements are compatible.
Then it is immediate from the way that $\mathsf{Etale}(S)$ is constructed that
$\mathsf{Etale}(S) \cong S$. 
We now prove the converse.
Suppose that $\mathsf{Etale}(T) \cong S$.
Thus, we are given an \'etale Boolean right restriction monoid $T$ such that
$T \cong \mathsf{Inv}(S)$.
Thus $T$ is an  \'etale Boolean right restriction monoid which is also inverse.
We prove first that $T = \mathsf{Inv}(T)$.
We therefore need to prove that every element of $T$ is a partial unit.
Let $a \in T$.
Then there is a unique element $b \in T$ such that
$a = aba$ and $b = bab$.
By assumption
$$a = \bigvee_{i=1}^{m} a_{i} \text{ and } b = \bigvee_{j=1}^{m} b_{j},$$
where the $a_{i}$ and $b_{j}$ are partial units.
It follows that
$$ab =  \bigvee_{1 \leq i \leq m, 1 \leq j \leq n} a_{i}b_{j},$$
where $a_{i}b_{j}$ is a partial unit by part (1) of Lemma~\ref{lem:holdo}.
But $ab$ is an idempotent and so $a_{i}b_{j}$ is an idempotent.
By part (3) of  Lemma~\ref{lem:holdo}, it follows that  $a_{i}b_{j}$ 
is a projection. 
By part (2) of Lemma~\ref{lem:posy}, it follows that $ab$ is a projection.
By symmetry, $ba$ is a projection.
Thus $a$ is a partial unit.
We have therefore proved that $T = \mathsf{Inv}(T)$.
We can now finish off the proof.
Suppose that $a,b \in T$ are such that $a \sim_{l} b$.
Then $a \vee b$ exists and, by assumption, is a partial unit.
We have that $a,b \leq a \vee b$.
It follows by part (2) of Lemma~\ref{lem:holdo}, that
$a = aa^{-1}(a \vee b)$.
By Lemma~\ref{lem:posy}, we have that
$a = a \vee aa^{-1}b$.
Thus, in particular, $aa^{-1}b \leq a$.
But $bb^{-1}a \leq a$.
Now check that $(aa^{-1}b)^{\ast} = (bb^{-1}a)^{\ast}$,
using the fact that projections commute.
By part (1) of Lemma~\ref{lem:spiked}, we have that $aa^{-1}b = bb^{-1}a$.
Thus $a \sim_{r} b$.
We have therefore proved that $a \sim b$.
\end{proof}

The above result therefore highlights the class of inverse semigroups in which $\sim_{l} \, \subseteq \, \sim_{r}$.
The $E$-reflexive inverse semigroups, discussed in \cite[page 86]{Lawson1998}, are examples.

\section{Examples}

In this section, we shall illustrate the theory developed in this paper by describing two examples.

\subsection{Partial homeomorphisms of a Boolean space}

Let $X$ be a Boolean space.
Denote by $\mathsf{I}^{\tiny cl}(X)$ the set of all partial homeomorphisms between the clopen sets of $X$.
This is a Boolean inverse monoid by \cite[Proposition 2.16, Proposition 5.2]{Lawson2016}.
Let  $\mathsf{S}(X)$ be the set of all local homeomorphisms $\theta \colon U \rightarrow X$
where $U$ is clopen.
This is a Boolean right restriction monoid \cite[Example 5.6]{Lawson2021b}.
Observe that $U$ is compact and so $\theta (U)$ is compact.
But $X$ is Hausdorff.
It follows that $\theta (U)$ is also clopen. 
Thus  $\mathsf{S}(X)$ is the set of all surjective local homeomorphisms
between the clopen subsets of $X$.
Clearly, $\mathsf{I}^{\tiny cl}(X) \subseteq \mathsf{S}(X)$ and
the set of partial units of $\mathsf{S}(X)$ is $\mathsf{I}^{\tiny cl}(X)$.
Let $\theta \colon U \rightarrow V$ be a surjective local homeomorphism
between two clopen sets.
For each $x \in U$, there is an open set $U_{x} \subseteq U$ such that
$\theta$ restricted to this set is a homeomorphism.
But the clopens form a basis for the topology on $X$.
So, without loss of generality, we may assume that $U_{x}$ is clopen.
Thus, the $U_{x}$ cover $U$.
But $U$ is compact.
It follows that we can write $\theta$ as a finite union of partial homeomorphisms.
We have therefore shown that $\mathsf{S}(X)$ is \'etale.
Since the set of partial units of $\mathsf{S}(X)$ is  $\mathsf{I^{\tiny cl}}(X)$,
it follows by Theorem~\ref{them:one} that  $\mathsf{S}(X)$ is the companion of  $\mathsf{I}^{\tiny cl}(X)$;
that is, $\mathsf{S}(X) = \mathsf{Etale}(\mathsf{I}^{\tiny cl}(X))$.

Although not needed, we shall show how to abstractly construct  $\mathsf{S}(X)$ from $\mathsf{I}^{\tiny cl}(X)$.
Let $A = (\{f_{1},\ldots, f_{m}\}^{\downarrow})^{\vee}$ be a finite closed acceptable set in $\mathsf{I}^{\tiny cl}(X)$.
We have that $f_{1},\ldots, f_{m} \in \mathsf{I}(X)$
and to say that they are left-compatible simply means that $\theta = f_{1} \cup \ldots \cup f_{m}$ is a well-defined
partial function of $X$.
Put $U = U_{1} \cup \ldots \cup U_{n}$ where $U_{i}$ is the domain of definition of $f_{i}$.
Put $V = V_{1} \cup \ldots \cup V_{n}$ where $V_{i}$ is the range of $f_{i}$.
The sets $U$ and $V$ are both clopen and $\theta \colon U \rightarrow V$
is a surjective local homeomorphism.
We have therefore defined an element of $\mathsf{S}(X)$.
We show that $A = \theta^{\downarrow} \cap  \mathsf{I}(X)$.
Let $f \in A$.
Then $f$ is a compatible join of elements of 
$\{f_{1},\ldots, f_{m}\}^{\downarrow}$. 
But each element of
$\{f_{1},\ldots, f_{m}\}^{\downarrow}$
is below an element such as $f_{i}$ and so $f \leq \theta$.
In addition, a compatible join of partial homeomorphisms is itself a partial homeomorphism.
We have proved that $A \subseteq  \theta^{\downarrow} \cap  \mathsf{I}^{\tiny cl}(X)$.
To prove the reverse inclusion, let $f$ be any surjective partial homeomorphism between the clopens of $X$ where $f \subseteq \theta$.
Then $f$ is the compatible join of partial homemorphisms each of the form $f \cap f_{i}$.
Thus $f \in A$.
We have therefore demonstrated the construction.

\subsection{The classical Thompson groups $G_{n,1}$} 

This section bears an analogous relationship to \cite{Birget2009},
as \cite{Birget2004} does to my original paper \cite{Lawson2007b}.\footnote{I am grateful to Richard Garner for reminding me of this.}
We now apply the constructions of this paper to the Boolean inverse monoids that arise
in constructing the Thompson groups $G_{n,1}$.
These Boolean inverse monoids were first described in \cite{Lawson2007b}
and then in \cite{LS1} with a more general perspective provided by \cite{Lawson2021b}.
According to \cite{Scott}, the starting point for constructing the classical Thompson groups  
involves free actions \cite{KKM}\footnote{My thanks to Victoria Gould for supplying this reference.} of free monoids $A_{n}^{\ast}$. 
We shall simplify things by considering only those Boolean inverse monoids that arise in the construction of the groups $G_{n,1}$.
This involves free monoids alone.

We refer the reader to \cite{BP} for more on strings.\footnote{`Strings' is the term common in semigroup theory along with `words'. You might know these objects as `lists'.}
Let $A_{n}$ be an $n$-element alphabet.
We shall usually denote its elements by $A_{n} = \{a_{1}, \ldots, a_{n}\}$,
although in the case $n = 2$ we shall often write $A_{2} = \{a,b\}$.
We shall always assume that $n \geq 2$.
The free monoid on $A_{n}$ is denoted by $A_{n}^{\ast}$ with identity $\varepsilon$, the empty string,
and composition concatenation.
If $x,y \in A_{n}^{\ast}$ then we say that $x$ and $y$ are {\em (prefix) incomparable} if $xA_{n}^{\ast} \cap yA_{n}^{\ast} = \varnothing$,
otherwise they are said to be {\em (prefix) comparable}.
A {\em finite}  subset $X$ of $A_{n}^{\ast}$ is said to be a {\em prefix code}
if the elements are pairwise prefix incomparable.
A prefix code $X$ is said to be a {\em maximal prefix code} if every 
element of $A_{n}^{\ast}$ is comparable with an element of $X$.
The smallest maximal prefix code is $\{\varepsilon\}$, which we call the {\em trivial maximal prefix code}.
Observe that $A_{n}$ itself is a maximal prefix code
which we call  a {\em caret}.
The set of all strings over $A_{n}$ of length $l$
also forms a maximal prefix code.
We shall say that they are {\em uniform of length $l$} denoted by $A_{n}^{l}$.

Denote by $A_{n}^{\omega}$ the set of all right-infinite strings over the $n$-element alphabet $A_{n}$.
This set is equipped with the topology in which the open sets of $X$ are the subsets $XA_{n}^{\omega}$ where $X \subseteq A_{n}^{\ast}$.
With this topology $A_{n}^{\omega}$ is the {\em Cantor space} and is a Boolean space.
The clopen subsets are precisely those where $X$ is finite.

We shall now define an inverse subsemigroup of $\mathsf{I}^{\tiny cl}(A_{n}^{\omega})$.
If $x \in A_{n}^{\ast}$ then the function $\lambda_{x} \colon A_{n}^{\omega} \rightarrow xA_{n}^{\omega}$ 
is defined by $w \mapsto xw$ where $w \in A_{n}^{\omega}$.
Observe that $\lambda_{\varepsilon}$ is the identity function on $A_{n}^{\omega}$,
and if $x = x_{1} \ldots x_{n}$ where $x_{i} \in A_{n}$ then
$\lambda_{x} = \lambda_{x_{1}} \ldots \lambda_{x_{n}}$.
Each $\lambda_{x}$ is a partial homeomorphism of the Cantor space.
We denote by $P_{n}$ the inverse submonoid of $\mathsf{I}^{\tiny cl}(A_{n}^{\omega})$
generated by the maps $\lambda_{x}$.
This is called the {\em polycyclic monoid on $n$ letters}.
Observe that the elements of $P_{n}$ are always partial homeomorphisms
between clopen subsets of the Cantor space.\\

\noindent
{\bf Notation} We shall denote the inverse of $\lambda_{x}$ by $\lambda_{x^{-1}}$.
We shall write $yx^{-1}$ instead of $\lambda_{yx^{-1}}$. \\

A typical non-zero element of $P_{n}$ has the form  $\lambda_{yx^{-1}}$; we shall call such an element {\em basic}. 
This maps $xw$ to $yw$, where $w$ is a right-infinite string.
Inside $P_{n}$,
the product assumes the following form
$$
(yx^{-1})(vu^{-1}) = \begin{cases}
0 & \text { If } x \text{ and } y \text{ are prefix incomparable}\\
yzu^{-1} &  \text{ if } v = xz\\
y(uz)^{-1} & \text{ if } x = vz.
\end{cases}
$$The nonzero idempotents of $P_{n}$ are the elements of the form $xx^{-1}$.
The natural partial order is given by $yx^{-1} \leq vu^{-1}$ iff
$(y,x) = (v,u)p$ for some finite string $p$.
You can find out more about the polycyclic inverse monoid in \cite{Lawson1998}.
We can translate definitions from the free monoid into the polycyclic inverse monoid. The following was proved as \cite[Corollary 3.4]{Lawson2007a}.

\begin{lemma}\label{lem:pmr} The set $\{x_{1}, \ldots, x_{n}\}$ is a prefix code in $A_{n}^{\ast}$
iff $\{x_{1}x_{1}^{-1}, \ldots, x_{n}x_{n}^{-1} \}$ is an orthogonal set in $P_{n}$.
\end{lemma}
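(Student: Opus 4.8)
The plan is to reduce the statement to a single pair of indices and then evaluate a product of idempotents in $P_n$. By definition, $\{x_{1}, \ldots, x_{n}\}$ is a prefix code precisely when $x_{i}$ and $x_{j}$ are prefix incomparable for every pair $i \neq j$, and $\{x_{1}x_{1}^{-1}, \ldots, x_{n}x_{n}^{-1}\}$ is an orthogonal set precisely when $(x_{i}x_{i}^{-1})(x_{j}x_{j}^{-1}) = 0$ for every pair $i \neq j$. As both conditions are conjunctions over the same set of index pairs, it suffices to prove, for fixed $i \neq j$, that $x_{i}$ and $x_{j}$ are prefix incomparable if and only if $(x_{i}x_{i}^{-1})(x_{j}x_{j}^{-1}) = 0$.

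For this pairwise equivalence I would compute the product $(x_{i}x_{i}^{-1})(x_{j}x_{j}^{-1})$ directly from the multiplication rule for $P_{n}$ recorded above, applied to the two basic elements $x_{i}x_{i}^{-1}$ and $x_{j}x_{j}^{-1}$. There are exactly three possibilities. If $x_{i}$ and $x_{j}$ are prefix incomparable, the product is $0$. If $x_{j} = x_{i}z$ for some string $z$, the product is $x_{i}z\,x_{j}^{-1} = x_{j}x_{j}^{-1}$. If $x_{i} = x_{j}z$ for some string $z$, the product is $x_{i}(x_{j}z)^{-1} = x_{i}x_{i}^{-1}$. Since $x_{i}x_{i}^{-1}$ and $x_{j}x_{j}^{-1}$ are nonzero idempotents, the product vanishes exactly in the first case, that is, exactly when $x_{i}$ and $x_{j}$ are prefix incomparable.

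The one point requiring attention is that prefix incomparability is defined by $x_{i}A_{n}^{\ast} \cap x_{j}A_{n}^{\ast} = \varnothing$, whereas the three cases above are phrased via the prefix relation. I would observe that any common element $x_{i}p = x_{j}q$ of $x_{i}A_{n}^{\ast} \cap x_{j}A_{n}^{\ast}$ forces one of $x_{i}, x_{j}$ to be a prefix of the other (two prefixes of a common string are comparable), while conversely $x_{j} = x_{i}z$ yields $x_{j} \in x_{i}A_{n}^{\ast} \cap x_{j}A_{n}^{\ast}$, and symmetrically. Hence the intersection is nonempty exactly in the second and third cases, so prefix incomparability coincides with the first case. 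Assembling the equivalence over all pairs $i \neq j$ gives the lemma. I do not anticipate a genuine obstacle: the content is a direct reading of the $P_{n}$ multiplication formula, and the only care needed is to align its case analysis with the definition of prefix incomparability and to note that both the prefix-code and orthogonality conditions are pairwise. (Equivalently, one may argue topologically, since $x_{i}x_{i}^{-1}$ is the identity on the clopen set $x_{i}A_{n}^{\omega}$, so the product is the identity on $x_{i}A_{n}^{\omega} \cap x_{j}A_{n}^{\omega}$, which is empty iff $x_{i}, x_{j}$ are prefix incomparable.)
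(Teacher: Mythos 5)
Your proof is correct. The paper does not actually prove this lemma itself---it simply cites \cite[Corollary 3.4]{Lawson2007a}---so there is no in-paper argument to compare against; your reduction to a single pair of indices, the direct evaluation of $(x_{i}x_{i}^{-1})(x_{j}x_{j}^{-1})$ from the displayed multiplication rule for $P_{n}$, and the check that prefix incomparability (defined via $x_{i}A_{n}^{\ast} \cap x_{j}A_{n}^{\ast} = \varnothing$) coincides with the ``neither is a prefix of the other'' case together form a sound, self-contained substitute. One small point in your favour: you have implicitly read the first case of the displayed product formula as requiring the \emph{inner} strings (there written $x$ and $v$) to be prefix incomparable, which is the correct condition; as printed in the paper the case reads ``$x$ and $y$'', a typo that would give nonsense when applied to two idempotents.
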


Although the inverse monoid $P_{n}$ is important, it is not the inverse monoid we want.
We define this now.
Put $C_{n} = P_{n}^{\vee}$, within $\mathsf{I}^{\tiny cl}(A_{n}^{\omega})$.
This is a Boolean inverse monoid that we call the {\em Cuntz inverse monoid (of degree n)}.
The group of units of this Boolean inverse monoid is, in fact, the Thompson group $G_{n,1}$,
(although this will play no role in what we do).
See \cite{Lawson2007b} and also \cite{LS1}. 
The idempotents of $C_{n}$ are the identity functions defined on the clopen subsets of the Cantor space.
In a Boolean inverse monoid, every finite join is equal to a finite orthogonal join
by Lemma~\ref{lem:left-orthogonal} and its dual.
We now apply Lemma~\ref{lem:pmr}.
Let $X = \{x_{1}, \ldots, x_{r}\}$ and $Y = \{y_{1}, \ldots, y_{r}\}$
be prefix codes with the same number of elements both regarded as being ordered
as the elements are listed.
Define a function $f^{X}_{Y} \colon XA_{n}^{\omega} \rightarrow YA_{n}^{\omega}$
by $f^{X}_{Y}(x_{i}w) = y_{i}w$ where $w \in A_{n}^{\omega}$.
Every element of $C_{n}$ can be represented in this form.

We now describe the companion of $C_{n}$.
We shall work inside $\mathsf{S}(A_{n}^{\omega})$.
Define $H_{n}$ to be the union of all finite left-compatible sets of basic functions.
Clearly, $H_{n}$ is a right restriction monoid
whose set of projections forms a Boolean algebra.
Furthermore, $C_{n} \subseteq H_{n}$.
We can use Lemma~\ref{lem:left-orthogonal} to obtain a representation
of the elements of $H_{n}$.
Let $X$ be a prefix code and let $Y$ be any finite set (and so not a prefix code in general) with the same cardinality as $X$,  
where $X = \{x_{1},\ldots, x_{p}\}$ and $Y = \{y_{1}, \ldots, y_{p}\}$.
Define a function $f^{X}_{Y} \colon XA_{n}^{\omega} \rightarrow YA_{n}^{\omega}$
by $f^{X}_{Y}(x_{i}w) = y_{i}w$ where $w \in A_{n}^{\omega}$.
Every element of $H_{n}$ can be represented in this form.
The proof of (1) below is immediate from the definition
and the proof of (2) below follows from (1).

\begin{lemma}\label{lem:semigroup-three}\mbox{} 
\begin{enumerate}
\item The semigroup  $H_{n}$  is closed under binary left-compatible joins.

\item A finite left-compatible join in $H_{n}$ is closed under right multiplication by elements of $H_{n}$.

\end{enumerate}
\end{lemma}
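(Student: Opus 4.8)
The plan is to unwind the definition of $H_n$ in both parts and lean on the compatibility calculus of Lemma~\ref{lem:similarity}, together with the product formula for basic functions inside the polycyclic monoid $P_n$. For part (1), I would take $a,b \in H_n$ with $a \sim_l b$ and write each as a finite left-compatible join of basic functions, say $a = \bigvee_{i=1}^m f_i$ and $b = \bigvee_{j=1}^n g_j$. It then suffices to show that the pooled family $\{f_1,\ldots,f_m,g_1,\ldots,g_n\}$ is left-compatible, for its join is precisely $a \vee b$ and hence lies in $H_n$ by definition. The $f_i$ are pairwise left-compatible, and so are the $g_j$; the only new comparisons are the cross terms $f_i \sim_l g_j$, and these follow from Lemma~\ref{lem:similarity}(2), since $f_i \leq a$, $g_j \leq b$ and $a \sim_l b$. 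This is exactly why part (1) is immediate from the definition.

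For part (2), I would first record that the product of two basic functions is either $0$ or again basic, by the product formula for $P_n$ displayed above. Consequently, for a basic $f$ and an element $b = \bigvee_l g_l \in H_n$, left distributivity (Lemma~\ref{lem:posy}(3), in the finite case) gives $fb = \bigvee_l f g_l$, a join of basic functions which is left-compatible by Lemma~\ref{lem:similarity}(1) (using $g_l \sim_l g_{l'}$ and the trivial $f \sim_l f$); hence $fb \in H_n$. Now let $a = \bigvee_{i=1}^m f_i$ be a finite left-compatible join in $H_n$, with the $f_i$ basic, and let $b \in H_n$. Right distributivity yields $ab = \bigvee_{i=1}^m f_i b$; each $f_i b \in H_n$ by the previous step, and the family $\{f_i b\}$ is again left-compatible by Lemma~\ref{lem:similarity}(1) (from $f_i \sim_l f_s$ and $b \sim_l b$). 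Part (1), iterated to finite joins, then assembles $ab = \bigvee_i f_i b$ inside $H_n$, which is the asserted closure under right multiplication. This is how part (2) follows from part (1).

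The only delicate point --- and the step I expect to be the main obstacle --- is the bookkeeping that rewrites an arbitrary product as a finite left-compatible join of basic functions. Once the two applications of distributivity (from the right by definition, from the left by Lemma~\ref{lem:posy}(3)) are in place, this reduces entirely to the single combinatorial fact that a product of two basic functions is basic or zero. Everything else is a formal consequence of Lemma~\ref{lem:similarity} and the binary-join closure established in part (1), so no genuinely new computation is required.
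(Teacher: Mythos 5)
Your proof is correct and fills in exactly the details the paper leaves implicit: the paper's entire ``proof'' is the sentence that (1) is immediate from the definition and (2) follows from (1), and your pooling argument via Lemma~\ref{lem:similarity}(2) for part (1), together with the reduction of part (2) to the fact that a product of basic functions is basic or zero plus two-sided distributivity in the ambient monoid $\mathsf{S}(A_{n}^{\omega})$, is the intended route. No discrepancies.
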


The above lemma shows that $H_{n}$ is a Boolean right restriction monoid.
It is \'etale by construction.

\begin{lemma}\label{lem:semigroup-four} 
$\mathsf{Inv}(H_{n}) = C_{n}$.
\end{lemma}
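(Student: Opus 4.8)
The plan is to characterise the partial units of $H_n$ as the injective elements, and then to show that injectivity forces the defining data of such an element to be a prefix code — which is precisely the condition that places it in $C_n$. Throughout I would work inside $\mathsf{S}(A_n^\omega)$, using the representation $f^X_Y$ of elements of $H_n$ recorded above together with the fact, noted in Section~6.1, that the partial units of $\mathsf{S}(A_n^\omega)$ are exactly the partial homeomorphisms $\mathsf{I}^{\tiny cl}(A_n^\omega)$. Since $H_n \subseteq \mathsf{S}(A_n^\omega)$ and an inverse witnessing a partial unit of $H_n$ already lies in $H_n$, any partial unit of $H_n$ is in particular a partial unit of $\mathsf{S}(A_n^\omega)$, hence an injective partial homeomorphism.

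The inclusion $C_n \subseteq \mathsf{Inv}(H_n)$ is straightforward: we have $C_n \subseteq H_n$, and since $C_n$ is an inverse monoid of partial homeomorphisms, each $f \in C_n$ has inverse $f^{-1} \in C_n \subseteq H_n$, so $f$ is a partial unit of $H_n$.

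The substance is the reverse inclusion. I would take $f \in \mathsf{Inv}(H_n)$ and write $f = f^X_Y$ with $X = \{x_1,\ldots,x_p\}$ a prefix code and $Y = \{y_1,\ldots,y_p\}$ a finite set of the same cardinality. Recalling that the elements of $C_n$ are precisely those $f^X_Y$ for which $Y$ too is a prefix code, it suffices to prove that $f^X_Y$ is injective if and only if $Y$ is a prefix code. One direction is easy: if $Y$ is a prefix code then the cylinders $y_i A_n^\omega$ are pairwise disjoint and each map $w \mapsto y_i w$ is injective, so $f^X_Y$ separates the points of $XA_n^\omega$.

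The main obstacle is the converse, and it is the only step requiring care. Arguing contrapositively, suppose $Y$ is not a prefix code; then there are indices $i \neq j$ with $y_i$ and $y_j$ prefix comparable, say $y_j = y_i z$ with $z \neq \varepsilon$. For every $w \in A_n^\omega$ we then have $f^X_Y(x_i z w) = y_i z w = y_j w = f^X_Y(x_j w)$, whereas $x_i z w \neq x_j w$ because $x_i$ and $x_j$ are distinct, hence prefix incomparable, members of the prefix code $X$. Thus $f^X_Y$ fails to be injective. Since the element $f \in \mathsf{Inv}(H_n)$ we started with is injective, $Y$ must be a prefix code, so $f \in C_n$; combined with the previous inclusion this yields $\mathsf{Inv}(H_n) = C_n$.
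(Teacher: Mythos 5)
Your proposal is correct and follows essentially the same route as the paper: both reduce the question to showing that a partial unit $f^X_Y$ of $H_n$ must be injective and that injectivity forces $Y$ to be a prefix code, via the identical contrapositive argument with comparable $y_i, y_j$. The only difference is that you spell out the easy inclusion $C_n \subseteq \mathsf{Inv}(H_n)$ and the converse direction of the injectivity criterion, which the paper leaves implicit.
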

\begin{proof} From the definition, partial units in $H_{n}$ must be injective functions on their domains of definition.
Let $f^{X}_{Y}$ be a partial unit in $H_{n}$.
We show that $Y$ is also a prefix code.
Suppose not. 
Then there exists $y_{i}$ and $y_{j}$ which are prefix comparable where $i \neq j$.
Without loss of generality, we may
suppose that $y_{i} = y_{j}p$ for some non-empty finite string $p$.
Then for any right-infinite string $w$, we have that $x_{j}pw$ and $x_{i}w$ are mapped to the same element $y_{j}pw$
by $f^{X}_{Y}$ and so this function is not injective.
\end{proof}

We now summarize what we have proved
in Lemma~\ref{lem:semigroup-three} and Lemma~\ref{lem:semigroup-four}.

\begin{theorem}\label{thm:h2}
The monoid $H_{n}$ is an \'etale Boolean right restriction monoid
with monoid of partial units isomorphic to $C_{n}$.
In particular, $H_{n}$ is the companion of $C_{n}$. 
\end{theorem}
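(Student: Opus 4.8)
The plan is to assemble the statement from the material already in hand, since Theorem~\ref{thm:h2} is really a summary of Lemma~\ref{lem:semigroup-three} and Lemma~\ref{lem:semigroup-four} read through the structural results of Section~5. First I would confirm that $H_{n}$ is a Boolean right restriction monoid. The remarks preceding Lemma~\ref{lem:semigroup-three} already record that $H_{n}$ is a right restriction submonoid of $\mathsf{S}(A_{n}^{\omega})$ whose projections are the identity functions on clopen subsets of the Cantor space; these projections therefore sit inside the genuine Boolean algebra of clopens of $A_{n}^{\omega}$. What remains for the distributive (hence Boolean) structure is closure under binary left-compatible joins together with right distributivity of multiplication over such joins, and these are exactly parts (1) and (2) of Lemma~\ref{lem:semigroup-three}.

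Next I would verify that $H_{n}$ is \'etale. By its very definition $H_{n}$ is the union of all finite left-compatible sets of basic functions, so every element of $H_{n}$ is a finite left-compatible join of basic functions $\lambda_{yx^{-1}}$. Each basic function is a partial homeomorphism between clopen subsets of the Cantor space, so it lies in $C_{n} \subseteq \mathsf{Inv}(H_{n})$ and is in particular a partial unit. Thus every element of $H_{n}$ is a finite join of partial units, which is precisely the definition of \'etale.

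Finally, the identification of the partial units is Lemma~\ref{lem:semigroup-four}, namely $\mathsf{Inv}(H_{n}) = C_{n}$. Having established that $H_{n}$ is an \'etale Boolean right restriction monoid with $\mathsf{Inv}(H_{n}) \cong C_{n}$, the companion assertion is then immediate from Theorem~\ref{thm:referee}, which tells us that every \'etale Boolean right restriction monoid is, up to isomorphism, the companion of its Boolean inverse monoid of partial units. Hence $H_{n} = \mathsf{Etale}(C_{n})$.

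I expect no real obstacle here, since all of the genuine content lives in the two cited lemmas and in the general machinery of Section~5. The only points needing care are to observe that the basic functions $\lambda_{yx^{-1}}$ are themselves partial units, so that the generating set exhibiting \'etaleness consists of partial units, and to note that the projections of $H_{n}$ coincide with the full Boolean algebra of clopens of $A_{n}^{\omega}$ rather than merely a distributive sublattice; with these remarks the three clauses of the theorem follow by direct appeal to the earlier results.
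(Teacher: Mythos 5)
Your proposal is correct and follows essentially the same route as the paper: the theorem is stated there precisely as a summary of Lemma~\ref{lem:semigroup-three} (Boolean right restriction structure), the observation that $H_{n}$ is \'etale by construction since its elements are finite left-compatible joins of basic functions (which are partial units), and Lemma~\ref{lem:semigroup-four} ($\mathsf{Inv}(H_{n}) = C_{n}$), with the companion assertion then following from the general theory of Section~5. Nothing is missing.
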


Before I plunge into the next remark, I need to say something about the structure of maximal prefix codes in $A_{n}^{\ast}$.
Everything I say is well-known, although I also give references to where proofs can be found. 
To determine when a prefix code $X$ is maximal, the following result is useful.
For a proof, see \cite[Proposition~1.2]{Lawson2021b}.

\begin{lemma}\label{lem:det-when-mpc} Working over the alphabet $A_{n}$,
the prefix code $X$ is maximal iff $XA_{n}^{\omega} = A_{n}^{\omega}$.
\end{lemma}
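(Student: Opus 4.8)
The plan is to translate both sides of the equivalence into elementary statements about prefixes and argue directly. The first thing I would record is the dictionary between the topological and combinatorial formulations: for finite strings $x,y \in A_{n}^{\ast}$, prefix comparability (that is, $xA_{n}^{\ast} \cap yA_{n}^{\ast} \neq \varnothing$) is equivalent to one of $x,y$ being a prefix of the other, since any common extension $xu = yv$ forces, by comparing lengths, the shorter string to be a prefix of the longer. In the same vein, a right-infinite string $w$ lies in $XA_{n}^{\omega}$ precisely when some $x \in X$ is a prefix of $w$. With these two reformulations in hand the lemma becomes purely a matter of prefixes.

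For the forward implication I would assume $X$ is maximal and take an arbitrary $w \in A_{n}^{\omega}$. Since $X$ is finite, I set $L = \max_{x \in X} |x|$ and let $p$ be the prefix of $w$ of length $L$. Maximality supplies an $x \in X$ comparable with $p$; because $|x| \leq L = |p|$, comparability forces $x$ to be a prefix of $p$, hence of $w$. Thus $w \in XA_{n}^{\omega}$, and as $w$ was arbitrary we obtain $XA_{n}^{\omega} = A_{n}^{\omega}$.

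For the converse I would assume $XA_{n}^{\omega} = A_{n}^{\omega}$ and take an arbitrary $u \in A_{n}^{\ast}$. Choosing any infinite extension $w = uz$ with $z \in A_{n}^{\omega}$, the hypothesis yields some $x \in X$ with $x$ a prefix of $w$. Now $x$ and $u$ are both prefixes of the common string $w$, so the shorter of the two is a prefix of the longer; in either case $u$ and $x$ are comparable. Since $u$ was arbitrary, $X$ is maximal.

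The argument is essentially bookkeeping about prefixes, so I do not expect a genuine obstacle; the only step deserving emphasis is the forward direction, where the \emph{finiteness} built into the definition of a prefix code is exactly what furnishes the uniform length bound $L$ that collapses comparability to ``$x$ is a prefix of $p$''. Without that bound one could not pass from comparability with arbitrarily long prefixes to membership in $XA_{n}^{\omega}$, so I would flag the use of finiteness explicitly at that point.
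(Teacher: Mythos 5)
Your proof is correct. The paper does not include an argument for this lemma at all --- it simply cites \cite[Proposition~1.2]{Lawson2021b} --- so your self-contained version, with the length-bound argument in the forward direction exploiting the finiteness built into the definition of a prefix code, and the observation in the converse that two prefixes of a common infinite string are comparable, is exactly the standard proof one would expect to find in that reference.
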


Let $X$ be a maximal prefix code.
We define two operations on $X$.
\begin{enumerate}

\item Choose any $x \in X$.
Then $X^{+} = X \setminus \{x\} \cup xA_{n}$ is also a maximal prefix code,
which we say is obtained from $X$ by means of {\em expansion by a caret} \cite[Lemma 5.4]{LS1}.

\item Suppose that there is a string $x$  such that $xA_{n} \subseteq X$.
(Such a string $x$ always exists if $X$ is not trivial.)
Then $X^{-} = (X \setminus xA_{n}) \cup \{x\}$ is a maximal prefix code,
which we say is obtained from $X$ by means of {\em reduction by a caret} \cite[Lemma 5.4]{LS1}.
\end{enumerate}

\begin{example}\label{ex:leo}
{\em Let $n = 2$ and $A_{2} = \{a,b\}$.
Then $X = \{a,ba,bb\}$ is a maximal prefix code.
In this case, $X^{-} = \{a,b\}$,
where we chose $x = b$,
and $X^{+} = \{aa,ab,ba,bb\}$, where we chose $x = a$.}
\end{example}

Every maximal prefix code is either the trivial maximal prefix code, $\{\varepsilon\}$,
or is obtained from the trivial maximal prefix code by a finite sequence of caret expansions \cite[Theorem 3.3]{Lawson2021b}.

\begin{example}\label{ex:taurus}
{\em We can construct the maximal prefix code $X$ of Example~\ref{ex:leo}
by using the following sequence of caret expansions:
$\{\varepsilon \} \rightarrow \{a,b\} \rightarrow \{a,ba,bb\}$. 
}
\end{example}

Every maximal prefix code may be expanded to one which is uniform
by inserting suitable carets.
We saw this in Example~\ref{ex:leo}.
We can deduce from this that if $X$ and $X'$ are maximal prefix codes over the same alphabet
then $X'$ can be obtained from $X$ by means of some finite sequence of caret expansions and caret reductions.
To see why, choose an $l \geq 0$ equal to or larger than the longest string in $X \cup X'$.
Then both $X$ and $X'$ lead to a uniform maximal prefix code $W$ where the length of every string is $l$.
We can pass from $X$ to $W$ by means of a sequence of caret expansions and from $W$ to $X'$ by a sequence of caret reductions. 

Using the idea of caret expansion, 
we can show that there is a close connection between maximal prefix codes over $A_{n}$ and expressions involving variables and operators $\lambda$ of arity $n$.
For our later purposes, $\lambda$ will be written to the right of its arguments. 

\begin{lemma}\label{lem:max-pc}
Every non-trivial maximal prefix code over $A_{n}$ arises from 
an expression involving variables and an operator $\lambda$ of arity $n$.
\end{lemma}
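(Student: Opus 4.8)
The plan is to realize $\lambda$ as a genuine $n$-ary operation on maximal prefix codes and then to show that every non-trivial maximal prefix code lies in the subalgebra this operation generates from the trivial code $\{\varepsilon\}$, which plays the role of a variable. Concretely, for maximal prefix codes $X_{1},\ldots,X_{n}$ I would define
\[
\lambda(X_{1},\ldots,X_{n}) = \bigcup_{i=1}^{n} a_{i}X_{i},
\]
where $a_{i}X_{i} = \{a_{i}w \colon w \in X_{i}\}$; written with the operator to the right this is $X_{1}\ldots X_{n}\lambda$. A single variable is interpreted as $\{\varepsilon\}$, so that $x\ldots x\,\lambda = A_{n}$ is precisely the caret.

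First I would check that $\lambda$ preserves maximal prefix codes. That $\lambda(X_{1},\ldots,X_{n})$ is a prefix code is immediate: two words sharing first letter $a_{i}$ come from $X_{i}$ and so are prefix incomparable, while words with distinct first letters are trivially incomparable. For maximality I would invoke Lemma~\ref{lem:det-when-mpc}: since $A_{n}^{\omega} = \bigsqcup_{i=1}^{n} a_{i}A_{n}^{\omega}$ and $(a_{i}X_{i})A_{n}^{\omega} = a_{i}(X_{i}A_{n}^{\omega})$, one gets $\lambda(X_{1},\ldots,X_{n})A_{n}^{\omega} = A_{n}^{\omega}$ exactly when each $X_{i}A_{n}^{\omega}=A_{n}^{\omega}$, i.e. exactly when each $X_{i}$ is maximal.

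The decomposition step is the converse reading of the same identity, and this is where I expect the only real care to be needed. Let $X$ be a non-trivial maximal prefix code; then $\varepsilon \notin X$ (a prefix code containing $\varepsilon$ equals $\{\varepsilon\}$), so every word of $X$ begins with a letter. Putting $X_{i} = \{w \in A_{n}^{\ast} \colon a_{i}w \in X\}$ gives $X = \bigsqcup_{i=1}^{n} a_{i}X_{i} = \lambda(X_{1},\ldots,X_{n})$, and each $X_{i}$ is a prefix code as above. Applying Lemma~\ref{lem:det-when-mpc} together with the disjoint decomposition $A_{n}^{\omega} = \bigsqcup_{i} a_{i}A_{n}^{\omega}$ shows each $X_{i}$ is maximal; the subtle point is ruling out $X_{i} = \varnothing$, since an empty $X_{i}$ would leave $a_{i}A_{n}^{\omega}$ uncovered in $XA_{n}^{\omega}$, contradicting the maximality of $X$.

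Finally I would induct on $\max_{x \in X}|x|$, the length of the longest word of $X$. If this length is $1$ then $X = A_{n} = x\ldots x\,\lambda$. Otherwise each $X_{i}$ satisfies $\max_{w\in X_{i}}|w| < \max_{x\in X}|x|$, so $X_{i}$ is either the trivial code $\{\varepsilon\}$, represented by a variable, or by the inductive hypothesis arises from a $\lambda$-expression; substituting these $n$ expressions into $\lambda$ yields an expression for $X = \lambda(X_{1},\ldots,X_{n})$. This repackages the caret-expansion description of maximal prefix codes illustrated in Example~\ref{ex:taurus}: the resulting $\lambda$-expression is exactly the parse tree whose internal nodes are the carets used to build $X$ from $\{\varepsilon\}$.
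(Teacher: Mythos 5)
Your proof is correct, and it takes a genuinely different route from the paper's. The paper argues from expression to code: it draws the parse tree of the expression, erases the variable leaves, reads the prefix code off the root-to-leaf paths, and then settles surjectivity in one sentence by appealing to the external fact that every non-trivial maximal prefix code is obtained from $\{\varepsilon\}$ by a finite sequence of caret expansions. You instead make $\lambda$ an honest $n$-ary operation on maximal prefix codes, check via Lemma~\ref{lem:det-when-mpc} that it preserves maximality, and prove surjectivity directly through the top-down decomposition $X=\bigsqcup_{i}a_{i}X_{i}$ with induction on the length of the longest word. What this buys is a self-contained argument: the disjointness of the sets $a_{i}A_{n}^{\omega}$ forces each $X_{i}A_{n}^{\omega}=A_{n}^{\omega}$, which simultaneously gives maximality and non-emptiness of each $X_{i}$ --- precisely the point the paper's one-line citation conceals. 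The two inductions construct the same tree (yours from the root downwards, caret expansion from the leaves upwards), so the content is ultimately the same. One cosmetic discrepancy worth noting: under your denotational reading a variable contributes the string of its own leaf, whereas the paper reads the code off the tree \emph{after} erasing the variable leaves, so the same expression is assigned different codes under the two conventions (your $(x,(y,z)\lambda)\lambda$ already yields $\{a,ba,bb\}$, for which the paper uses the larger expression $((x,y)\lambda,((u,v)\lambda,(s,t)\lambda)\lambda)\lambda$). Since the lemma only asserts that every non-trivial maximal prefix code arises from \emph{some} expression, this does not affect correctness.
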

\begin{proof}
Let $t$ be such an expression.
Draw the rooted tree of $t$ where the leaves are labelled by the variables
and all other vertices are labelled by $\lambda$.
Erase all leaves and their edges.
In this way, we obtain a tree with all vertices labelled by $\lambda$
in which, apart from the leaves, every vertex has $n$-descendants.
We can now construct a maximal prefix code. 
Label the edges descending from a vertex by $a_{1}, \ldots, a_{n}$, respectively. 
By composing along adjacent sequences of edges, we obatain a set of strings
which forms a maximal prefix code.
That we obtain all maximal prefix codes in this way
follows from the fact that every non-trivial maximal prefix code arises
from the trivial maximal prefix code by caret expansions.
\end{proof}

\begin{example}{\em The maximal prefix code corresponding to
$$t = ((x,y)\lambda, ((u,v)\lambda, (s,t)\lambda)\lambda)\lambda$$ 
is
$\{a,ba,bb\}$.
}
\end{example}

\begin{remark}{\em We now make the connection with our paper \cite{Lawson2021b} (but, with the benefit of hindsight, our presentation here is a little sharper)
and the important papers \cite{G1, G2} which locate part of operator theory
within classical group theory.
We begin with the \'etale Boolean right restriction monoid $H_{n}$.
The elements of $H_{n}$ which belong to $\mathsf{Tot}(H_{n})$ have the form
$f^{X}_{Y}$ where $X$ is a maximal prefix code by Lemma~\ref{lem:det-when-mpc}.
We now define a class of structures from universal algebra,
which, at first sight, seems to have nothing to do with \'etale Boolean right restriction monoids.

By a {\em Cantor algebra} we mean a structure of universal algebra $(X,\alpha_{1}, \ldots, \alpha_{n}, \lambda)$
where $\alpha_{1}, \ldots, \alpha_{n}$ are unary operations and $\lambda$ is an $n$-operation 
(it is convenient to write all operations on the right of their arguments) 
satisfying the two laws below where $x,x_{1}, \ldots, x_{n} \in X$:
\begin{description}
\item[CA1] $(x\alpha_{1}, \ldots, x\alpha_{n}) \lambda = x$
\item[CA2] $(x_{1}, \ldots, x_{n})\lambda \alpha_{i} = x_{i}$. 
\end{description}
If $u \in \{1, \ldots, n\}^{\ast}$ is a non-empty string such that $u = x_{1} \ldots x_{m}$
where $x_{j} \in \{1, \ldots, n\}$ then we write $\alpha_{u}$ instead of $\alpha_{x_{1}} \ldots \alpha_{x_{n}}$.
As with any entity from universal algebra, we can choose a set of variables
and then define the set of terms over those variables in the Cantor algebra.
I shall denote by $\mathcal{C}[x]$ all such terms in the variable $x$.
For example, if $n = 2$, then an example of a  term in $\mathcal{C}[x]$  is $((x\alpha_{1}, x\alpha_{2})\lambda \alpha_{2}, (x\alpha_{2}\alpha_{1}, x\alpha_{1})\lambda ) \lambda$. 
However, because of the law (CA2), we can represent every element of  $\mathcal{C}[x]$ 
by means of an  $n$-ary tree whose leaves are labelled by elements of the form $x\alpha_{u}$
and whose vertices are labelled by $\lambda$.

The connection between Cantor algebras and the particular \'etale Boolean right restriction monoid $H_{n}$ now follows.
Let $A_{n} = \{a_{1}, \ldots, a_{n}\}$.
If $f \in \mathsf{Tot}(H_{n})$ define 
$$f\alpha_{i} = f \lambda_{a_{i}}\footnote{There is a typo on page 209 of \cite{Lawson2021b}.}.$$  
If $f_{1}, \ldots, f_{n} \in \mathsf{Tot}(H_{n})$ define 
$$(f_{1}, \ldots, f_{n})\lambda = f_{1}\lambda_{a_{1}}^{-1} \cup \ldots \cup f_{n}\lambda_{a_{n}}^{-1}.$$
These operations are well-defined on $\mathsf{Tot}(H_{n})$. 
Law (CA1) holds because
$$a_{1}a_{1}^{-1} \vee \ldots \vee a_{n}a_{n}^{-1} = 1,$$
and law (CA2) holds because
$a_{j}a_{j}^{-1} \perp a_{i}a_{i}^{-1}$ if $i \neq j$ and $a_{i}^{-1}a_{i} = 1$.
Thus we obtain a Cantor algebra because $H_{n}$ contains a copy of $P_{n}$.
If $x = a_{i}$ then it will be convenient to put $\alpha_{x} = \alpha_{i}$.
More generally, if $x = x_{1} \ldots x_{m}$ where $x_{i} \in A_{n}$ then define
$f \alpha_{x} = f\alpha_{x_{1}} \ldots \alpha_{x_{m}}$.
We prove four things about this Cantor algebra,
the details of which can be found in \cite{Lawson2021b}.
The central idea is based on Lemma~\ref{lem:max-pc}
and the fact that in Cantor algebras, the law (CA2) implies that
the terms of a Cantor algebra are always equal to terms of a particular form.

1. {\em The Cantor algebra, $\mathsf{Tot}(H_{n})$ is generated by $1 = \varepsilon \varepsilon^{-1}$.}
(In what follows, when we write a string $x$ we always mean $\lambda_{x}$.)  
We begin by defining special elements of $\mathsf{Tot}(H_{n})$ called {\em $\lambda$-expressions}:
\begin{enumerate}
\item If $y_{1}, \ldots, y_{n}$ are finite strings (recall my point above) then $(y_{1}, \ldots, y_{n})\lambda$ is a $\lambda$-expression;
this is just the element $\bigvee_{i=1}^{n} y_{i}a_{i}^{-1}$ of $\mathsf{Tot}(H_{n})$.
\item If $f_{1}, \ldots, f_{n}$ are $\lambda$-expressions, so too is $(f_{1}, \ldots, f_{n})\lambda$. 
\item Every $\lambda$-expression is obtained in a finite number of steps using rules (1) and (2) above.
\end{enumerate} 
Observe that every $\lambda$-expression is an element of $\mathsf{Tot}(H_{n})$.
An element $f$ of $\mathsf{Tot}(H_{n})$ can be written $f = f^{X}_{Y}$ where $X$ is a maximal prefix code.
In what follows, we shall assume that $X$ is a non-trivial maximal prefix code. 
There is therefore a finite string $x$ such that $xA_{n} \subseteq X$ by the proof of  \cite[Theorem 3.4]{Lawson2021b}.
Put $X^{-} = X \setminus xA_{n} \cup \{x\}$, a smaller maximal prefix code.
Then $X = (X^{-} \setminus \{x\}) \cup xA_{n}$.
By assumption, $xa_{1}, \ldots, xa_{n} \in X$.
If the elements of $Y$ corresponding to these elements are $y_{1}', \ldots, y_{n}'$, respectively,
then $\bigvee_{i=1}^{n} y_{i}'(xa_{i})^{-1} = \left( \bigvee_{i=1}^{n} y_{i}'a_{i}^{-1} \right)x^{-1}$. 
We shall abbreviate $\bigvee_{i=1}^{n} y_{i}'a_{i}^{-1}$ by $(y_{1}', \ldots, y_{n}')\lambda$.
We may therefore write $f = \bigvee_{x' \in X^{-}} f_{x'}(x')^{-1}$,
where each $f_{x'}$ is either an element of the free monoid or a $\lambda$-expression.  
Since $X^{-}$ is a smaller maximal prefix code, this process may continue.
In this way, every element of $\mathsf{Tot}(H_{n})$ of the form $f^{X}_{Y}$, where $X$ is not the trivial maximal prefix code, may be written as a $\lambda$-expression.
We now need to deal with elements of the form $y\varepsilon^{-1}$.
If $y$ is non-empty, then this function can be written $\alpha_{y}$.
If $y$ is the empty string, then $\varepsilon \varepsilon^{-1} = 1$, the identity function.
What results I shall call a {\em ($\lambda,\alpha$)-expression}.
What we have proved is the following.
Every term in $\mathcal{C}[x]$ can be interpreted as a specific term in the Cantor algebra $\mathsf{Tot}(H_{n})$.
Replace the $x$ by $1$.
This produces an element of $\mathsf{Tot}(H_{n})$ and every element of $\mathsf{Tot}(H_{n})$
can be written in this way.
Thus every element of $\mathsf{Tot}(H_{n})$ can be written as a 
($\lambda,\alpha$)-expression.

2. {\em The structure
$$(\mathsf{Tot}(H_{n}), \alpha_{1}, \ldots, \alpha_{n}, \lambda)$$
is the free Cantor algebra on one generator.}
From universal algebra, we know that the free Cantor algebra on one generator $x$ exists.
We denote it by $\mathscr{C}$.
This admits a surjective (by what we proved above) homomorphism onto  
$$(\mathsf{Tot}(H_{n}), \alpha_{1}, \ldots, \alpha_{n}, \lambda)$$
when we map the single generator of $\mathscr{C}$ to $1$.
It remains to show that this homomorphism is injective.
A typical element of $\mathscr{C}$ can be represented by means of an $n$-ary tree (as above).
The homomorphism can then be described by its effect on that tree.
The $x$ is mapped to $1$ and the abstract operations of $\mathscr{C}$ are mapped
to the concrete Cantor algebra operations of $\mathsf{Tot}(H_{n})$. 
In this way, an element of $\mathscr{C}$ is mapped to an element of $\mathsf{Tot}(H_{n})$
represented as a ($\lambda,\alpha$)-expression.
This means that it can be presented in the form $f^{X}_{Y}$, where the maximal prefix code $X$
is obtained directly from the pattern of $\lambda$s in the ($\lambda,\alpha$)-expression.
If one element of $\mathscr{C}$ is mapped to $f^{X}_{Y}$ 
and another element of $\mathscr{C}$ is mapped to $f^{X'}_{Y'}$,
we now have to investigate what happens when $f^{X}_{Y} = f^{X'}_{Y'}$.
It is here that we again use the theory of maximal prefix codes.
If $f^{X}_{Y} = f^{X'}_{Y'}$, where $X$ and $X'$ are maximal prefix codes,
then $X'$ can be obtained from $X$ by a sequence of caret expansions and reductions.
It is enough to consider what happens when we apply a caret expansion.
Suppose that $f = f^{X}_{Y}$ where $X = \{x_{1}, \ldots, x_{m}\}$ and $Y = \{y_{1}, \ldots, y_{m}\}$.
Then $f = \bigvee_{i=1}^{m}y_{i}x_{i}^{-1}$.
Relabelling, if necessary, we may assume that $x_{1}$ is the element to which caret expansion is applied.
Define $Y'$ to consist of the sequence $y_{1}a_{1}, \ldots, y_{1}a_{n}$ followed by $y_{2}, \ldots, y_{m}$.
Then 
$$f = f^{X^{+}}_{Y'} = y_{1}a_{1}(x_{1}a_{1})^{-1} \vee \ldots \vee y_{1}a_{n}(x_{1}a_{n}^{-1}) \vee \left( \bigvee_{i=2}^{m} y_{i}x_{i}^{-1} \right).$$
It follows that $f = (y_{1}\alpha_{1}, \ldots, y_{1}\alpha_{n})\lambda x_{1}^{-1} \vee \left( \bigvee_{i=2}^{m} y_{i}x_{i}^{-1} \right)$.
We have replaced $y_{1}$ by $(y_{1}\alpha_{1}, \ldots, y_{1}\alpha_{n})\lambda$.
This is an application of the law (CA1).
It follows that if two elements of $\mathscr{C}$ are mapped to the same element of $\mathsf{Tot}(H_{n})$
then one element can be converted into the other by applying the laws (CA1) and (CA2).
This means the two elements are equal in $\mathscr{C}$.
We have thus proved that 
$$(\mathsf{Tot}(H_{n}), \alpha_{1}, \ldots, \alpha_{n}, \lambda)$$
is the free Cantor algebra on one generator.
It is, therefore, in some sense, canonical.

3. {\em We now describe the endomorphisms of this structure.}
If $f \in \mathsf{Tot}(H_{n})$ then we may assume that $f = f^{X}_{Y}$
where $X$ is uniform of length $l$.
We may write
$X = X_{1}a_{1} \cup \ldots \cup X_{1}a_{n}$
where $X_{1}$ is uniform of length $l-1$.
Partition $Y$ into sets $Y_{1},\ldots, Y_{n}$
so that $X_{1}a_{i}$ corresponds to $Y_{i}$.
We may then write $f^{X}_{Y} = \bigvee_{i=1}^{n} f^{X_{1}}_{Y_{i}}a_{i}^{-1}$.
Thus $f^{X}_{Y} = (f^{X_{1}}_{Y_{1}}, \ldots, f^{X_{1}}_{Y_{n}})\lambda$.
If $\theta$ is an endomorphism of the Cantor algebra then
$\theta (f^{X}_{Y}) = (\theta (f^{X_{1}}_{Y_{1}}), \ldots, \theta (f^{X_{1}}_{Y_{n}}))\lambda$.
By induction, we therefore have to compute $\theta (y\varepsilon^{-1})$.
We need to calculate $\theta (ay)$ where $a \in A_{n}$ and $y$ is any finite string.
Observe that $\lambda_{a} \lambda_{y} = (1)\alpha_{a} \alpha_{y}$.
It follows that $\theta (ay) = (\theta (1))\alpha_{a}\alpha_{y} = \theta (a)\alpha_{y}$.
Thus $\theta (ay) = \theta (a)y$.
We have proved that if  $\theta$ is an endomorphism then $\theta (f) = \bigvee \theta (y_{i})x_{i}^{-1}$.
In particular, $\theta (1) = \theta (a_{1})a_{1}^{-1} \vee \ldots \vee \theta (a_{n})a_{n}^{-1}$.
It is now routine to check that $\theta (f^{X}_{Y})$ is the same as $\theta (1)f^{X}_{Y}$.
This proves that the effect of the Cantor algebra endomorphism  $\theta$ is left multiplication by the element $\theta (1)$.

4. {\em If the endomorphism $\theta$ is actually an automorphism, then it can be shown that $\theta (1)$ is invertible.}
This now shows that the group of automorphisms of the Cantor algebra $(\mathsf{Tot}(H_{n}), \alpha_{1}, \ldots, \alpha_{n}, \lambda)$
is isomorphic to the group of units of $H_{n}$, which is the Thompson group $G_{n,1}$.
See \cite{Lawson2021b} for details.
This example explains why the fact that $\mathsf{Etale}(C_{n}) = H_{n}$
should be important in group theory.}
\end{remark}

\begin{remark}{\em  Here is a broader description of what we have accomplished.
Let $P_{n}$ be the polycyclic monoid on $n$ generators \cite{Lawson1998}.
Then there is an injective homomorphism of semigroups
$P_{n} \rightarrow \mathsf{I}^{\tiny cl}(A_{n}^{\omega})$ whose isomorphic image consists of the basic functions and zero.
This is an example of what we called a {\em strong representation} of the polycyclic monoid \cite{Lawson2009}.
This was our original approach to constructing the Thompson groups.
See \cite{Lawson2021b} for a retrospective.
A parallel, but more general, approach was pioneered by \cite{Hughes}.
See, for example, \cite[Example 4.1]{Hughes}.
Observe that Hughes works from geometry whereas we work from language theory.
The connection with the theory of inverse semigroups is slightly obscured by the approach Hughes adopts,
but his \cite[Definition 3.1]{Hughes} is really the definition of a particular kind of inverse semigroup
of which the polycyclic inverse monoids are special cases.
Hughes is working with ultrametric spaces in which closed balls are also open
and if two balls intersect then one must be contained in the other.
This parallels what happens in free monoids in that if two finite strings 
are comparable then one must be the prefix of the other.}
\end{remark}


\end{document}